\newtheorem{theorem}{Theorem}[section]
\newtheorem{prop}[theorem]{Proposition}
\newtheorem{lemma}[theorem]{Lemma}
\newtheorem{prop-def}{Proposition-Definition}[section]
\newtheorem{coro-def}{Corollary-Definition}[section]
\newtheorem{question}[theorem]{Question}
\theoremstyle{definition}
\newtheorem{defn}[theorem]{Definition}
\newtheorem{remark}[theorem]{Remark}
\newtheorem{exam}[theorem]{Example}
\newcommand{\nc}{\newcommand}
\nc{\tred}[1]{\textcolor{red}{#1}}
\nc{\tblue}[1]{\textcolor{blue}{#1}}
\nc{\tgreen}[1]{\textcolor{green}{#1}}
\nc{\tpurple}[1]{\textcolor{purple}{#1}}
\nc{\btred}[1]{\textcolor{red}{\bf #1}}
\nc{\btblue}[1]{\textcolor{blue}{\bf #1}}
\nc{\btgreen}[1]{\textcolor{green}{\bf #1}}
\nc{\btpurple}[1]{\textcolor{purple}{\bf #1}}
\nc{\frakE}{\mathfrak{E}}
\nc{\NN}{{\mathbb N}}
\nc{\ncsha}{{\mbox{\cyr X}^{\mathrm NC}}} \nc{\ncshao}{{\mbox{\cyr
			X}^{\mathrm NC}_0}}
\newcommand{\delete}[1]{}
	\nc{\mlabel}[1]{\label{#1}}
	\nc{\mcite}[1]{\cite{#1}}
	\nc{\mref}[1]{\ref{#1}}
	\nc{\meqref}[1]{\eqref{#1}}
	\nc{\mbibitem}[1]{\bibitem{#1}}
	\nc{\mlabel}[1]{\label{#1}{\hfill \hspace{1cm}{\bf{{\ }\hfill(#1)}}}}
	\nc{\mcite}[1]{\cite{#1}{{\bf{{\ }(#1)}}}}
	\nc{\mref}[1]{\ref{#1}{{\bf{{\ }(#1)}}}}
	\nc{\meqref}[1]{\eqref{#1}{{\bf{{\ }(#1)}}}}
	\nc{\mbibitem}[1]{\bibitem[\bf #1]{#1}}
\nc{\sha}{{\mbox{\cyr X}}}  
\newfont{\scyr}{wncyr10 scaled 550}
\nc{\ssha}{\mbox{\bf \scyr X}}
\nc{\shap}{{\mbox{\cyrs X}}} 
\nc{\shpr}{\diamond}    
\nc{\shp}{\ast} \nc{\shplus}{\shpr^+}
\nc{\shprc}{\shpr_c}    
\nc{\dep}{\mrm{dep}} \nc{\lc}{\lfloor} \nc{\rc}{\rfloor}
\nc{\db}{\leq_{\rm db}} \nc{\bfk}{\bf k}
\nc{\cala}{{\mathcal A}} \nc{\calb}{{\mathcal B}}
\nc{\calc}{{\mathcal C}}
\nc{\cald}{{\mathcal D}} \nc{\cale}{{\mathcal E}}
\nc{\calf}{{\mathcal F}} \nc{\calg}{{\mathcal G}}
\nc{\calh}{{\mathcal H}} \nc{\cali}{{\mathcal I}}
\nc{\call}{{\mathcal L}} \nc{\calm}{{\mathcal M}}
\nc{\caln}{{\mathcal N}} \nc{\calo}{{\mathcal O}}
\nc{\calp}{{\mathcal P}} \nc{\calr}{{\mathcal R}}
\nc{\cals}{{\mathcal S}} \nc{\calt}{{\mathcal T}}
\nc{\calu}{{\mathcal U}} \nc{\calw}{{\mathcal W}} \nc{\calk}{{\mathcal K}}
\nc{\calx}{{\mathcal X}} \nc{\CA}{\mathcal{A}}
\nc{\LK}{\mathfrak{L}}
\nc{\RE}{\Gamma}
\nc{\re}{\gamma}
\nc{\fraka}{{\mathfrak a}} \nc{\frakA}{{\mathfrak A}}
\nc{\frakb}{{\mathfrak b}} \nc{\frakB}{{\mathfrak B}}
\nc{\frakc}{{\mathfrak c}}
\nc{\frakD}{{\mathfrak D}} \nc{\frakF}{\mathfrak{F}}
\nc{\frakf}{{\mathfrak f}} \nc{\frakg}{{\mathfrak g}}
\nc{\frakH}{{\mathfrak H}} \nc{\frakL}{{\mathfrak L}}
\nc{\frakM}{{\mathfrak M}} \nc{\bfrakM}{\overline{\frakM}}
\nc{\frakm}{{\mathfrak m}} \nc{\frakP}{{\mathfrak P}}
\nc{\frakN}{{\mathfrak N}} \nc{\frakp}{{\mathfrak p}}
\nc{\frakS}{{\mathfrak S}} \nc{\frakT}{\mathfrak{T}}
\nc{\frakX}{{\mathfrak X}}
\nc{\frakZ}{\mathfrak{Z}}
\nc{\frakJ}{\mathfrak{J}}
\nc{\mfrakL}{\Phi}
\nc{\mfrakH}{\Psi}
\nc{\MfrakL}{\phi}
\nc{\MfrakH}{\psi}
\nc{\frakR}{\mathfrak{R}}
\nc{\GL}{\mathrm{GL}}
\nc{\gl}{\mathfrak{gl}}
\nc{\frakh}{\mathfrak{h}}
\nc{\xsj}{\vartriangleright}
\font\cyr=wncyr10 \font\cyrs=wncyr7
\nc{\li}[1]{\textcolor{red}{#1}}
\nc{\lir}[1]{\textcolor{red}{Li:#1}}
\nc{\zong}[1]{\textcolor{blue}{Zong: #1}}
\nc{\xing}[1]{\textcolor{blue}{Xing:#1}}
\nc{\revise}[1]{\textcolor{red}{#1}}
\nc{\yi}[1]{\textcolor{green}{Yi:#1}}
\nc{\dd}{{\rm d}} \nc{\Ad}{{\rm AD}}
\nc{\CC}{\mathbb{C}} \nc{\PP}{\mathbb{P}}
\nc{\QQ}{\mathbb{Q}} \nc{\ZZ}{\mathbb{Z}}
\nc{\ZZZ}{\mathbb{Z}^\ast} \nc{\RR}{\mathbb{R}} \nc{\id}{{\rm id}}
\nc{\AD}{{\rm AD}} \nc{\aad}{{\rm Ad}} \nc{\pown}{P_n} \nc{\powm}{P_m}
\nc{\Aut}{{\rm Aut}}
\nc{\Der}{{\rm Der}}
\nc{\ad}{{\rm Ad}} \nc{\fni}{\frac{1}{n}} \nc{\fmi}{\frac{1}{m}}
\nc{\ada}{\rm ad} \nc{\mulz}{\cdot_0}
\nc{\complim}{synchronized\xspace}
\nc{\compgroup}{synchronized\xspace}
\nc{\limwtzero}{limit-weight zero\xspace}
\nc{\compint}{synchronized integrable\xspace}
\nc{\no}{\precsim}
\nc{\No}{\prec}
\nc{\newc}[1]{{\bf #1}}
\begin{document}

\title[Rota-Baxter operators, differential operator, pre- and Novikov structures]{Rota-Baxter operators, differential operators, pre- and Novikov structures on groups and Lie algebras
}
%
\author{Xing Gao}
\address{School of Mathematics and Statistics, Lanzhou University
	Lanzhou, 730000, China;
	Gansu Provincial Research Center for Basic Disciplines of Mathematics
	and Statistics
	Lanzhou, 730070, China
}
\email{gaoxing@lzu.edu.cn}

\author{Li Guo}
\address{
	Department of Mathematics and Computer Science,
	Rutgers University,
	Newark, NJ 07102, United States}
\email{liguo@rutgers.edu}

\author{Zongjian Han}
\address{School of Mathematical Sciences, Tonji University, Shanghai, 200092, P.\,R. China}
\email{dblnhzj@163.com}

\author{Yi Zhang}
\address{School of Mathematics and Statistics; Center for Applied Mathematics of Jiangsu Province/Jiangsu
International Joint Laboratory on System Modeling and Data Analysis, NUIST, Nanjing, Jiangsu, 210044, P.\,R. China}
\email{zhangy2016@nuist.edu.cn}

\date{\today}
\begin{abstract}
Rota-Baxter operators on various structures have found important applications in diverse areas, from renormalization of quantum field theory to Yang-Baxter equations. Relative Rota-Baxter operators on Lie algebras are closely related to pre-Lie algebras and post-Lie algebras. Some of their group counterparts have been introduced to study post-groups, skew left braces and set-theoretic solutions of Yang-Baxter equations, but searching suitable notions of relative Rota-Baxter operators on groups with weight zero and pre-groups has been challenging and has been the focus of recent studies, by provisionally imposing an abelian condition.

Arising from the works of Balinsky-Novikov and Gelfand-Dorfman, Novikov algebras and their constructions from differential commutative algebras have led to broad applications. Finding their suitable counterparts for groups and Lie algebras has also attracted quite much recent attention.

This paper uses one-sided-inverse pairs of maps to give a perturbative approach to a general notion of relative Rota-Baxter operators and differential operators on a group and a Lie algebra with limit-weight. With the extra condition of limit-abelianess on the group or Lie algebra, we give an interpretation of relative Rota-Baxter and differential operators with weight zero. These operators motivate us to define pre-groups and Novikov groups respectively as the induced structures. The tangent maps of these operators are shown to give Rota-Baxter and differential operators with weight zero on Lie algebras. The tangent spaces of the pre-Lie and Novikov Lie groups are pre-Lie algebras and Novikov Lie algebras, fulfilling the expected property. Furthermore, limit-weight relative Rota-Baxter operators on groups give rise to skew left braces and then set-theoretic solutions of the Yang-Baxter equation.
\end{abstract}

\makeatletter
\@namedef{subjclassname@2020}{\textup{2020} Mathematics Subject Classification}
\makeatother
\subjclass[2020]{
22E60, 
17B38, 
17B40, 
16W99, 
45N05 
}

\keywords{Rota-Baxter group; differential group; pre-group; Novikov group; Novikov Lie algebra; brace; Yang-Baxter equation}

\maketitle

\vspace{-1cm}
\tableofcontents

\setcounter{section}{0}

\allowdisplaybreaks

\section{Introduction}
This paper uses a sequence limit of one-sided-inverse pairs of maps as a perturbation device to generalize the notions of relative Rota-Baxter operators and relative differential operators on groups from the existing case of weight $\pm 1$ to the case of limit-weights. A similar device defines a limit-abelianess for the groups on which the limit-weighted operators induce a notion of pre-groups and Novikov groups, which are sought after recently. Relations to relative Rota-Baxter and differential Lie algebras, skew left braces and the quantum Yang-Baxter equation are also discussed.

In this introduction, we begin with the background and key questions for the Rota-Baxter operators in Section~\mref{ss:rbback} and the differential operators in Section~\mref{ss:diffback}. We then sketch our approach to address these questions in Section~\mref{ss:laa} and give a summary of the main results in Section~\mref{ss:sum}

\subsection{Rota-Baxter operators and pre-structures on Lie algebras and groups} \mlabel{ss:rbback}
We first give the motivation and background on Rota-Baxter operators on Lie algebras, their related pre-Lie algebras and the classical Yang-Baxter equation, followed by recent progresses for these structures on groups, leading to the questions to be addressed.

\subsubsection{Rota-Baxter Lie algebras, classical Yang-Baxter equation and pre-Lie algebras}
For a vector space $R$ equipped with a binary operation $*$ and a fixed scalar $\lambda$, a linear operator $P:R\to R$ is called a {\bf Rota-Baxter operator on $(R,*)$ with weight $\lambda$} if
\begin{equation*}
	P(x)*P(y)=P\big(x*P(y)\big)+P\big(P(x)*y\big)+\lambda P(x*y), \ \quad x,y \in R.
	\mlabel{eq:rb}
\end{equation*}
When $*$ is the associative product, the notion originated from the 1960 work of G.~Baxter~\mcite{Ba}, and its affine transformation $Q:=-\lambda \id -2P$ can be traced even further to the 1951 work of Tricomi~\mcite{Tri}. It has attracted a lot of attention in recent years with broad applications~(see for example~\mcite{CK,Gub,Ro}).

For the Lie algebra, both the Rota-Baxter operator and its affine transformation were independently discovered by Semenov-Tian-Shansky in~\mcite{STS} as the {\bf operator form} of the important {\bf classical Yang-Baxter equation (CYBE)}~\mcite{BD}, first given in the tensor form
\begin{equation}\notag
	[r_{12},r_{13}]+[r_{12},r_{23}]+[r_{13},r_{23}]=0,
	\mlabel{eq:cybe}
\end{equation}
where $r\in \frakg\otimes \frakg$ and $\frakg$ is a Lie algebra.
The equation arose from the study of
inverse scattering theory in the 1980s and then was recognized as the
``semi-classical limit" of the quantum Yang-Baxter equation following the works of C.~N. Yang~\mcite{Ya} and R.~J. Baxter~\mcite{BaR}.
CYBE is further related to classical integrable
systems and quantum groups~\mcite{CP}. The operator forms of the CYBE serve the purposes of both understanding the CYBE and of generalizing the equation. In fact, the affine transformation was called the {\bf modified Yang-Baxter equation} which does not have a tensor form like the CYBE.

Further generalizing the operator approach to the CYBE, Kupershmidt~\mcite{Ku} introduced the notion of {\bf $O$-operators}, later often called {\bf relative Rota-Baxter operators}. Every antisymmetric solution of the CYBE gives a relative Rota-Baxter operator with weight zero. In the other direction, the work~\mcite{Bai2} of Bai provided an inverse procedure that produces an antisymmetric solution of the CYBE from every relative Rota-Baxter operator with weight zero.
Furthermore, a relative Rota-Baxter operator with weight zero or one is closely related to a pre-Lie or post-Lie algebra respectively.
These relations can be summarized in the following diagram.
\vspace{-.1cm}
\begin{equation}
	\begin{split}
		\xymatrix{
			\txt{pre-Lie (resp. post-Lie) \\ algebras} \ar@<.4ex>[r]      & \txt{relative Rota-Baxter operators with \\ weight zero (resp. one) on Lie algebras} \ar@<.4ex>[l]
			\ar@<.4ex>[r]&
			\txt{solutions of \\ CYBE} \ar@<.4ex>[l]
		}
	\end{split}
	\mlabel{eq:bigdiag}
\end{equation}
Here the arrows for CYBE are for operators with weight zero.

\subsubsection{Rota-Baxter groups, braces, the quantum Yang-Baxter equation and post-groups}

Understanding the notions and relations in Diagram~\meqref{eq:bigdiag} on the group level has been the motivation of long-time studies.
Back in the 1980s, as a fundamental lemma to applications in integrable systems~\mcite{FRS,RS1,RS2}, Semenov-Tian-Shansky obtained a
Global Factorization Theorem for a Lie group from integrating an Infinitesimal Factorization Theorem for a Lie algebra, making use of the modified Yang-Baxter equation (equivalently, a Rota-Baxter operator with weight one).

To obtain the Global Factorization Theorem directly on the Lie group level, the notion of Rota-Baxter operators on groups with weight one was introduced in~\mcite{GLS}, leading to many developments. The tangent maps of these operators are Rota-Baxter operators on the Lie algebras with weight one.
More generally, a relative Rota-Baxter operator with weight one was defined in~\mcite{JSZ}.
In the context of Diagram~\meqref{eq:bigdiag}, on the one hand, Rota-Baxter operators on groups give skew left braces~\mcite{BG} which give set-theoretic solutions of the quantum Yang-Baxter equation~\mcite{GV}; on the other hand, the operators are also closely related to the newly introduced notion of post-groups~\mcite{BGST}. These notions and connections are shown in the following diagram.
\vspace{-.2cm}
\begin{equation}
	\mlabel{eq:bigdiag2}
	\begin{split}
		\xymatrix{
			\txt{post-Lie \\ groups} \ar@<.4ex>[r]      & \txt{relative Rota-Baxter operators \\ with weight one on Lie groups}\ar@<.4ex>[l] \ar@<.4ex>[r]&
			\txt{solutions of \\ YBE}
		}
	\end{split}
\vspace{-.5cm}
\end{equation}
\vspace{-.9cm}
\subsubsection{In search of Rota-Baxter groups with weight zero and pre-groups}
An apparent discrepancy can be noticed at this point between the diagrams in~\meqref{eq:bigdiag} and \meqref{eq:bigdiag2}: the relative Rota-Baxter operators on Lie algebras in Diagram~\meqref{eq:bigdiag} have weights zero and one while those operators on groups in the above-mentioned recent progresses are only defined for weight one. Correspondingly, the group counterpart of pre-Lie algebras is missing in Diagram~\meqref{eq:bigdiag}.
This situation can be summarized in the following question.
\vspace{-.2cm}
\begin{question}
\mlabel{qu:rbopr}
Find suitable notions of a Rota-Baxter operator on a group with weight zero and of a pre-Lie group, such that the following expected relations hold.
\vspace{-.2cm}
\begin{equation*}
\mlabel{eq:rbpre}
\begin{split}		
\xymatrix{
\text{{\small \newc{pre-Lie groups} }}  \ar[d]_{\rm tangent}^{\rm space} &&& \ar[d]_{\rm tangent}^{\rm space}  \text{{\small \newc{weight zero Rota-Baxter groups} }}
\ar[lll] \\
			 \text{{\rm {\small  pre-Lie  algebras} } } &&&
			 \text{{\rm {\small   weight zero Rota-Baxter Lie algebras }} }
\ar[lll]
}
\end{split}
\end{equation*}
\vspace{-.4cm}
\end{question}
\vspace{-.2cm}
Addressing this question has been the focus of several recent studies~\mcite{BGST,BN,GGLZ,LST1,LST2}. Motivated by the fact that a Rota-Baxter operator with weight one on an abelian Lie algebra is automatically a Rota-Baxter operator with weight zero, an abelian condition on a group is usually imposed as the definition of a Rota-Baxter operator with weight zero on a group. Likewise, a post-group is called a pre-group if the underlying Lie group is abelian.
Improving this restricted condition is hindered by the rigidity of the group structure compared to algebras with linear structures such as the Lie algebra.

The first goal of this paper is to address this question based on the more general notion of limit-abelianess of groups and Lie algebras, sketched in Section~\mref{ss:laa}.
\vspace{-.3cm}
\subsection{Differential operators and Novikov structures on Lie algebras and groups} \mlabel{ss:diffback}
The understanding on differential operators on Lie algebras and groups, and their induced structures is even more limited.
\vspace{-.2cm}
\subsubsection{Differential commutative algebras and Novikov algebras}
Having the Rota-Baxter operator with weight $\lambda$ as the formal inverse, a {\bf differential operator with weight $\lambda$} on a vector space $R$ with a binary operation $\ast$ is a linear operator $d:R\to R$ satisfying
\begin{equation*}
	d(x*y)=d(x)*y+x*d(y)+\lambda d(x)*d(y), \quad x, y\in R.
\mlabel{eq:difop}
\end{equation*}
The operator is also called a {\bf derivation} when the weight is zero, in which case its study over a field is the differential algebra initiated by Ritt and Kolchin as an algebraic study of differential equations and has expanded into a vast area of research and applications~\mcite{Ko,PS,Ri}.

In applications, a classical construction of Gelfand states that a derivation $d$ on a commutative associative algebra $R$ gives rise to a (left) Novikov algebra by $a\circ b:=ad(b)$, similar to the fact that a Rota-Baxter operator with weight zero on a Lie algebra defines a pre-Lie algebra.

With its own significance, Novikov algebras emerged from S. Novikov's work in the early 1980s~\mcite{BN85} on Poisson brackets of hydrodynamic type. Their development was closely linked to integrable systems, Poisson geometry, and the broader mathematical framework established by researchers like Gelfand and Dorfman~\mcite{GD}. Recently, the Novikov algebra and its related algebraic structures continue to be a rich area of study, with ongoing research exploring their theoretical properties and practical applications in mathematics and physics~\mcite{GLB,HBG,KMS,KSO,SK}. Also Novikov algebras are a subclass of pre-Lie algebras with its own broad applications.

Beyond Novikov algebras and their construction from derivations on commutative algebras, a notion of noncommutative Novikov algebra was also defined by Sartayev and Kolesnikov~\mcite{SK} and shown to be induced from a derivation on an associative algebra.

\vspace{-.2cm}
\subsubsection{In search of derivations on groups, Novikov Lie algebras and Novikov groups}
It is known that a derivation on a Lie algebra induces a magmatic algebra without the original Lie bracket~\mcite{KSO}. It would be interesting to obtain a Lie version of the Novikov algebra retaining the original Lie bracket.
On the group level, it is not known how to define a derivation on a group or a group variation of the Novikov algebra, even if a differential operator on a group with weight one has its relative version as the crossed homomorphism on a group~\mcite{GLS}.

The overall goal to understand this situation can be summarized in the following question (the boldfaced notions are to be defined).
\vspace{-.3cm}
\begin{question}
\mlabel{qu:diffnov}
Find suitable notions of Novikov Lie algebras and Novikov Lie groups, as well as derivations on Lie groups, such that the expected relations in the following diagram hold.
\vspace{-.3cm}
\begin{equation*}
\mlabel{eq:diffnov}
\begin{split}
\xymatrix{
 \text{{\small \newc{Novikov groups}}}\ar[d]_{\rm tangent}^{\rm space}&&& \text{{\small \newc{weight zero differential Lie groups}}} \ar[lll]_{\rm Gelfand-Dorfman \quad \quad}^{\rm type\quad}\ar[d]_{\rm tangent}^{\rm space} \\
\text{{\small \newc{Novikov Lie algebras}} } &&&
\text{{\small {\rm weight zero differential Lie algebras} }} \ar[lll]_{\rm Gelfand-Dorfman\quad\quad}^{\rm type\quad}
}
\end{split}
\end{equation*}
\vspace{-.4cm}
\end{question}

The second goal of this paper is to address this question based on the notion of limit-abelianess of groups and Lie algebras.
\vspace{-.2cm}
\subsection{The limit-abelian approach to the two problems}
\mlabel{ss:laa}

In a recent paper~\mcite{GGH}, we use the notion of Rota-Baxter operators on groups and Lie algebras with limit-weights to give a new notion of Rota-Baxter operators with weight zero on groups, by a type of deformations of the weight one case. Then the notion is applied to study group-values integrals.

In the present paper, we first generalize these Rota-Baxter operators with limit-weights to the relative context. We then use a one-sided-inverse pair of maps on a group or Lie algebra to define a weak transported structure. From the sequence of such map pairs in a limit-weighted group or a limit-weighted Lie algebra, a limit process of structure transportations of the original multiplication can be obtained. The abelianess of the limit multiplication is called the {\bf limit-abelian property}, and is used to define a pre-Lie group from a limit-weighted post-Lie group. A limit-weighted relative Rota-Baxter operator on a limit-abelian group (resp. Lie algebra) gives a pre-group (resp. pre-Lie algebra).

The differential case is taken care of by a similar approach, except that here there is no counterpart for the post-Lie algebra or the post-group, reflected by the two-dimensional diagram in the differential case in Figure~\mref{fig:diffalggp} (p.~ \pageref{fig:diffalggp}) verses the three-dimensional diagram in the Rota-Baxter case in Figure~\mref{fig:rbalggp} (p.~\pageref{fig:rbalggp}). Also, with our approach, the notion of a Novikov Lie algebra with limit-weight is naturally defined in analog to a post-Lie algebra with limit-weight and then gives a notion of a Novikov Lie algebra (without limit) by adding the original Lie algebra bracket into consideration.

\subsection{The outline} \mlabel{ss:sum}
We now give an outline of the paper with the goals of addressing Questions~\mref{qu:rbopr} and \mref{qu:diffnov}. For Question~\mref{qu:rbopr}, the notions of Rota-Baxter operators on Lie groups and Lie algebras, pre-Lie groups and their relation are depicted in Figure~\mref{fig:rbalggp} in page~\pageref{fig:rbalggp}, where Rota-Baxter is abbreviated as RB. For Question~\mref{qu:diffnov}, the notions of derivations on Lie groups and on Lie algebras, and Novikov Lie groups and Novikov Lie algebras, together with their relations are summarized in Figure~\ref{fig:diffalggp} in page~\pageref{fig:diffalggp}.

\begin{figure}
	\caption{Rota-Baxter operators and pre-structures }
	\mlabel{fig:rbalggp}
{\small
\begin{displaymath}
	\xymatrix@R=0.8cm@C=0.55cm{
		&\txt{\small  \newc{limit-weighted}\\ \small \newc{post-Lie groups}} \ar@{<-}[rr]_{\tiny \rm Proposition~\ref{prop:preRB}}\ar@{->}[ld]|-{\txt{\tiny limit-abelian\\ \tiny Definition~\ref{defn:pregroup}}}\hole \ar@{.>}[dd]|-(0.8){\txt{\tiny tangent space\\ \tiny Theorem~\ref{thm:tangprelie}}}\hole& &\txt{\small \newc{limit-weighted} \\ \small  \newc{relative RB operators} \\ \small \newc{on Lie groups} }\ar@{->}[ld]|-{\txt{\tiny limit-abelian\\ \tiny Definition~\ref{defn:pregroup}}}\hole\ar@{->}[dd]|-{\txt{\tiny tangent space\\ \tiny Theorem~\ref{thm:tgrelim}}}\hole&\\
		\txt{\small \newc{pre-Lie groups}}\ar@{->}[dd]|-{\txt{\tiny tangent space\\ \tiny Theorem~\ref{thm:tangprelie}}}\hole&&\txt{\small \newc{limit-weighted}\\
			\small \newc{relative RB operators on}\\ \small \newc{limit-abelian Lie groups}}\ar@{->}[dd]|-(0.2){\txt{\tiny tangent space\\ \tiny Remark~\ref{rk:tgab}}}\hole \ar@{->}[ll]^(0.37){\tiny\rm Proposition~\ref{prop:preRB}\quad \quad }\\
		&\txt{\small \newc{limit-weighted}\\ \small \newc{post-Lie algebras}}\ar@{.>}[dd]|-(0.75){ \txt{\tiny $\MfrakL_{\frac{1}{n}}=\MfrakH_{\frac{1}{n}}:= \id$ \\ \tiny Remark~\ref{rk:post-pre}}} & &\txt{\small \newc{limit-weighted} \\ \small \newc{relative RB operators} \\ \small \newc{on Lie algebras} }\ar@{.>}[ll]^(0.7){\rm Proposition~\ref{prop:rbpre}}\ar@{->}[dd]|-{\txt{\tiny $\MfrakL_{\frac{1}{n}}=\MfrakH_{\frac{1}{n}}:= \id$ \\ \tiny Example~\ref{rk:relrblie}~\eqref{rk:re weight lambda}}}\hole\\
		\txt{\small pre-Lie algebras}\ar@{=}[dd]\ar@{<.}[ru]|-{\txt{\tiny limit-abelian \\ \tiny Remark~\ref{rk:post-pre}}}\hole &&\txt{\small \newc{limit-weighted}\\
			\small \newc{relative RB operators on}\\ \small \newc{limit-abelian Lie algebras}}\ar@{->}[ll]^(0.39){\tiny \rm Proposition~\ref{prop:rbpre}}\ar@{<-}[ru]\ar@{->}[dd]|-(0.2){ \txt{\tiny $\MfrakL_{\frac{1}{n}}:=\frac{1}{n} \id$ , $\MfrakH_{\frac{1}{n}}:=n\, \id$ \\ \tiny Example~\ref{rk:relrblie}~\eqref{rk:re weight lambda}}}\hole\\
		&\txt{\small post-Lie algebras}\ar@{<.}[rr]\ar@{.>}[ld]|{\txt{\tiny abelian } }\hole&&\txt{\small weight one  \\ \small relative RB operators \\ \small on Lie algebras}\\
		\txt{ \small pre-Lie algebras}\ar@{<-}[rr]&&\txt{\small weight zero \\ \small relative RB operators\\ \small on Lie algebras}\ar@{<-}[ru]|-{\txt{\tiny weighted term vanish}}\hole&
	}
\end{displaymath}
}
\noindent
{\small Note: In the diagram, the boldfaced terms are newly introduced. The classical results are summarized at the bottom level. The generalizations for Lie algebras are at the middle level, and the lifted results for Lie groups are at the top level.}
\end{figure}
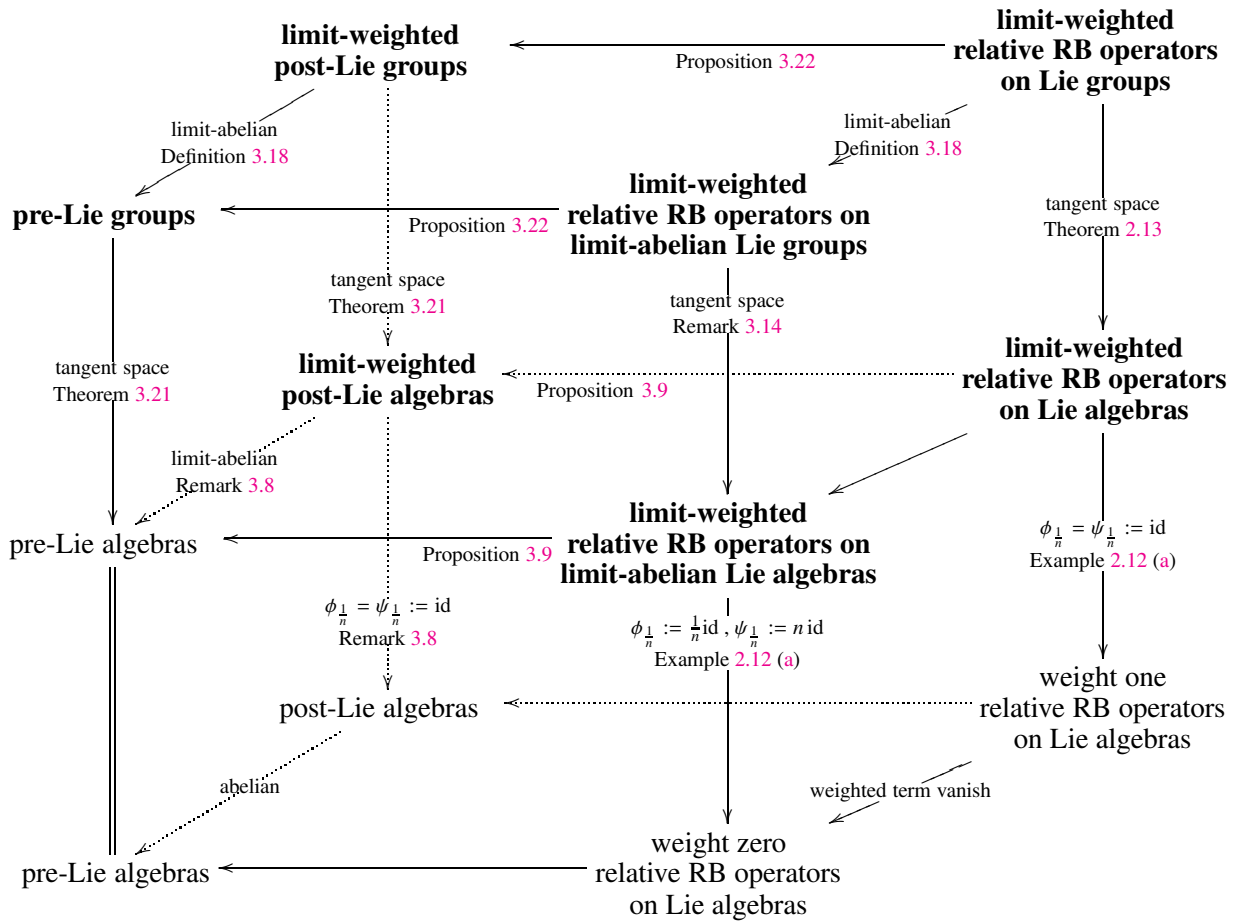

\subsubsection{Relative Rota-Baxter groups with limit-weights}

In Section~\mref{sec:pairlimit}, after reviewing notions on \complim groups and Lie algebras, we define relative Rota-Baxter operators with limit-weights on topological groups and Lie algebras, with the Lie algebras serving as the tangent spaces of the Lie groups. Their descent groups are also obtained. We then define an integration for functions with group values, as a relative variation of the integration defined in~\mcite{GGH}.

\subsubsection{Limit-abelian relative Rota-Baxter groups with limit-weight and pre-Lie groups}
In Section~\mref{sec:ybe} we define a new notion of pre-groups, to be post-groups satisfying certain limit-abelian condition (Definition~\mref{defn:pregroup}). It fulfills the expected properties that its tangent map gives a pre-Lie algebra (Theorem~\mref{thm:tangprelie}), and that it is derived from a limit-weighted relative Rota-Baxter operator on a limit-abelian group (Proposition~\mref{prop:preRB}). We introduce post-semigroups and use them to define limit-weighted post-groups. Then we define pre-groups based on the limit-abelian condition. Without the limit-abelian condition, a relative Rota-Baxter operator on a Lie algebra of limit-weight gives rise to a generalization of the post-Lie algebra to the limit-weight case (Proposition~\mref{prop:rbpre}) which also yields a pre-Lie algebra under the limit-abelian condition (Remark~\mref{rk:post-pre}). Thus relative Rota-Baxter operators with limit-weights provide a general framework not only for post-groups and pre-groups, but also for their classical linear counterparts, the post-Lie algebras.

We finally show that limit-weighted Rota-Baxter groups provide a rich source to produce skew left braces and hence set-theoretic solutions of the Yang-Baxter equation (Proposition~\mref{pp:slotoybe2}).

\subsubsection{Relative differential operators on groups, Novikov groups and Novikov Lie algebras}

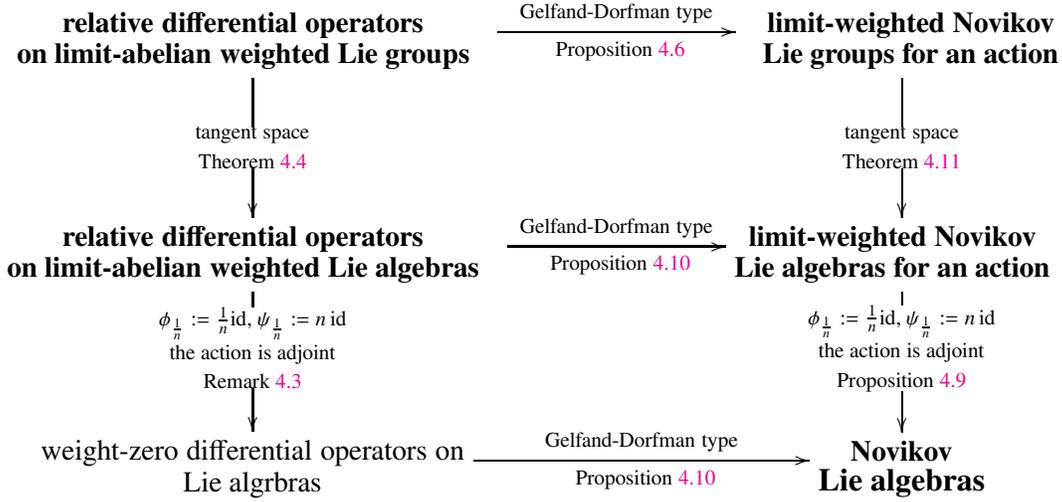
\begin{figure}
\caption{Differential operators and Novikov structures}
\mlabel{fig:diffalggp}
\begin{displaymath}
\xymatrix@R=0.8cm@C=0.8cm{
			\txt{\small \newc{relative differential operators} \\ \small \newc{on limit-abelian weighted Lie groups}} \ar[rrr]^-(0.5){\txt{\tiny Gelfand-Dorfman type}}_-(0.5){\txt{ \tiny Proposition~\ref{pp:novidf}}} \ar@{->}[dd]|-(0.5){\txt{\tiny tangent space\\ \tiny Theorem~\ref{thm:dgpdla}}}\hole&&& \ar@{->}[dd]|-(0.5){\txt{\tiny tangent space\\ \tiny Theorem~\ref{thm:ngpnla}}}\hole\txt{\small \newc{limit-weighted Novikov} \\ \small \newc{Lie groups for an action} } \\
			&&& \\
			\txt{\small \newc{relative differential operators} \\ \small \newc{on limit-abelian weighted Lie algebras}} \ar@{->}[dd]|-(0.4){ \txt{\tiny $\MfrakL_{\frac{1}{n}}:=\frac{1}{n} \id$, $\MfrakH_{\frac{1}{n}}:=n\, \id$ \\ \tiny  the action is adjoint \\ \tiny Remark~\ref{rk:redf}}}\hole \ar[rrr]^-(0.5){\txt{\tiny Gelfand-Dorfman type}}_-(0.5){\txt{ \tiny Proposition~\ref{pp:dernov}}} &&&
			\txt{\small \newc{limit-weighted Novikov} \\ \small \newc{Lie algebras for an action}} \ar@{->}[dd]|-(0.4){ \txt{\tiny $\MfrakL_{\frac{1}{n}}:=\frac{1}{n} \id$, $\MfrakH_{\frac{1}{n}}:=n\, \id$ \\ \tiny  the action is adjoint \\ \tiny Proposition~\ref{pp:dervex}}}\hole
			\\
			&&& \\
			\txt{\small weight-zero differential operators on \\ \small Lie algrbras} \ar[rrr]^-(0.5){\txt{\tiny Gelfand-Dorfman type}}_-(0.5){\txt{ \tiny Proposition~\ref{pp:dernov}}} &&&
			\txt{\small \newc{Novikov} \\ \newc{Lie algebras}}
		}
	\end{displaymath}
	Note: The boldfaced terms are newly introduced.
\end{figure}

Section~\mref{sec:nov} addresses Question~\mref{qu:diffnov} with the new notions and relations summarized in Figure~\ref{fig:diffalggp} in page~\pageref{fig:diffalggp}.
We first introduce relative differential operators on groups and Lie algebras with limit-weights, with tangent maps of the former giving the latter (Theorem~\mref{thm:dgpdla}). We then give the notion of Novikov groups with limit-weight and in particular Novikov groups for $\QQ$-groups. The Galfand-Dorfman construction of Novikov algebras from derivations on commutative algebras has its group analog that a relative differential operator on a Lie group with limit-weight gives rise to a Novikov group with limit-weight. Conversely, a Novikov group with limit-weight also gives a relative differential operator with limit-weight (Proposition~\mref{pp:novidf}). Finally, the notions of a Novikov Lie algebra with limit-weight and in particular a Novikov Lie algebra are introduced, which serves as the tangent space of a Novikov Lie group with limit-weight on the one hand (Theorem~\mref{thm:ngpnla}), and as the induced structure of a relative differential operator on a Lie algebra on the other hand (Proposition~\mref{pp:dernov}).

\smallskip

\noindent
{\bf Notations.} Throughout this paper, we will work over a fixed field $\bfk$ of characteristic zero. It is the base field for all vector spaces, algebras,  tensor products, as well as linear maps.
Denote by $\mathbb{P}$ the set of positive ones. Let $e$ be the identity of a group.

\vspace{-.3cm}

\section{Limit-weighted relative Rota-Baxter groups and Lie algebras}
\mlabel{sec:pairlimit}
In this section, we introduce the notion of a relative Rota-Baxter operator on a group with weight given by an infinite sequence of map pairs. This notion generalizes the one in~\mcite{GGH} to the relative context.

\subsection{Action-\complim groups and Lie algebras}
\mlabel{ss:actionsynch}

We first recall the following basic notions on map pairs.

\begin{defn}~\cite{GGH}
\mlabel{defn:pair}
Let $G$ be a group and $\mfrakL, \mfrakH:G\rightarrow G$ a pair of maps.
\begin{enumerate}
\item     We call $(G,(\mfrakL,\mfrakH))$, or simply $G$,  a {\bf $(\mfrakL,\mfrakH)$-group} if $\mfrakL \mfrakH= \id_G$.
\item We call the pair $(\mfrakL,\mfrakH)$ {\bf bijective} if $\mfrakL$ is bijective with $\mfrakL^{-1}=\mfrakH$.
\item We call the pair $(\mfrakL,\mfrakH)$ {\bf unital} if $\mfrakL(e)=\mfrakH(e)=e$.
\item
 Let $\frakg$ be a Lie algebra and $\MfrakL,\MfrakH, B:\frakg \rightarrow \frakg$ be linear maps.
We call $(\frakg,(\MfrakL,\MfrakH))$ a {\bf $(\MfrakL,\MfrakH)$-Lie algebra} if $\MfrakL \MfrakH= \id_\frakg$.
\end{enumerate}
\end{defn}

We will also need the following condition for an iterated limit to be obtained by taking the limit in the diagonal direction.

\begin{defn}~\cite{GGH}
\mlabel{defn:synchronized-limit}
Let $T$ be a topological space and $f_n: T\times T\rightarrow T, n\geq 1,$ be a sequence of maps such that $\lim\limits_{n\to \infty} f_n(a,b)$ uniquely exists for each $(a,b)\in T\times T$. We call the resulting limit function $\lim\limits_{n\to\infty}f_n:T\times T\to T$ {\bf \complim}
if, for any sequences $ a_{n},b_{n}, n\geq 1$ in $T$ with $\lim\limits_{n\to \infty}a_{n}= a$ and $\lim\limits_{n\to\infty} b_{n}=b$, we have
\begin{equation}
\lim_{n\to\infty}f_n(a,b)(=\lim_{n\to\infty}f_n(\lim_{m\to \infty} a_m,\lim_{k\to \infty}b_k))=\lim_{n\to\infty}f_n(a_n,b_n).
\mlabel{eq:jx}
\end{equation}
\end{defn}

\begin{exam}~\cite{GGH}
Let $V$ be a linear metric space and $f_{n}:V\times V\rightarrow V, n\geq 1,$ be maps. If $f_n$ uniformly converges to a continuous function $f$, then $f$ is \complim.
\end{exam}

\begin{defn}\cite{GGH}
\begin{enumerate}
\item Let $(G,\cdot)$ be a topological group, and $\mfrakL_{\frac{1}{n}},\mfrakH_{\frac{1}{n}},\frakB:G\rightarrow G, n\geq 1$, be maps.
 We call $\big(G,(\mfrakL_{\frac{1}{n}},\mfrakH_{\frac{1}{n}})\big)$  a  {\bf $\lim\limits_{n\to \infty}(\mfrakL_{\frac{1}{n}},\mfrakH_{\frac{1}{n}})$-group}
if
\begin{enumerate}
\item $G$ is a $(\mfrakL_{\frac{1}{n}}, \mfrakH_{\frac{1}{n}})$-group for each $n\geq 1$;
\item the limit $\lim\limits_{n\to\infty}\mfrakH_{\frac{1}{n}}\Big(\mfrakL_{\frac{1}{n}}(a)
\mfrakL_{\frac{1}{n}}(b)\Big)$ uniquely exists for each $(a,b)\in G\times G$; and \item the map $$\lim\limits_{n\to\infty}\mfrakH_{\frac{1}{n}}\Big(\mfrakL_{\frac{1}{n}}\cdot
\mfrakL_{\frac{1}{n}}\Big): G\times G \rightarrow G, \quad (a,b)\mapsto \lim\limits_{n\to\infty}\mfrakH_{\frac{1}{n}}\Big(\mfrakL_{\frac{1}{n}}(a)
\mfrakL_{\frac{1}{n}}(b)\Big)$$
is \complim.
\end{enumerate}
\mlabel{it:lhnrb1}
\item
Let $\frakg$ be a topological Lie algebra and $\MfrakL_{\frac{1
}{n}},\MfrakH_{\frac{1}{n}}, B:\frakg \rightarrow \frakg, n\geq 1,$ be linear maps.
We call $\big(\frakg,\big(\MfrakL_{\frac{1}{n}},\MfrakH_{\frac{1}{n}}\big)\big)$  a {\bf $\lim\limits_{n\to\infty}(\MfrakL_{\frac{1}{n}},\MfrakH_{\frac{1}{n}})$-Lie algebra} if
\begin{enumerate}
\item $\frakg$ is a $(\MfrakL_{\frac{1}{n}}, \MfrakH_{\frac{1}{n}})$-Lie algebra for each $n\geq 1$;
\item the limit $\lim\limits_{n\to\infty}\MfrakH_{\frac{1}{n}}\Big([\MfrakL_{\frac{1}{n}}(u),\MfrakL_{\frac{1}{n}}(v)]\Big)$ for $(u,v)\in \frakg\times \frakg,$ uniquely exists; and \item the map $$\lim\limits_{n\to\infty}\MfrakH_{\frac{1}{n}}\Big([\MfrakL_{\frac{1}{n}},\MfrakL_{\frac{1}{n}}]\Big):\frakg \times \frakg \rightarrow \frakg, \quad (u,v)\mapsto
\lim\limits_{n\to\infty}\MfrakH_{\frac{1}{n}}\Big([\MfrakL_{\frac{1}{n}}(u),\MfrakL_{\frac{1}{n}}(v)]\Big)$$
is \complim.
\end{enumerate}
\end{enumerate}
\end{defn}

Let $G$ be a Lie group and $\frakg$ its Lie algebra. We equip a topology on $\frakg$ by taking the open sets of $\frakg$ to be the inverse images of the open sets of $G$ under the map $\exp:\frakg\to G$, called the {\bf topology on $\frakg$ induced from its Lie group $G$}.

\begin{lemma}~\cite{GGH}
\mlabel{lemma:groupyr}
Let $G$ be a $\lim\limits_{n\to\infty}(\mfrakL_{\frac{1}{n}},\mfrakH_{\frac{1}{n}})$-Lie group with $(\mfrakL_{\frac{1}{n}},\mfrakH_{\frac{1}{n}})$ a unital pair for each $n\geq 1$. Let $\frakg = T_e G$ be the Lie algebra of $G$, and let $
 \MfrakL_{\frac{1}{n}}:=(\mfrakL_{\frac{1}{n}})_{*e}$,  $\MfrakH_{\frac{1}{n}}:=(\mfrakH_{\frac{1}{n}})_{*e}$
be the tangent maps of $\mfrakL_{\frac{1}{n}}$, $\mfrakH_{\frac{1}{n}}$ at the identity $e$, respectively. Then with the induced topology from $G$, $\frakg$ is a $\lim\limits_{n\to\infty}(\MfrakL_{\frac{1}{n}},\MfrakH_{\frac{1}{n}})$-Lie algebra.
\end{lemma}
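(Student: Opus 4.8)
The statement to prove is Lemma~\ref{lemma:groupyr}: given a $\lim_{n\to\infty}(\mfrakL_{\frac{1}{n}},\mfrakH_{\frac{1}{n}})$-Lie group $G$ with each pair $(\mfrakL_{\frac{1}{n}},\mfrakH_{\frac{1}{n}})$ unital, the tangent maps $\MfrakL_{\frac{1}{n}} := (\mfrakL_{\frac{1}{n}})_{*e}$ and $\MfrakH_{\frac{1}{n}} := (\mfrakH_{\frac{1}{n}})_{*e}$ make $\frakg = T_eG$ a $\lim_{n\to\infty}(\MfrakL_{\frac{1}{n}},\MfrakH_{\frac{1}{n}})$-Lie algebra, with the topology on $\frakg$ induced from $G$ via $\exp$.

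The plan is to verify the three defining conditions of a $\lim_{n\to\infty}(\MfrakL_{\frac{1}{n}},\MfrakH_{\frac{1}{n}})$-Lie algebra in turn. First, for each fixed $n$, I would show $\frakg$ is a $(\MfrakL_{\frac{1}{n}},\MfrakH_{\frac{1}{n}})$-Lie algebra, i.e. $\MfrakL_{\frac{1}{n}}\MfrakH_{\frac{1}{n}} = \id_\frakg$. This is the functoriality of the tangent-map construction at the identity: since $\mfrakL_{\frac{1}{n}}\mfrakH_{\frac{1}{n}} = \id_G$ and both maps are unital (so both fix $e$ and hence induce well-defined maps on $T_eG$), the chain rule gives $(\mfrakL_{\frac{1}{n}})_{*e}\circ(\mfrakH_{\frac{1}{n}})_{*e} = (\mfrakL_{\frac{1}{n}}\circ\mfrakH_{\frac{1}{n}})_{*e} = (\id_G)_{*e} = \id_\frakg$. (One should note that the maps $\mfrakL_{\frac{1}{n}},\mfrakH_{\frac{1}{n}}$ are assumed smooth, or at least differentiable at $e$, for the tangent maps to exist — this is implicit in the setup.) Also $\MfrakL_{\frac{1}{n}},\MfrakH_{\frac{1}{n}}$ are linear as differentials of smooth maps between vector spaces identified with tangent spaces.

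Second and third, I would handle the limit conditions together. The key identity is compatibility of $\exp$ with the bracket: for $u,v\in\frakg$ small, $\exp(tu)\exp(tv) = \exp\!\big(t(u+v) + \tfrac{t^2}{2}[u,v] + O(t^3)\big)$, and more precisely the tangent map at $e$ of the group multiplication recovers addition while the second-order term recovers the bracket (the standard Lie-theoretic fact underlying $\frakg = T_eG$). The strategy is to transport the group-level \complim limit map $\lim_n \mfrakH_{\frac{1}{n}}(\mfrakL_{\frac{1}{n}}\cdot\mfrakL_{\frac{1}{n}})$ — which exists and is \complim by hypothesis on $G$ — down to $\frakg$ through $\exp$: evaluate it on curves $t\mapsto \exp(tu)$, $t\mapsto\exp(tv)$, apply the chain rule to commute the tangent-map operation past each $\mfrakH_{\frac{1}{n}}$ and $\mfrakL_{\frac{1}{n}}$, and extract the bracket term. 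Because the topology on $\frakg$ is defined as the initial topology making $\exp$ continuous (open sets = preimages of open sets of $G$), continuity and limit statements on $\frakg$ translate directly into the corresponding statements on $G$, and the uniqueness/existence of $\lim_n \MfrakH_{\frac{1}{n}}([\MfrakL_{\frac{1}{n}}(u),\MfrakL_{\frac{1}{n}}(v)])$ follows from that of the group-level limit. The \complim property of the resulting bracket-limit map on $\frakg$ follows the same way: given $u_n\to u$, $v_n\to v$ in $\frakg$, the curves $\exp(tu_n),\exp(tv_n)$ converge appropriately in $G$, so \complim-ness upstairs (Definition~\ref{defn:synchronized-limit}, Equation~\eqref{eq:jx}) pushes down.

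The main obstacle I expect is the second-order bookkeeping: making rigorous the passage from the \emph{first-order} data (tangent maps $\MfrakL_{\frac{1}{n}},\MfrakH_{\frac{1}{n}}$, addition $u+v$) to the \emph{second-order} data (the Lie bracket $[\MfrakL_{\frac{1}{n}}(u),\MfrakL_{\frac{1}{n}}(v)]$), while simultaneously interchanging three limits — the $t\to 0$ limit defining the bracket on $\frakg$, the $n\to\infty$ limit defining the \complim limit map, and (for the \complim conclusion) the limits $u_n\to u$, $v_n\to v$. The \complim hypothesis on $G$ is precisely what licenses the interchange of the $n\to\infty$ limit with sequential limits in the arguments, so the delicate point is to also control the $t\to 0$ expansion uniformly enough, near $e$, that it commutes with $\lim_n$. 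I would reduce this to a local statement near $e$ (where $\exp$ is a diffeomorphism onto a neighborhood) and phrase the bracket extraction in coordinates given by $\exp$, so that the whole argument becomes a statement about difference quotients of the continuous map $\lim_n\mfrakH_{\frac{1}{n}}(\mfrakL_{\frac{1}{n}}\cdot\mfrakL_{\frac{1}{n}})$ in these coordinates; the \complim property then guarantees the difference quotients behave correctly in the limit. Everything else — linearity of the $\MfrakL_{\frac{1}{n}}$, the relation $\MfrakL_{\frac{1}{n}}\MfrakH_{\frac{1}{n}}=\id$, and the translation of topological statements between $\frakg$ and $G$ — is routine once the setup is in place.
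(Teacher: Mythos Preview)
The paper does not prove this lemma: it is quoted verbatim from~\cite{GGH} with no argument given, so there is no ``paper's own proof'' to compare against. That said, your plan is in line with how the paper handles the analogous tangent-space passages elsewhere (e.g.\ Proposition~\ref{prop:leiswl}, Lemma~\ref{lemma:prec}, Theorem~\ref{thm:tangprelie}): differentiate the group-level expression along curves $e^{tu},e^{sv}$, extract the bracket via the mixed second derivative $\left.\frac{\dd^2}{\dd t\,\dd s}\right|_{t,s=0}$ of the commutator, and push the tangent map $(\mfrakH_{\frac{1}{n}})_{*e}$ through using the chain rule. Your first step ($\MfrakL_{\frac{1}{n}}\MfrakH_{\frac{1}{n}}=\id_\frakg$ by functoriality of the differential) and your use of the induced topology to transfer limits are exactly right. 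The one place where you are being more scrupulous than the paper is the interchange of $\lim_{n\to\infty}$ with the $t,s\to 0$ derivative; in the paper's style this is done formally, so if you are writing in that register you can simply compute as in Lemma~\ref{lemma:prec} rather than setting up a uniform-in-$t$ estimate.
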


We now give a relative version of $\lim\limits_{n\to\infty}(\mfrakL_{\frac{1}{n}},\mfrakH_{\frac{1}{n}})$-groups and
$\lim\limits_{n\to\infty}(\MfrakL_{\frac{1}{n}},\MfrakH_{\frac{1}{n}})$-Lie algebras. In the following, for an action $\Gamma$ of a group $G$ on a group $H$, we denote
$$\RE:G\rightarrow \Aut(H), \quad  a\mapsto \Gamma_a.$$
Likewise, for an action of a Lie algebra $\frakg$ on a Lie algebra $\frakh$, we denote
$$\re:\frakg\rightarrow \Der(\frakh), \quad u\mapsto \gamma_u.$$

\begin{defn}
\begin{enumerate}
\item Let $H$ be a $\lim\limits_{n\to\infty}(\mfrakL_{\frac{1}{n}},\mfrakH_{\frac{1}{n}})$-group and $G$ a group with a group action $\Gamma:G\to \Aut(H)$.
  We call $H$ an {\bf  action-\complim $\lim\limits_{n\to\infty}(\mfrakL_{\frac{1}{n}},\mfrakH_{\frac{1}{n}})$-group} for the action $\RE$ if, for each $a\in G$ and $b \in H$, the limit
$$\lim\limits_{n\to\infty}\mfrakH_{\frac{1}{n}}\RE_{a}\Big(\big(\mfrakL_{\frac{1}{n}}(b)\big)\Big)$$
uniquely exists and, for each sequence $b_n, n\geq 1,$ in $H$ with $\lim\limits_{n\to\infty}b_n=b$, there is
$$\lim\limits_{n\to\infty}\mfrakH_{\frac{1}{n}}\RE_{a}\bigg(\Big(\mfrakL_{\frac{1}{n}}(b)\Big)\bigg)=\lim\limits_{n\to\infty}\mfrakH_{\frac{1}{n}}\RE_{a}\bigg(\Big(\mfrakL_{\frac{1}{n}}(b_n)\Big)\bigg).$$
\item Let $\frakh$ be a  $\lim\limits_{n\to\infty}(\MfrakL_{\frac{1}{n}},\MfrakH_{\frac{1}{n}})$-Lie algebra and $\frakg$ a Lie algebra  with a Lie algebra action $\re:\frakg \rightarrow \Der(\frakh)$. We call $\frakh$ an {\bf action-\complim $\lim\limits_{n\to\infty}(\MfrakL_{\frac{1}{n}},\MfrakH_{\frac{1}{n}})$-Lie algebra} for the action $\re$ if, for each $u\in \frakg$ and $v \in \frakh$, the limit
$$\lim\limits_{n\to\infty}\MfrakH_{\frac{1}{n}}\Big(\re_{u}\big(\MfrakL_{\frac{1}{n}}(v)\big)\Big)$$
unique exists and, for each sequence $v_n, n\geq 1$ in $\frakh$ with $\lim\limits_{n\to\infty}v_n=v$, there is
$$\lim\limits_{n\to\infty}\MfrakH_{\frac{1}{n}}\bigg(\re_{u}\Big(\MfrakL_{\frac{1}{n}}(v)\Big)\bigg)=\lim\limits_{n\to\infty}\MfrakH_{\frac{1}{n}}\bigg(\re_{u}\Big(\MfrakL_{\frac{1}{n}}(v_n)\Big)\bigg).$$
\end{enumerate}
\end{defn}

There is a relation between action-\complim Lie groups and action-\complim Lie algebras as follows.

\begin{prop}
Let $G$ be a Lie group and $H$ be an action-\complim $\lim\limits_{n\to\infty}(\mfrakL_{\frac{1}{n}},\mfrakH_{\frac{1}{n}})$-group for a smooth action $\RE:G\rightarrow \Aut(H)$ where $(\mfrakL_{\frac{1}{n}},\mfrakH_{\frac{1}{n}})$ are unital pairs. Let $\frakh = T_e H$ be the Lie algebra of $H$, and let $
 \MfrakL_{\frac{1}{n}}:=(\mfrakL_{\frac{1}{n}})_{*e}$ and $\MfrakH_{\frac{1}{n}}:=(\mfrakH_{\frac{1}{n}})_{*e}$
be the tangent maps of $\mfrakL_{\frac{1}{n}}$ and $\mfrakH_{\frac{1}{n}}$ at the identity $e$, respectively. Then with the induced topology from $H$, $\frakh$ is an action-\complim $\lim\limits_{n\to\infty}(\MfrakL_{\frac{1}{n}},\MfrakH_{\frac{1}{n}})$-Lie algebra for the action $\re:=\RE_{*e}:\frakg\rightarrow \Der(\frakh)$.
\mlabel{prop:leiswl}
\end{prop}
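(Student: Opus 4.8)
The plan is to reduce this statement to the non-relative case, which is exactly Lemma~\mref{lemma:groupyr}, together with a compatibility check that the tangent maps interact well with the action $\RE$. First I would recall the two conditions that must be verified for $\frakh$ to be an action-\complim $\lim\limits_{n\to\infty}(\MfrakL_{\frac{1}{n}},\MfrakH_{\frac{1}{n}})$-Lie algebra: (i) $\frakh$ is itself a $\lim\limits_{n\to\infty}(\MfrakL_{\frac{1}{n}},\MfrakH_{\frac{1}{n}})$-Lie algebra, and (ii) for each $u\in\frakg$ and $v\in\frakh$ the limit $\lim_{n\to\infty}\MfrakH_{\frac{1}{n}}\big(\re_u(\MfrakL_{\frac{1}{n}}(v))\big)$ exists uniquely and is insensitive to replacing $v$ by a convergent sequence $v_n\to v$. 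Condition (i) is handed to us directly: since $H$ is in particular a $\lim\limits_{n\to\infty}(\mfrakL_{\frac{1}{n}},\mfrakH_{\frac{1}{n}})$-group with unital pairs, Lemma~\mref{lemma:groupyr} (applied with $G$ replaced by $H$ there) says $\frakh=T_eH$ with the induced topology is a $\lim\limits_{n\to\infty}(\MfrakL_{\frac{1}{n}},\MfrakH_{\frac{1}{n}})$-Lie algebra. So the content lies entirely in condition (ii).

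For condition (ii), the key observation is that $\re_u=(\RE_{*e})_u$ is, for each fixed $u\in\frakg$, the tangent map at $e$ of the automorphism $\RE_{a}\in\Aut(H)$ differentiated along a curve through $u$; more usefully, for a fixed group element $a\in G$ the automorphism $\RE_a:H\to H$ is a smooth map fixing $e$, so its tangent map $(\RE_a)_{*e}:\frakh\to\frakh$ is a well-defined linear automorphism of $\frakh$. I would work at the level of $a\in G$ first: the hypothesis that $H$ is action-\complim for $\RE$ gives, for each such $a$, the existence and sequence-insensitivity of $\lim_{n\to\infty}\mfrakH_{\frac{1}{n}}\RE_a(\mfrakL_{\frac{1}{n}}(b))$ in $H$. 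Composing with $\exp:\frakh\to H$ and using that $\mfrakL_{\frac{1}{n}},\mfrakH_{\frac{1}{n}}$ and $\RE_a$ are all smooth and unital, one differentiates this relation at $e$: since the topology on $\frakh$ is by definition pulled back along $\exp$, and since passing to tangent maps commutes with composition, the limit $\lim_{n\to\infty}(\mfrakH_{\frac{1}{n}})_{*e}\circ(\RE_a)_{*e}\circ(\mfrakL_{\frac{1}{n}})_{*e}=\lim_{n\to\infty}\MfrakH_{\frac{1}{n}}\circ(\RE_a)_{*e}\circ\MfrakL_{\frac{1}{n}}$ exists in $\frakh$ and is sequence-insensitive. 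This is condition (ii) but with $\re_u$ replaced by $(\RE_a)_{*e}$; to get the Lie-algebra action $\re_u=\gamma_u$ one takes $a=\exp(tu)$ and differentiates in $t$ at $t=0$, using that differentiation in $t$ can be interchanged with the (sequence-insensitive, \complim) limit in $n$ — this interchange is legitimate precisely because the $n$-limit is \complim, i.e. it respects taking limits of the arguments, so it in particular respects the difference quotients defining $\frac{d}{dt}\big|_{0}$.

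I expect the main obstacle to be the last interchange of limits — differentiating the $n$-indexed family in the action parameter $t$ and moving $\tfrac{d}{dt}\big|_0$ inside $\lim_{n\to\infty}$. The \complim/\compgroup property in Definition~\mref{defn:synchronized-limit} is tailored to exactly this: it converts an iterated limit into a diagonal limit, which is what lets one identify $\lim_n \MfrakH_{\frac1n}(\re_u(\MfrakL_{\frac1n}(v)))$ with the tangent direction of $\lim_n \mfrakH_{\frac1n}\RE_{\exp(tu)}(\mfrakL_{\frac1n}(b))$ at $t=0$. One has to be slightly careful that the smooth action $\RE$ being smooth as a map $G\times H\to H$ ensures $(a,b)\mapsto (\RE_a)_{*e}(v)$ is continuous, so the curve $t\mapsto \RE_{\exp(tu)}$ feeds continuously into the already-established sequence-insensitive limit; granting this, the verification of condition (ii) and hence of the proposition is routine. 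The sequence-insensitivity half of (ii), namely replacing $v$ by $v_n\to v$, follows the same template: lift $v_n\to v$ to $b_n=\exp(v_n)\to b=\exp(v)$ in $H$, apply the action-\complim hypothesis for $\RE$ at each $a$, differentiate, and then differentiate in $t$ as before.
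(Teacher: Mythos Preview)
Your proposal is correct and follows essentially the same approach as the paper: differentiate the group-level action-\complim hypothesis at the identity to obtain the Lie-algebra-level one, with condition (i) handled by Lemma~\mref{lemma:groupyr}. The only cosmetic difference is that you decompose the passage to the tangent space into two steps (first differentiate in the $H$-variable to get $(\RE_a)_{*e}$, then in $t$ along $a=\exp(tu)$ to get $\re_u$), whereas the paper does both at once via a single mixed second derivative $\left.\dfrac{\dd^2}{\dd t\,\dd s}\right|_{t,s=0}$ applied to $\mfrakH_{\frac{1}{n}}\big(\RE_{e^{tu}}(\mfrakL_{\frac{1}{n}}(e^{sv}))\big)$; the underlying computation and the limit interchange you flag are the same.
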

\begin{proof}
For each $v\in \frakh$ and $a\in G$, we have
{\small
\begin{align*}
\left.\dfrac{\dd^2}{\dd t\dd s}\right|_{t,s=0}\lim\limits_{n\to\infty}\mfrakH_{\frac{1}{n}}\Bigg(\RE_{e^{tu}}
\bigg(\mfrakL_{\frac{1}{n}}(e^{sv})\bigg)\Bigg)
= \lim_{n\to\infty}(\mfrakH_{\frac{1}{n}})_{*e}\bigg(\left.\dfrac{\dd^2}{\dd t\dd s}\right|_{t,s=0}a\RE_{e^{tu}}\bigg(\mfrakL_{\frac{1}{n}}(e^{sv})\bigg)\bigg)= \lim\limits_{n\to\infty}\MfrakH_{\frac{1}{n}}\bigg(\re_{u}\MfrakL_{\frac{1}{n}}(v)\bigg),
\end{align*}}
and for each sequence $v_{n}, n\geq 1$ in $\frakh$ with $\lim\limits_{n\to\infty}v_n=v$, we have
{\small
\begin{align*}
&\ \left.\dfrac{\dd^2}{\dd t\dd s}\right|_{t,s=0}\lim_{n\to\infty}\mfrakH_{\frac{1}{n}}\Bigg(\RE_{e^{tu}}\bigg(\mfrakL_{\frac{1}{n}}(e^{sv})\bigg)\Bigg) =\left.\dfrac{\dd^2}{\dd t\dd s}\right|_{t,s=0}\lim\limits_{n\to\infty}\mfrakH_{\frac{1}{n}}\Bigg(\RE_{e^{tu}}\bigg(\mfrakL_{\frac{1}{n}}(e^{sv_n})\bigg)\Bigg) \\
=&\ \lim_{n\to\infty}(\mfrakH_{\frac{1}{n}})_{*e}\Bigg(\left.\dfrac{\dd^2}{\dd t\dd s}\right|_{t,s=0}\RE_{e^{tu}}\bigg(\mfrakL_{\frac{1}{n}}(e^{sv_n})\bigg)\Bigg)
= \lim\limits_{n\to\infty}\MfrakH_{\frac{1}{n}}
\bigg(\re_{u}\Big(\MfrakL_{\frac{1}{n}}(v_n)\Big)\bigg). \qedhere
\end{align*}}
\end{proof}

\subsection{Limit-weighted relative Rota-Baxter operators}
We now introduce our main notions on relative Rota-Baxter operators with limit-weights on groups.

\begin{defn}
\begin{enumerate}
\item
Let $(G,\cdot_{G})$ be a group and $(H,\cdot_{H})$ be an action-\complim $\lim\limits_{n\to\infty}(\mfrakL_{\frac{1}{n}},\mfrakH_{\frac{1}{n}})$-group for an action $\RE:G\rightarrow \Aut(H)$. A map $\frakB:H\rightarrow G$ is called a {\bf relative Rota-Baxter operator with limit-weight $\lim\limits_{n\to\infty}(\mfrakL_{\frac{1}{n}},\mfrakH_{\frac{1}{n}})$ on $(G,\cdot_{G})$ for the action $\RE$} if
\begin{equation}
	\frakB(a)\cdot_{G}\frakB(b)=\frakB\bigg(\lim_{n\to\infty}
	\mfrakH_{\frac{1}{n}}\bigg(\mfrakL_{\frac{1}{n}}(a)
	\cdot_{H} \RE_{\frakB(a)} \Big(\mfrakL_{\frac{1}{n}}(b)\Big) \bigg)\bigg), \quad a,b\in H.
	\mlabel{eq:relrbozero}
\end{equation}
We then call $(H, G, \Gamma, \frakB)$ a {\bf relative Rota-Baxter group with limit-weight $\lim\limits_{n\to\infty}(\mfrakL_{\frac{1}{n}},\mfrakH_{\frac{1}{n}})$}.
\mlabel{it:relrblimit}
\item
A relative Rota-Baxter operator $\frakB:H\to G$ with limit-weight $\lim\limits_{n\to\infty}(\mfrakL_{\frac{1}{n}},\mfrakH_{\frac{1}{n}})$ that satisfies  $\lim\limits_{n\to\infty}\mfrakL_{\frac{1}{n}}(a)=e$ for $a\in G$ is called a {\bf relative Rota-Baxter operator with \limwtzero}. We then call $(H, G, \Gamma, \frakB)$ a {\bf relative Rota-Baxter group  with \limwtzero}.
\mlabel{it:relrblimzero}
\end{enumerate}
\mlabel{defn:relrb}
\end{defn}

\begin{exam}
\begin{enumerate}
\item
With respect to the adjoint action, a relative Rota-Baxter operator with limit-weight or limit-weight zero is reduced to the corresponding notion of (non-relative) Rota-Baxter operator in~\mcite{GGH}.
\item
When all the maps $\mfrakL_{\frac{1}{n}}$ and $\mfrakH_{\frac{1}{n}}, n\geq 1$, are taken to be the identity map on $H$ in a relative Rota-Baxter operator with limit-weight $\lim\limits_{n\to\infty}(\mfrakL_{\frac{1}{n}},\mfrakH_{\frac{1}{n}})$ on a group $H$, we recover the relative Rota-Baxter operator (with weight one) on $H$ in~\mcite{LST1}.
\end{enumerate}
\end{exam}

We now define the descent group from a relative Rota-Baxter operator with limit-weight.

\begin{theorem}
\mlabel{thm:rbdes1}
Let $ \frakB:H\rightarrow G$ be a relative Rota-Baxter operator with limit-weight $\lim\limits_{n\to\infty}(\mfrakL_{\frac{1}{n}},\mfrakH_{\frac{1}{n}})$ on a group $(G,\cdot_{G})$ for an action $\RE:G\rightarrow \Aut(H)$.
\begin{enumerate}
\item With the multiplication
\begin{equation*}
\mlabel{eq:nast}
a \ast b := \lim_{n\to \infty}\mfrakH_{\frac{1}{n}}\Big(\mfrakL_{\frac{1}{n}}(a) \RE_{\frakB\left(a\right)}\mfrakL_{\frac{1}{n}}(b) \Big), \quad  a,b\in H,
\end{equation*}
$H$ is a semigroup.
\mlabel{it:desgroup1}

\item If $(\mfrakL_{\frac{1}{n}},\mfrakH_{\frac{1}{n}})$ is a bijective pair for each $n\geq 1$, and
if the limit
$\lim\limits_{n \to \infty }\mfrakH_{\frac{1}{n}}(e) $
uniquely exists, then $\big(H,*)$ is a group, called the {\bf descent group} of $(H, \frakB)$.
\mlabel{it:desgroup2}
\end{enumerate}
\end{theorem}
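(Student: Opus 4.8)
The plan is to build the whole argument on the reformulation of~\meqref{eq:relrbozero} as $\frakB(a\ast b)=\frakB(a)\cdot_G\frakB(b)$: since $a\ast b$ is by definition exactly the argument of $\frakB$ on the right-hand side of~\meqref{eq:relrbozero}, whose existence is already presupposed by the notion of a relative Rota-Baxter operator with limit-weight, the operation $\ast$ is well defined and $\frakB\colon(H,\ast)\to(G,\cdot_G)$ is a semigroup morphism. The one manipulation used throughout I will call \emph{exposing the approximating sequence}: whenever an argument of $\mfrakL_{\frac{1}{n}}$ is itself a limit $\lim_{m\to\infty}u_m$ coming from a $\ast$-product (or from the element $\mathbf 1$ introduced below), it may be replaced by the term $u_n$, a diagonalization licensed by Definition~\mref{defn:synchronized-limit}; this is legitimate because the two standing hypotheses on $H$ --- being a $\lim_{n\to\infty}(\mfrakL_{\frac{1}{n}},\mfrakH_{\frac{1}{n}})$-group and being action-\complim for $\RE$ --- are exactly what permits commuting the limit over $n$ past such approximating sequences, after which one simplifies $\mfrakL_{\frac{1}{n}}(u_n)$ via $\mfrakL_{\frac{1}{n}}\mfrakH_{\frac{1}{n}}=\id_H$. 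Carrying this out for $(a\ast b)\ast c$, and using $\RE_{\frakB(a\ast b)}=\RE_{\frakB(a)\cdot_G\frakB(b)}=\RE_{\frakB(a)}\RE_{\frakB(b)}$ (since $\RE$ is a homomorphism into $\Aut(H)$), the distributivity of each $\RE_g$ over $\cdot_H$, and associativity of $\cdot_H$, both $(a\ast b)\ast c$ and $a\ast(b\ast c)$ collapse to
\[
\lim_{n\to\infty}\mfrakH_{\frac{1}{n}}\Big(\mfrakL_{\frac{1}{n}}(a)\cdot_H\RE_{\frakB(a)}\big(\mfrakL_{\frac{1}{n}}(b)\cdot_H\RE_{\frakB(b)}\mfrakL_{\frac{1}{n}}(c)\big)\Big),
\]
which proves associativity and hence part~(a).

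For part~(b) I would set $\mathbf 1:=\lim_{n\to\infty}\mfrakH_{\frac{1}{n}}(e)$, which exists by hypothesis. Exposing the approximating sequence $\mfrakH_{\frac{1}{n}}(e)$ for $\mathbf 1$ inside $a\ast\mathbf 1$, and using both $\mfrakL_{\frac{1}{n}}\mfrakH_{\frac{1}{n}}=\id_H$ and $\mfrakH_{\frac{1}{n}}\mfrakL_{\frac{1}{n}}=\id_H$ (the pairs being bijective) together with the fact that the automorphism $\RE_{\frakB(a)}$ fixes $e$, one gets $a\ast\mathbf 1=\lim_{n\to\infty}\mfrakH_{\frac{1}{n}}(\mfrakL_{\frac{1}{n}}(a))=a$; so $\mathbf 1$ is a right identity and in particular $\mathbf 1\ast\mathbf 1=\mathbf 1$. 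The morphism property then forces $\frakB(\mathbf 1)^2=\frakB(\mathbf 1)$ in the group $G$, hence $\frakB(\mathbf 1)=e$ and $\RE_{\frakB(\mathbf 1)}=\id_H$; feeding this into $\mathbf 1\ast a$ and simplifying in the same way yields $\mathbf 1\ast a=a$. Thus $(H,\ast)$ is a monoid with identity $\mathbf 1$.

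To promote this monoid to a group it suffices, by the elementary fact that a monoid in which every element has a right inverse is a group, to produce for each $a\in H$ an element $a^{-\ast}$ with $a\ast a^{-\ast}=\mathbf 1$. Guided by the weight-one relative case, the natural candidate is
\[
a^{-\ast}:=\lim_{n\to\infty}\mfrakH_{\frac{1}{n}}\Big(\RE_{\frakB(a)^{-1}}\big(\mfrakL_{\frac{1}{n}}(a)^{-1}\big)\Big),
\]
with the inverse taken in $H$; granting that this limit exists, the equality $a\ast a^{-\ast}=\mathbf 1$ would follow by exposing the approximating sequence and simplifying, the expression inside $\mfrakH_{\frac{1}{n}}$ becoming $\mfrakL_{\frac{1}{n}}(a)\cdot_H\RE_{\frakB(a)}\RE_{\frakB(a)^{-1}}\big(\mfrakL_{\frac{1}{n}}(a)^{-1}\big)=\mfrakL_{\frac{1}{n}}(a)\cdot_H\mfrakL_{\frac{1}{n}}(a)^{-1}=e$, leaving $\lim_{n\to\infty}\mfrakH_{\frac{1}{n}}(e)=\mathbf 1$.

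The step I expect to be the main obstacle is exactly the existence of this defining limit for $a^{-\ast}$ --- equivalently, the existence of $\ast$-inverses at all --- because, unlike the ``expose-and-simplify'' manipulations above, it is not a formal consequence of the hypotheses: a limit of the shape $\lim_{n\to\infty}\mfrakH_{\frac{1}{n}}(h_n^{-1})$ need not exist merely because $\lim_{n\to\infty}\mfrakH_{\frac{1}{n}}(h_n)$ does. This is the relative counterpart of the step that, in the non-relative setting of~\mcite{GGH}, upgrades the descent semigroup to a group under precisely the hypotheses ``bijective pairs'' and ``$\lim_{n\to\infty}\mfrakH_{\frac{1}{n}}(e)$ exists'', so the plan is to reproduce that argument here, extracting the existence of $a^{-\ast}$ from the interplay of bijectivity, the existence of $\lim_{n\to\infty}\mfrakH_{\frac{1}{n}}(e)$, and the synchronization hypotheses (Definition~\mref{defn:synchronized-limit}); once this is secured, the verification above identifies $a^{-\ast}$ as a right $\ast$-inverse of $a$, and the elementary monoid fact then gives that $(H,\ast)$ is a group, the descent group of $(H,\frakB)$.
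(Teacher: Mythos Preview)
Your proposal is correct and follows essentially the same route as the paper: use the \complim property to collapse both $(a\ast b)\ast c$ and $a\ast(b\ast c)$ to the common expression $\lim_{n\to\infty}\mfrakH_{\frac{1}{n}}\big(\mfrakL_{\frac{1}{n}}(a)\,\RE_{\frakB(a)}\mfrakL_{\frac{1}{n}}(b)\,\RE_{\frakB(a)\frakB(b)}\mfrakL_{\frac{1}{n}}(c)\big)$, then exhibit $\mathbf 1=\lim_{n\to\infty}\mfrakH_{\frac{1}{n}}(e)$ as identity and $a^{-\ast}=\lim_{n\to\infty}\mfrakH_{\frac{1}{n}}\big(\RE_{\frakB(a)^{-1}}(\mfrakL_{\frac{1}{n}}(a)^{-1})\big)$ as right inverse. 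Two minor remarks: your detour through the left identity is unnecessary, since the paper (and standard semigroup theory) uses only the right identity and right inverse; and the obstacle you flag --- existence of the limit defining $a^{-\ast}$ --- is not addressed in the paper's proof either, which simply writes down this element and computes $a\ast a^{-\ast}=\mathbf 1$ without separately justifying convergence, so you need not import any additional argument from~\mcite{GGH}.
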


\begin{proof}
(\mref{it:desgroup1})
We just need to check the associativity of $\ast$: for $a, b, c\in H$, we have
{\small
\begin{align*}
 (a \ast b ) \ast c
=&\  \bigg( \lim_{n\to \infty} \Big( \mfrakL_{\frac{1}{n}}(a)\RE_{\frakB\left(a\right)}(\mfrakL_{\frac{1}{n}}(b) ) \Big)  \bigg) \ast c\\
=&\ \lim_{\substack{n=m \to \infty}}  \mfrakH_{\frac{1}{m}} \Bigg(\mfrakL_{\frac{1}{m}} \bigg(\mfrakH_{\frac{1}{n}} \Big( \mfrakL_{\frac{1}{n}}(a) \RE_{\frakB\left(a\right) }(\mfrakL_{\frac{1}{n}}(b) ) \Big)  \bigg) \RE_{\frakB\left(a\right)\frakB\left(b\right)}(\mfrakL_{\frac{1}{m}}(c)  )\Bigg) \quad \quad \text{(by Eq.~\meqref{eq:jx})}\\
=&\ \lim_{n\to \infty}\mfrakH_{\frac{1}{n}} \Big( \mfrakL_{\frac{1}{n}}(a) \RE_{\frakB\left(a\right)}\mfrakL_{\frac{1}{n}}(b) \RE_{\frakB\left(a\right)\frakB\left(b\right)}(c) \Big)\\
=&\ \lim_{\substack{n=m \to \infty}} \mfrakH_{\frac{1}{m}} \Bigg( \mfrakL_{\frac{1}{m}}(a) \RE_{\frakB\left(a\right)}\bigg( \mfrakL_{\frac{1}{m}} \Big(\mfrakH_{\frac{1}{n}} \Big( \mfrakL_{\frac{1}{n}}(b) \RE_{\frakB\left(b\right)}(\mfrakL_{\frac{1}{n}}(c))\Big)  \Big ) \bigg)\Bigg) \\
=&\ a \ast \mfrakH_{\frac{1}{n}}  \Big(\mfrakL_{\frac{1}{n}}(b) \RE_{\frakB\left(b\right)}\mfrakL_{\frac{1}{n}}(c)  \Big)   \hspace{1cm}\text{(by Eq.~\meqref{eq:jx})}\\
=&\ a \ast  (b \ast c ).
\end{align*}}

\noindent
(\mref{it:desgroup2})
We first have
{\small
\begin{align*}
    \frakB \big(\lim\limits_{n \to \infty }\mfrakH_{\frac{1}{n}}(e)\big)\frakB \big(\lim\limits_{n \to \infty }\mfrakH_{\frac{1}{n}}(e)\big)=&\ \frakB\bigg(\lim_{n\to\infty}\mfrakH\Big(\mfrakL_{\frac{1}{n}}(\mfrakH_{\frac{1}{n}}(e))\mfrakL_{\frac{1}{n}}(\mfrakH_{\frac{1}{n}}(e))\Big)\bigg)\ \quad \text{(by Eq~\meqref{eq:jx})}\\
    =&\ \frakB \big(\lim\limits_{n \to \infty }\mfrakH_{\frac{1}{n}}(e)\big),
\end{align*}
}
so we get $\frakB \big(\lim\limits_{n \to \infty }\mfrakH_{\frac{1}{n}}(e)\big)=e$.
Further,
{\small
\begin{align*}
    a \ast \lim_{n\to \infty}\mfrakH_{\frac{1}{n}}(e)
    =&\ \lim_{\substack{n=m \to \infty}}\mfrakH_{\frac{1}{m}}\bigg(\mfrakL_{\frac{1}{m}}(a) \RE_{\frakB(a)}\Big( \mfrakL_{\frac{1}{m}}(\mfrakH_{\frac{1}{n}}(e)) \Big) \bigg) \quad \text{(by Eq.~\meqref{eq:jx})}\\
    =&\ \lim_{\substack{n \to \infty}}\mfrakH_{\frac{1}{n}}\bigg(\mfrakL_{\frac{1}{n}}(a) \RE_{\frakB(a)}\Big( \mfrakL_{\frac{1}{n}}(\mfrakH_{\frac{1}{n}}(e))\Big) \bigg)= \lim_{n\to\infty}\mfrakH_{\frac{1}{n}}(\mfrakL_{\frac{1}{n}}(a))= a,
\end{align*}}
and
{\small
\begin{align*}
&    a \ast \lim\limits_{n \to \infty }\mfrakH_{\frac{1}{n}} \Big(\RE_{\frakB (a )^{-1}}(\mfrakL_{\frac{1}{n}}(a)^{-1}) \Big)\\
    =&\ \lim_{\substack{n=m \to \infty}} \mfrakH_{\frac{1}{m}} \Bigg(\mfrakL_{\frac{1}{m}}(a)\RE_{\frakB (a )}\Bigg(\mfrakL_{\frac{1}{m}} \bigg(\mfrakH_{\frac{1}{n}} \Big(\RE_{\frakB (a )^{-1}}(\mfrakL_{\frac{1}{n}}(a)^{-1}) \Big) \bigg)\Bigg) \Bigg)\quad \text{(by Eq.~\meqref{eq:jx})}\\
    =&\ \lim_{\substack{n \to \infty }} \mfrakH_{\frac{1}{n}} \Big(\mfrakL_{\frac{1}{n}}(a)\RE_{\frakB (a )\frakB (a )^{-1}}(\mfrakL_{\frac{1}{n}}(a)^{-1})\Big)
    = \lim_{n\to \infty}\mfrakH_{\frac{1}{n}}(e). \qedhere
\end{align*}}
\end{proof}

Examples and properties of a Rota-Baxter operator with limit-weight zero on a group can be found in~\mcite{GGH}. We give the Lie algebra version of the above notions.

\begin{defn}
\mlabel{defn:relie}
Let $(\frakg,[\cdot,\cdot]_{\frakg})$ be a Lie algebra and $(\frakh,[\cdot,\cdot]_{\frakh})$ an action-\complim $\lim\limits_{n\to\infty}(\MfrakL_{\frac{1}{n}},\MfrakH_{\frac{1}{n}})$-Lie  algebra for an action $\re:\frakg \rightarrow \Der(\frakh)$. A linear operator $B:\frakh\to \frakg$ is called a {\bf relative Rota-Baxter operator with limit-weight $(\MfrakL_{\frac{1}{n}},\MfrakH_{\frac{1}{n}})$ on $\frakg$ for the action $\re$} if
\begin{equation*}
[B(u),B(v)]=B\bigg(\lim_{n\to\infty}\MfrakH_{\frac{1}{n}}
\Big(\re_{B(u)}(\MfrakL_{\frac{1}{n}}(v))-\re_{B(v)}
(\MfrakL_{\frac{1}{n}}(u))+[\MfrakL_{\frac{1}{n}}(u),\MfrakL_{\frac{1}{n}}(v)]\Big)\bigg),
\quad u, v\in \frakh.
\end{equation*}
We then call $(\frakh, \frakg, \gamma, B)$ a {\bf relative Rota-Baxter Lie algebra with limit-weight $(\MfrakL_{\frac{1}{n}},\MfrakH_{\frac{1}{n}})$}.
\end{defn}

\begin{exam}
\begin{enumerate}
\item
In Definition~\mref{defn:relie}, taking
$\MfrakL_{\frac{1}{n}}:=\frac{1}{n} \id_{\frakh}$ (resp. $\MfrakL_{\frac{1}{n}}:=\lambda \id_{\frakh}$) and $\MfrakH_{\frac{1}{n}}:=n\, \id_{\frakh}$ (resp. $\MfrakH_{\frac{1}{n}}:=\frac{1}{\lambda} \id_{\frakh}$), a relative Rota-Baxter Lie algebra with limit-weight $\lim\limits_{n\to\infty}(\MfrakL_{\frac{1}{n}},\MfrakH_{\frac{1}{n}})$
reduces to the usual relative Rota-Baxter operator with weight zero (resp. weight $\lambda$).
\mlabel{rk:re weight lambda}
\item
In the special case when the Lie algebra $(\frakh,[\cdot,\cdot]_{\frakh})$ is $(\frakg,[\cdot,\cdot]_{\frakg})$ and the action $\re$ is the adjoint action of $\frakg$, we recover the notion of Rota-Baxter operators on Lie algebras with limit-weight from~\mcite{GGH}.
\mlabel{rk: reweight 0}
\end{enumerate}
\mlabel{rk:relrblie}
\end{exam}

The relative Rota-Baxter operators on Lie groups and those on Lie algebras are related as follows.

\begin{theorem}
Let $(G,\cdot_{G})$ be a Lie group and $(H,\cdot_{H})$ be an action-\complim $\lim\limits_{n\to\infty}(\mfrakL_{\frac{1}{n}},\mfrakH_{\frac{1}{n}})$-group for a smooth action $\RE:G\rightarrow \Aut(H)$, such that $\mfrakL_{\frac{1}{n}}$ and $\mfrakH_{\frac{1}{n}}$, $n\geq 1$, are smooth and unital maps. Let $\frakB:H\rightarrow G$ be a smooth relative Rota-Baxter operator with limit-weight $\lim\limits_{n\to\infty}(\mfrakL_{\frac{1}{n}},\mfrakH_{\frac{1}{n}})$ for the action $\RE$.  Let $\frakg = T_e G$ be the Lie algebra of $G$ where the topology on $\frakg$ is induced from $G$, and let
$$
B:=\frakB_{*e},\quad \MfrakL_{\frac{1}{n}}:=(\mfrakL_{\frac{1}{n}})_{*e},\quad \MfrakH_{\frac{1}{n}}:=(\mfrakH_{\frac{1}{n}})_{*e},
$$
be the tangent maps at $e$. Then $B:\frakh\rightarrow\frakg$ is a relative Rota-Baxter operator of limit-weight $\lim\limits_{n\to\infty}(\MfrakL_{\frac{1}{n}},\MfrakH_{\frac{1}{n}})$ for the action $\re:=\RE_{*}$ where the topology on $\frakh$ is induced from $H$.
\mlabel{thm:tgrelim}
\end{theorem}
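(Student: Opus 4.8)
The plan is to differentiate the group identity \meqref{eq:relrbozero} twice at the identity, once in each of the two arguments, and identify the resulting expression with the Lie algebra identity in Definition~\mref{defn:relie}. The key technical input is that tangent maps can be pulled inside the limit $\lim_{n\to\infty}$ — this is exactly what the \compint hypothesis (via Proposition~\mref{prop:leiswl} applied to the action, and the definition of an action-\complim group) guarantees, and it is the same mechanism already used in the proof of Proposition~\mref{prop:leiswl}.

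First I would set up curves: for $u,v\in\frakh$, put $a=e^{su}$ and $b=e^{tv}$ in \meqref{eq:relrbozero}, so that the left-hand side becomes $\frakB(e^{su})\cdot_G\frakB(e^{tv})$ and the right-hand side becomes $\frakB\big(\lim_{n\to\infty}\mfrakH_{\frac{1}{n}}(\mfrakL_{\frac{1}{n}}(e^{su})\cdot_H\RE_{\frakB(e^{su})}(\mfrakL_{\frac{1}{n}}(e^{tv})))\big)$. Applying $\left.\frac{\dd^2}{\dd s\,\dd t}\right|_{s,t=0}$ to the left-hand side gives $[B(u),B(v)]_\frakg$, by the standard fact that the mixed second derivative of $(s,t)\mapsto g(s)h(t)$ at the origin in a Lie group computes the bracket of the two tangent vectors $g'(0),h'(0)$ — here $g(s)=\frakB(e^{su})$ has tangent vector $B(u)=\frakB_{*e}(u)$ and likewise for $h$.

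Next I would differentiate the right-hand side. Since $\frakB$ is smooth and $\frakB(e)=e$ (because $\frakB$ sends the \compint structure's identity-type element to $e$; more simply, unitality forces the argument at $s=t=0$ to be $e$), the outer $\frakB$ contributes its tangent map $B=\frakB_{*e}$, so the RHS second derivative is $B$ applied to the mixed second derivative of $(s,t)\mapsto \lim_{n\to\infty}\mfrakH_{\frac{1}{n}}(\mfrakL_{\frac{1}{n}}(e^{su})\cdot_H\RE_{\frakB(e^{su})}(\mfrakL_{\frac{1}{n}}(e^{tv})))$. Pulling $\left.\frac{\dd^2}{\dd s\,\dd t}\right|_{0}$ inside the limit (justified by the \compint/continuity hypotheses exactly as in Proposition~\mref{prop:leiswl}), and pulling $(\mfrakH_{\frac{1}{n}})_{*e}=\MfrakH_{\frac{1}{n}}$ out of the $n$-th term, reduces matters to computing the mixed second derivative at the origin of $(s,t)\mapsto \mfrakL_{\frac{1}{n}}(e^{su})\cdot_H\RE_{\frakB(e^{su})}(\mfrakL_{\frac{1}{n}}(e^{tv}))$ inside $H$. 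Expanding by the Leibniz rule for the mixed derivative of a product of two $H$-valued curves, and using that $\frac{\dd}{\dd s}\big|_0\mfrakL_{\frac{1}{n}}(e^{su})=\MfrakL_{\frac{1}{n}}(u)$, $\frac{\dd}{\dd t}\big|_0\RE_{\frakB(e^{su})}(\mfrakL_{\frac{1}{n}}(e^{tv}))=\RE_{\frakB(e^{su})*e}(\MfrakL_{\frac{1}{n}}(v))$ with $\RE_{e*e}=\id$, and the infinitesimal action identity $\left.\frac{\dd^2}{\dd s\,\dd t}\right|_{0}\RE_{e^{sB(u)}}(e^{t\MfrakL_{\frac{1}{n}}(v)})=\re_{B(u)}(\MfrakL_{\frac{1}{n}}(v))$ (here using $\RE_{*}=\re$ and $\frakB_{*e}=B$), one obtains precisely $\re_{B(u)}(\MfrakL_{\frac{1}{n}}(v))-\re_{B(v)}(\MfrakL_{\frac{1}{n}}(u))+[\MfrakL_{\frac{1}{n}}(u),\MfrakL_{\frac{1}{n}}(v)]_\frakh$ as the $n$-th term, after anti-symmetrizing in $u,v$ (the bracket term comes from the mixed derivative of the $H$-multiplication itself, and the cross terms produce the two $\re$ terms). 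Taking $\MfrakH_{\frac{1}{n}}$ back inside and the limit over $n$, the right-hand side becomes $B\big(\lim_{n\to\infty}\MfrakH_{\frac{1}{n}}(\re_{B(u)}(\MfrakL_{\frac{1}{n}}(v))-\re_{B(v)}(\MfrakL_{\frac{1}{n}}(u))+[\MfrakL_{\frac{1}{n}}(u),\MfrakL_{\frac{1}{n}}(v)]_\frakh)\big)$, matching Definition~\mref{defn:relie}; one also notes that $\frakh=T_eH$ is an action-\complim $\lim_{n\to\infty}(\MfrakL_{\frac{1}{n}},\MfrakH_{\frac{1}{n}})$-Lie algebra by Proposition~\mref{prop:leiswl}, so the target structure is legitimate.

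The main obstacle I expect is the interchange of the second-order differentiation $\left.\frac{\dd^2}{\dd s\,\dd t}\right|_{0}$ with the limit $\lim_{n\to\infty}$ and with the maps $\mfrakH_{\frac{1}{n}}$, $\mfrakL_{\frac{1}{n}}$: one must check that the hypotheses of an action-\complim $\lim_{n\to\infty}(\mfrakL_{\frac{1}{n}},\mfrakH_{\frac{1}{n}})$-group, together with smoothness of $\frakB$, $\RE$, and the $\mfrakL_{\frac{1}{n}},\mfrakH_{\frac{1}{n}}$, are strong enough to legitimize passing derivatives through the \complim limit. This is handled the same way it is handled in Proposition~\mref{prop:leiswl} — the \complim property (Definition~\mref{defn:synchronized-limit}) is precisely designed so that the diagonal limit agrees with the iterated limit, which is what makes the differentiation-under-the-limit step valid — so the argument is a somewhat longer but structurally identical repetition of that proof, with the extra bookkeeping of the two-variable Leibniz expansion and the antisymmetrization in $u$ and $v$.
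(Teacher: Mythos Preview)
Your overall strategy --- differentiate the group Rota--Baxter identity \meqref{eq:relrbozero} at the identity and invoke Proposition~\mref{prop:leiswl} to justify passing derivatives through the $\lim_{n\to\infty}$ --- is exactly what the paper does. However, there is a genuine error in your extraction of the bracket on the left-hand side. The claim that $\left.\frac{\dd^2}{\dd s\,\dd t}\right|_{s,t=0} g(s)h(t)$ equals $[g'(0),h'(0)]$ is not a ``standard fact''; it is false. In a matrix group this mixed derivative gives the matrix product $g'(0)h'(0)$, not the commutator, and on an abstract Lie group the expression is not even well defined without a choice of chart. You partially acknowledge this by saying ``after anti-symmetrizing in $u,v$'' for the right-hand side, but you do not apply the same antisymmetrization to the left-hand side, and in any case antisymmetrizing group elements is not a well-defined operation without passing to an ambient linear space.

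The paper avoids this issue by starting instead from the conjugation expression
\[
[B(u),B(v)]_\frakg=\left.\dfrac{\dd^2}{\dd t\,\dd s}\right|_{t,s=0} e^{tB(u)}\cdot_G e^{sB(v)}\cdot_G e^{-tB(u)}
=\left.\dfrac{\dd^2}{\dd t\,\dd s}\right|_{t,s=0}\frakB(e^{tu})\cdot_G\frakB(e^{sv})\cdot_G\frakB(e^{-tu}),
\]
which genuinely computes the bracket. One then applies the Rota--Baxter identity \emph{twice} (first to $\frakB(e^{sv})\frakB(e^{-tu})$, then to the product with $\frakB(e^{tu})$), producing a three-factor expression $\mfrakL_{\frac{1}{n}}(e^{tu})\cdot_H\RE_{\frakB(e^{tu})}(\mfrakL_{\frac{1}{n}}(e^{sv}))\cdot_H\RE_{\frakB(e^{tu})\frakB(e^{sv})}(\mfrakL_{\frac{1}{n}}(e^{-tu}))$ inside $\frakB\circ\mfrakH_{\frac{1}{n}}$. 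The mixed second derivative of this triple product directly yields the three terms $\re_{B(u)}(\MfrakL_{\frac{1}{n}}(v))-\re_{B(v)}(\MfrakL_{\frac{1}{n}}(u))+[\MfrakL_{\frac{1}{n}}(u),\MfrakL_{\frac{1}{n}}(v)]_\frakh$ without any antisymmetrization step. Replacing your two-factor setup with this triple-product/conjugation setup fixes the argument; everything else in your outline (pulling $B$ and $\MfrakH_{\frac{1}{n}}$ outside, exchanging $\frac{\dd^2}{\dd t\,\dd s}$ with $\lim_{n\to\infty}$ via the \complim hypothesis, and citing Proposition~\mref{prop:leiswl} for the underlying structure on $\frakh$) is correct and matches the paper.
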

\begin{proof}
It follows from Proposition~\mref{prop:leiswl} and
{\small
\begin{align*}
\left [ B(u), B(v) \right ]_g
=&\  \frac{\mathrm{d}^2}{\mathrm{d}t \mathrm{d}s}\bigg|_{t,s=0} e^{tB(u)}\cdot_G e^{sB(v)}\cdot_G e^{-tB(u)}\\
=&\  \frac{\mathrm{d}^2}{\mathrm{d}t \mathrm{d}s}\bigg|_{t,s=0} \frakB(e^{tu})\cdot_G \frakB(e^{sv})\cdot_G \frakB(e^{-tu})\\
=&\  \frac{\mathrm{d}^2}{\mathrm{d}t \mathrm{d}s}\bigg|_{t,s=0} \frakB(e^{tu})\cdot_G \frakB\Bigg(\lim_{n\to\infty} \mfrakH_{\frac{1}{n}}\bigg(\mfrakL_{\frac{1}{n}}(e^{sv})\cdot_H \RE_{\frakB(e^{sv})}\Big(\mfrakL_{\frac{1}{n}}(e^{-tu}) \Big) \bigg) \Bigg)\\
=&\  \frac{\mathrm{d}^2}{\mathrm{d}t \mathrm{d}s}\bigg|_{t,s=0} \frakB\Bigg(\lim_{n\to\infty}\mfrakH_{\frac{1}{n}} \bigg( \mfrakL_{\frac{1}{n}}(e^{tu}) \cdot_H \RE_{\frakB(e^{tu})}\Big(\mfrakL_{\frac{1}{n}}(e^{sv}) \Big) \cdot_H  \RE_{\frakB(e^{tu})\frakB(e^{sv})}\Big(\mfrakL_{\frac{1}{n}}(e^{-tu}) \Big)\bigg)\Bigg)\\
=&\ \lim_{n\to\infty} \frac{\mathrm{d}^2}{\mathrm{d}t \mathrm{d}s}\bigg|_{t,s=0} \frakB\mfrakH_{\frac{1}{n}} \bigg( \mfrakL_{\frac{1}{n}}(e^{tu}) \cdot_H \RE_{\frakB(e^{tu})}\Big(\mfrakL_{\frac{1}{n}}(e^{sv}) \Big) \cdot_H  \RE_{\frakB(e^{tu})\frakB(e^{sv})}\Big(\mfrakL_{\frac{1}{n}}(e^{-tu}) \Big)\bigg)\\
=&\    B\Bigg(\lim_{n\to\infty} \MfrakH_{\frac{1}{n}}\bigg(\re_{B(u)}\Big(\MfrakL_{\frac{1}{n}}(v)\Big)-\re_{B(v)}\Big(\MfrakL_{\frac{1}{n}}(u)\Big)+[\MfrakL_{\frac{1}{n}}(u),\MfrakL_{\frac{1}{n}}(v)]_{\frakh}\bigg) \Bigg), \quad u,v\in \frakh. \qedhere
\end{align*}  }
\end{proof}

In~\mcite{LST2}, a relative Rota-Baxter operator with weight zero is defined by taking $(H,\cdot_{H})$ to be an abelian group in a relative Rota-Baxter operator (with weight one).
Now we give a concrete example of relative Rota-Baxter operators with weight zero in this sense, which is also an example of relative Rota-Baxter operators with limit-weight zero in Definition~\mref{defn:relrb}~\meqref{it:relrblimzero}.

Recall that all the differentiable maps $\mathbb{R}\rightarrow \GL(n,\mathbb{C})$, equipped with the multiplication $\cdot$ induced from the matrix multiplication, form a group $(C^{1}(\RR,\GL(n,\mathbb{C})),\cdot)$.
Similarly, all the maps $\mathbb{R}\rightarrow \gl(n,\mathbb{C})$ equipped with the addition $+$ and the scalar-multiplication induced from the matrix addition and the matrix scalar-multiplication form a linear space and hence an additive group $(C(\mathbb{R},\gl(n,\mathbb{C})),+)$.
Define a group action $$C^{1}(\mathbb{R},\GL(n,\mathbb{C}))\rightarrow \Aut\Big(C(\mathbb{R},\gl(n,\mathbb{C}))\Big),\quad a\mapsto (b\mapsto aba^{-1}).$$
Here $aba^{-1}:\RR\to \gl(n,\mathbb{C}), \, x\mapsto a(x)b(x)a(x)^{-1}$.

Let $I\in \gl(n,\mathbb{C})$ be the identity matrix. For each $u \in C(\RR,\gl(n,\mathbb{C})) $, the initial-value problem of the matrix differential equation
\begin{align}
\left\{ \begin{array}{ll}\frac{d}{dx}f(x)=\ u(x)f(x),& \\
\,\,\,\,\, f(0)=I,&
\end{array}
\right.
\mlabel{eq:inival}
\end{align}
has a unique solution $f_{u}\in C^{1}(\RR,\GL(n,\mathbb{C})).$ Thus we obtain a map
\begin{equation}
\frakS: C(\RR,\gl(n,\mathbb{C})) \to C^{1}(\RR,\GL(n,\mathbb{C})), \quad u\mapsto f_{u}.
\mlabel{eq:smap}
\end{equation}

\begin{prop}
In Definition~\mref{defn:relrb}, take $$(G,\cdot_{G}):=\Big(C^{1}(\RR,\GL(n,\mathbb{C})),\cdot\Big)\,\text{ and }\, (H,\cdot_{H}):=\big(C(\RR,\gl(n,\mathbb{C})),+\big).$$
\begin{enumerate}
	\item
\mlabel{it:rbzeroexam}
The map $\frakS$ defined in Eq.~\meqref{eq:smap} is a relative Rota-Baxter operator with weight zero, that is, for $u,v\in C(\RR,\gl(n,\mathbb{C}))$, we have
\begin{align}
\mlabel{eq:rbivp}
        \mathfrak{S}(u)\mathfrak{S}(v)=\mathfrak{S}
        \Big(u+\mathfrak{S}\big(u\big)v\mathfrak{S}\big(u\big)^{-1}\Big).
\end{align}
\item
Take $(\mfrakL_{\frac{1}{n}},\mfrakH_{\frac{1}{n}}):=(\frac{1}{n}\id_H,n\,\id_H), n\geq 1$.
Then $\frakS$ is a relative Rota-Baxter operator with limit-weight zero.
\mlabel{it:rblimitzeroexam}
\end{enumerate}
\mlabel{prop:dfbeez}
\end{prop}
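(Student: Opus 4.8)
The plan is to deduce part~\eqref{it:rbzeroexam} directly from the uniqueness of the solution to the linear matrix initial-value problem~\eqref{eq:inival}, and then to obtain part~\eqref{it:rblimitzeroexam} by substituting the chosen scaling maps $(\mfrakL_{\frac{1}{n}},\mfrakH_{\frac{1}{n}})=(\frac{1}{n}\id_H, n\,\id_H)$ into~\eqref{eq:relrbozero}, where the scaling factors cancel so that the limit is over a constant sequence and we land back on the identity~\eqref{eq:rbivp}.

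For part~\eqref{it:rbzeroexam}, write $f_u:=\frakS(u)$ and $f_v:=\frakS(v)$, and set $g:=f_uf_v$, the pointwise matrix product $x\mapsto f_u(x)f_v(x)$; then $g$ takes values in $\GL(n,\mathbb{C})$ and $g(0)=I$. Put $w:=u+\frakS(u)\,v\,\frakS(u)^{-1}$, which lies in $C(\RR,\gl(n,\mathbb{C}))$ because matrix inversion is continuous on $\GL(n,\mathbb{C})$ and $f_u$ is continuous with values there, so that $\frakS(w)$ is defined. Using the Leibniz rule together with $f_u'=uf_u$ and $f_v'=vf_v$, I would compute
\begin{equation*}
g'(x)=u(x)f_u(x)f_v(x)+f_u(x)v(x)f_v(x)=\big(u(x)+f_u(x)v(x)f_u(x)^{-1}\big)g(x)=w(x)g(x),
\end{equation*}
so that $g$ solves~\eqref{eq:inival} with $u$ replaced by $w$. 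By the asserted uniqueness, $g=f_w=\frakS(w)$, which is precisely Equation~\eqref{eq:rbivp}.

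For part~\eqref{it:rblimitzeroexam}, one first checks routinely that $(H,+)$ is an action-\complim $\lim_{n\to\infty}(\mfrakL_{\frac{1}{n}},\mfrakH_{\frac{1}{n}})$-group for the conjugation action: each $(\mfrakL_{\frac{1}{n}},\mfrakH_{\frac{1}{n}})$ is unital with $\mfrakL_{\frac{1}{n}}\mfrakH_{\frac{1}{n}}=\id_H$, and the expressions $n\big(\frac{1}{n}a+\frac{1}{n}b\big)=a+b$ and $n\,\RE_c\big(\frac{1}{n}b\big)=\RE_c(b)$ are constant in $n$, so the required limits exist and are \complim by continuity of addition and of conjugation. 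Since $\lim_{n\to\infty}\mfrakL_{\frac{1}{n}}(a)=\lim_{n\to\infty}\frac{1}{n}a=0=e_H$, any relative Rota-Baxter operator with this limit-weight automatically has \limwtzero in the sense of Definition~\ref{defn:relrb}\eqref{it:relrblimzero}. It then remains to verify~\eqref{eq:relrbozero}: for $a,b\in H$,
\begin{equation*}
\lim_{n\to\infty}\mfrakH_{\frac{1}{n}}\Big(\mfrakL_{\frac{1}{n}}(a)\cdot_{H}\RE_{\frakS(a)}\big(\mfrakL_{\frac{1}{n}}(b)\big)\Big)=\lim_{n\to\infty} n\Big(\tfrac{1}{n}a+\frakS(a)\,\tfrac{1}{n}b\,\frakS(a)^{-1}\Big)=a+\frakS(a)\,b\,\frakS(a)^{-1},
\end{equation*}
so the right-hand side of~\eqref{eq:relrbozero} equals $\frakS\big(a+\frakS(a)\,b\,\frakS(a)^{-1}\big)$, which is $\frakS(a)\frakS(b)$ by part~\eqref{it:rbzeroexam}.

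I do not expect a genuine obstacle here; the one place that needs care is the algebraic bookkeeping in part~\eqref{it:rbzeroexam}, namely recognizing the middle term $f_u(x)v(x)f_v(x)$ produced by the Leibniz rule as $\big(f_u(x)v(x)f_u(x)^{-1}\big)g(x)$, which is exactly what forces the coefficient on the right to be the conjugated perturbation $w=u+\frakS(u)v\frakS(u)^{-1}$ and nothing else. Once this is set up, everything reduces to the uniqueness theorem for the linear ODE~\eqref{eq:inival} quoted just before~\eqref{eq:smap} and to the trivial cancellation $\frac{1}{n}\cdot n=1$.
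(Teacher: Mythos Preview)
Your proposal is correct and follows essentially the same approach as the paper: part~\eqref{it:rbzeroexam} is proved by differentiating $\frakS(u)\frakS(v)$, recognizing it as a solution of the initial-value problem for $w=u+\frakS(u)v\frakS(u)^{-1}$, and invoking uniqueness; part~\eqref{it:rblimitzeroexam} is obtained by observing that the scaling $n\cdot\frac{1}{n}$ collapses the limit in~\eqref{eq:relrbozero} to~\eqref{eq:rbivp}. You supply slightly more detail than the paper in checking the action-\complim hypotheses, but the argument is otherwise identical.
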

\begin{proof}
\meqref{it:rbzeroexam} We just need to verify Eq.~\meqref{eq:rbivp}, as follows. First differentiating the left hand side, we obtain
\vspace{-.2cm}
\begin{align*}
 \frac{d}{dx}\mathfrak{S}(u)\mathfrak{S}(v)(x)&\ = \Big(u\mathfrak{S}(u)\mathfrak{S}(v)+\mathfrak{S}(u)v\mathfrak{S}(v)\Big)(x)\\
&\ = \Big(u\mathfrak{S}(u)\mathfrak{S}(v) +\mathfrak{S}(u)v\mathfrak{S}(u)^{-1}\mathfrak{S}(u)\mathfrak{S}(v)\Big)(x)\\
&\ =\Big((u+\mathfrak{S}(u)v\mathfrak{S}(u)^{-1})
\mathfrak{S}(u)\mathfrak{S}(v)\Big)(x).
\end{align*}
So $\mathfrak{S}(u)\mathfrak{S}(v)$ is a solution of the differential equation
\begin{align*}
\frac{d}{dx}h(x)=\Big((u +\mathfrak{S}(u)v
\mathfrak{S}(u)^{-1})h \Big)(x).
\end{align*}
By the uniqueness of the solution to Eq.~\meqref{eq:inival} with the given initial condition, we reach the conclusion.

\smallskip

\noindent
\meqref{it:rblimitzeroexam}
It is clear that $\lim\limits_{n\to\infty}\mfrakL_{\frac{1}{n}}(a)=e$ for $a \in G$.
Also Eq.~\meqref{eq:relrbozero} becomes
\vspace{-.2cm}
$$
\mathfrak{S}(u)\mathfrak{S}(v)=\mathfrak{S}\bigg(\lim_{n\to\infty}n
\bigg(\frac{1}{n}u+\mathfrak{S}(u)\frac{1}{n}v\mathfrak{S}(u)^{-1}\bigg)\bigg),
	\vspace{-.1cm}
$$
which simplifies to Eq.~\meqref{eq:rbivp}. Now the proof follows from Item~\meqref{it:rbzeroexam}.
\end{proof}

\begin{remark}
The relative Rota-Baxter operators with weight zero in~\mcite{LST2} mentioned above is different from the relative Rota-Baxter operators with limit-weight zero in Definition~\mref{defn:relrb}~\meqref{it:relrblimit}, where the group $(H,\cdot_{H})$ needs to be a $\lim\limits_{n\to\infty}(\mfrakL_{\frac{1}{n}},\mfrakH_{\frac{1}{n}})$-group. Under the assumption of  Proposition~\mref{prop:dfbeez}, it just happens that the two notions boil down to the same equation.
\end{remark}

An interpretation of Eq.~\meqref{eq:rbivp} is as follows. Suppose that we have found respectively solutions $\frakS(u)$ and $\frakS(v)$ to the initial-value problems of matrix differential equations
\vspace{-.2cm}
\[
\left\{ \begin{array}{ll}\frac{d}{dx}f(x)=\ u(x)f(x), \\
\,\,\,\,\, f(0)=I,
\end{array}
\right.
\text{and }\,\,\, \left\{ \begin{array}{ll}\frac{d}{dx}g(x)=\ v(x)g(x),& \\
\,\,\,\,\, g(0)=I,&
\end{array}
\right.
\]
Then we obtain a solution $\mathfrak{S}(u)\mathfrak{S}(v)$ to the initial-value problem
\[
\left\{ \begin{array}{ll}\frac{d}{dx}h(x)=\Big((u +\mathfrak{S}(u)v
\mathfrak{S}(u)^{-1})h \Big)(x), \\
\,\,\,\,\, h(0)=I.
\end{array}
\right.
\vspace{-.2cm}
\]
\begin{remark}
    We can get a group structure on $C(\mathbb{R},\gl(n, \mathbb{C}))$ by Theorem~\mref{thm:rbdes1}.
\end{remark}

\vspace{-.5cm}
\section{Limit-weighted Rota-Baxter groups, pre-groups and Yang-Baxter equation}
\mlabel{sec:ybe}
In this section we give a general notion of a limit-weighted post-group whose limit-abelian case gives a new notion of a pre-group. Taking the tangent space, these two notions give a limit-weighted post-Lie algebra and a pre-Lie algebra respectively. On the other hand, these two notions are derived from a limited weighted relative Rota-Baxter group and from such a group under the limit-abelian condition respectively.
Then these groups are applied to produce set-theoretic-solutions of the Yang-Baxter equation.

\subsection{Pre-groups}
\mlabel{ss:pregp}

Let us begin with the following concept.

\begin{defn}
	For a $\lim\limits_{n\to \infty}(\mfrakL_{\frac{1}{n}},\mfrakH_{\frac{1}{n}})$-group $G$, define
	\begin{align}
		\cdot_{(\mfrakL_{\frac{1}{\infty}},\mfrakH_{\frac{1}{\infty}})}:G\times G &\ \rightarrow G, \quad (a,b)\mapsto \lim_{n\rightarrow \infty} \mfrakH_{\frac{1}{n}}(\mfrakL_{\frac{1}{n}}(a)\mfrakL_{\frac{1}{n}}(b)),
		\mlabel{eq:transmul}
	\end{align}
	called the {\bf(limit) transported multiplication induced by $\lim\limits_{n\to\infty}(\mfrakL_{\frac{1}{n}},\mfrakH_{\frac{1}{n}})$}. Further, if $(G,\cdot_{(\mfrakL_{\frac{1}{\infty}},\mfrakH_{\frac{1}{\infty}})})$ is a group, we call $(G,\cdot_{(\mfrakL_{\frac{1}{\infty}},\mfrakH_{\frac{1}{\infty}})})$ the {\bf(limit) transported group induced by $\lim\limits_{n\to\infty}(\mfrakL_{\frac{1}{n}},\mfrakH_{\frac{1}{n}})$}.
	\mlabel{defn:tsgp}
\end{defn}

For simplicity, when $(\mfrakL_n,\mfrakH_n), n\geq 1,$ are bijective pairs and hence $\mfrakH_n=\mfrakL_n^{-1}$, we abbreviate $\cdot_{(\mfrakL_{\frac{1}{\infty}},\mfrakH_{\frac{1}{\infty}})}$ by $\cdot_{\mfrakL_{\frac{1}{\infty}}}$.
To ensure that $(G,\cdot_{\mfrakL_{\frac{1}{\infty}}})$ is a group, we introduce the following condition.
\begin{defn}
A $\lim\limits_{n\to\infty}(\mfrakL_{\frac{1}{n}},\mfrakH_{\frac{1}{n}})$-group $G$ is called {\bf complete} if
\[ \lim\limits_{n\to\infty}\mfrakH_{\frac{1}{n}}(e)
\,\text{ and }\, \lim\limits_{n\to\infty}\mfrakH_{\frac{1}{n}}(\mfrakL_{\frac{1}{n}}(a)^{-1}), \quad a\in G,
\]
uniquely exist.
\end{defn}

We expose a sufficient condition for a transported group.

\begin{lemma}
Let $G$ be a $\lim\limits_{n\to\infty}(\mfrakL_{\frac{1}{n}},\mfrakH_{\frac{1}{n}})$-group. Then $(G,\cdot_{\big(\mfrakL_{\frac{1}{\infty}},\mfrakH_{\frac{1}{\infty}}\big)})$ is a semigroup.
Further, if $G$ is a complete $\lim\limits_{n\to\infty}(\mfrakL_{\frac{1}{n}},\mfrakH_{\frac{1}{n}})$-group with
$(\mfrakL_{\frac{1}{n}},\mfrakH_{\frac{1}{n}}), n\geq 1,$ bijective, then $(G,\cdot_{\mfrakL_{\frac{1}{\infty}}})$ is a group.
\mlabel{lem:lgp1}
\end{lemma}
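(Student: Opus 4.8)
The plan is to treat this as the trivial-action instance of the descent-group construction of Theorem~\ref{thm:rbdes1}, carried out directly so that no Rota-Baxter operator intervenes. For the first assertion I would verify associativity of $\cdot := \cdot_{(\mfrakL_{\frac{1}{\infty}},\mfrakH_{\frac{1}{\infty}})}$ by the same computation as in the proof of Theorem~\ref{thm:rbdes1}(\ref{it:desgroup1}), with every occurrence of $\RE_{\frakB(\cdot)}$ replaced by $\id$: writing $(a\cdot b)\cdot c=\lim_{m}\mfrakH_{\frac{1}{m}}\big(\mfrakL_{\frac{1}{m}}(a\cdot b)\,\mfrakL_{\frac{1}{m}}(c)\big)$, expanding $a\cdot b$ as an $n$-limit, and invoking the \complim hypothesis~\eqref{eq:jx} (condition (c) in the definition of a $\lim_{n\to\infty}(\mfrakL_{\frac{1}{n}},\mfrakH_{\frac{1}{n}})$-group) to replace the iterated limit by the diagonal one $n=m$, so that $\mfrakL_{\frac{1}{n}}\mfrakH_{\frac{1}{n}}=\id$ collapses the inner layer and produces $\lim_{n}\mfrakH_{\frac{1}{n}}\big(\mfrakL_{\frac{1}{n}}(a)\mfrakL_{\frac{1}{n}}(b)\mfrakL_{\frac{1}{n}}(c)\big)$; the symmetric expansion of $a\cdot(b\cdot c)$ yields the same expression. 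That $\cdot$ is everywhere defined is part of the standing hypothesis on a $\lim_{n\to\infty}(\mfrakL_{\frac{1}{n}},\mfrakH_{\frac{1}{n}})$-group, so no further check is needed there.

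For the second assertion, assume $G$ is complete with each $(\mfrakL_{\frac{1}{n}},\mfrakH_{\frac{1}{n}})$ bijective. I would propose $\varepsilon:=\lim_{n}\mfrakH_{\frac{1}{n}}(e)$ as the unit and, for $a\in G$, $\bar a:=\lim_{n}\mfrakH_{\frac{1}{n}}\big(\mfrakL_{\frac{1}{n}}(a)^{-1}\big)$ as the inverse; both exist precisely by completeness. The identities $\varepsilon\cdot a=a\cdot\varepsilon=a$ and $\bar a\cdot a=a\cdot\bar a=\varepsilon$ are then checked exactly as the corresponding lines in the proof of Theorem~\ref{thm:rbdes1}(\ref{it:desgroup2}): expand the relevant factor as a limit, pass to the diagonal via~\eqref{eq:jx}, and use $\mfrakL_{\frac{1}{n}}\mfrakH_{\frac{1}{n}}=\id$ to simplify, e.g. $\mfrakL_{\frac{1}{n}}\big(\mfrakH_{\frac{1}{n}}(\mfrakL_{\frac{1}{n}}(a)^{-1})\big)=\mfrakL_{\frac{1}{n}}(a)^{-1}$, which reduces $\bar a\cdot a$ to $\lim_{n}\mfrakH_{\frac{1}{n}}(e)=\varepsilon$; the identity $\frakB$-free analogue of the opening step shows first that $\varepsilon$ is idempotent, hence the unit. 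Bijectivity of the pairs is what legitimizes applying these simplifications on both sides, so that one obtains a genuine two-sided unit and two-sided inverses rather than one-sided ones.

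The one point needing care is the legitimacy of every collapse of an iterated limit to a diagonal limit: each such step requires that the relevant inner sequence actually converge, which is supplied by the \complim condition together with the pointwise existence of the transported multiplication and of the limits $\lim_n\mfrakH_{\frac{1}{n}}(e)$ and $\lim_n\mfrakH_{\frac{1}{n}}(\mfrakL_{\frac{1}{n}}(a)^{-1})$. So the proof is essentially bookkeeping: specialize the computations of Theorem~\ref{thm:rbdes1} to the trivial action, and observe that completeness supplies exactly the two extra limits that the descent-group argument previously drew from the target of the Rota-Baxter operator. I expect no genuine obstacle beyond keeping the nested limits straight; the main temptation to resist is interchanging limits without appealing to~\eqref{eq:jx}.
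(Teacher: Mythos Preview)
Your proposal is correct and follows essentially the same route as the paper: verify associativity by expanding, passing to the diagonal via the \complim condition~\eqref{eq:jx}, and collapsing $\mfrakL_{\frac{1}{n}}\mfrakH_{\frac{1}{n}}=\id$; then take $\varepsilon=\lim_n\mfrakH_{\frac{1}{n}}(e)$ and $\bar a=\lim_n\mfrakH_{\frac{1}{n}}(\mfrakL_{\frac{1}{n}}(a)^{-1})$ and check the unit and inverse identities by the same diagonal trick. The only cosmetic difference is that the paper verifies just the right unit and right inverse (which suffices), and your remark ``idempotent, hence the unit'' is not quite right as stated---idempotency alone does not give a unit in a semigroup---but this is inessential since you separately check $a\cdot\varepsilon=a$.
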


\begin{proof}
	Let $a,b,c \in G$. By applying Eq.~\meqref{eq:jx} repeatedly, we verify the desired identities as follows:
{\small
	\begin{align*}
		(a\cdot_{\big(\mfrakL_{\frac{1}{\infty}},\mfrakH_{\frac{1}{\infty}}\big)}b)\cdot_{\big(\mfrakL_{\frac{1}{\infty}},\mfrakH_{\frac{1}{\infty}}\big)} c
		=&\   \lim_{\substack{n=m \to \infty}} \mfrakH_{\frac{1}{n}}\Bigg(\mfrakL_{\frac{1}{n}}\bigg(\mfrakH_{\frac{1}{m}}\Big(\mfrakL_{\frac{1}{m}}(a)\mfrakL_{\frac{1}{m}}(b)\Big)\bigg)\mfrakL_{\frac{1}{n}}(c)\Bigg)
		\\
		=&\ \lim_{\substack{n \to \infty}} \mfrakH_{\frac{1}{n}}\Big(\mfrakL_{\frac{1}{n}}(a)\mfrakL_{\frac{1}{n}}(b)\mfrakL_{\frac{1}{n}}(c)\Big) \\
		=&\ \lim_{\substack{n=m \to \infty}} \mfrakH_{\frac{1}{m}}\Bigg(\mfrakL_{\frac{1}{m}}(a)\mfrakL_{\frac{1}{m}}\bigg(\mfrakH_{\frac{1}{n}}\Big(\mfrakL_{\frac{1}{n}}(b)\mfrakL_{\frac{1}{n}}(c)\Big)\bigg)\Bigg)\\
		=&\  a\cdot_{\big(\mfrakL_{\frac{1}{\infty}},\mfrakH_{\frac{1}{\infty}}\big)}(b\cdot_{\big(\mfrakL_{\frac{1}{\infty}},\mfrakH_{\frac{1}{\infty}}\big)} c).
	\end{align*}
}
Thus $(G,\cdot_{\big(\mfrakL_{\frac{1}{\infty}},\mfrakH_{\frac{1}{\infty}}\big)})$ is a semigroup. Further, if $G$ is a complete $\lim\limits_{n\to\infty}(\mfrakL_{\frac{1}{n}},\mfrakH_{\frac{1}{n}})$-group with
	$(\mfrakL_{\frac{1}{n}},\mfrakH_{\frac{1}{n}})$ bijective for each $n\geq 1$, then
	\begin{align*}
		a\cdot_{\mfrakL_{\frac{1}{\infty}}}\lim\limits_{n\to\infty}\mfrakH_n(e)  =&\ \lim_{\substack{n=m \to \infty}}  \mfrakH_{\frac{1}{n}}\Big(\mfrakL_{\frac{1}{n}}(a)\mfrakL_{\frac{1}{m}}(\mfrakH_{\frac{1}{m}}(e))\Big)
		=\ \lim_{n\to\infty} \mfrakH_{\frac{1}{n}}(\mfrakL_{\frac{1}{n}}(a)) = a,\\
		a\cdot_{\mfrakL_{\frac{1}{\infty}}} \lim\limits_{n\to\infty}\mfrakH_{\frac{1}{n}}(\mfrakL_{\frac{1}{n}}(a)^{-1}) =&\
		\lim_{\substack{n=m \to \infty}} \mfrakH_{\frac{1}{m}}\Bigg(\mfrakL_{\frac{1}{m}}(a)\mfrakL_{\frac{1}{m}}\bigg(\mfrakH_{\frac{1}{n}}\Big(\mfrakL_{\frac{1}{n}}(a)^{-1}\Big)\bigg)\Bigg)=\lim_{n\to \infty}\mfrakH_{\frac{1}{n}}(e). \hspace{1.5cm} \qedhere
	\end{align*}
\end{proof}

To give an example of a $\lim\limits_{n\to\infty}(\mfrakL_{\frac{1}{n}},\mfrakH_{\frac{1}{n}})$-group that has a non-isomorphic transported group, we recall the following result.

\begin{exam} \cite[Corollary 3.12]{GGH}
	\mlabel{coro:0mult}
Let $G$ be a complex matrix Lie group with a complex matrix Lie algebra $\frakg$ and with $\exp:\frakg \rightarrow G$ bijective. Define
\begin{equation}
\mfrakH_{\frac{1}{n}}:=\pown:G \rightarrow G, \quad a\mapsto a^{n},
\mlabel{eq:powern}
\end{equation}
and thus $\mfrakL_{\frac{1}{n}}=\pown^{-1}$. Then $G$ is a complete  $\lim\limits_{n\to \infty}(\pown^{-1},\pown)$-group with the multiplication given by
	\begin{equation*}
		\exp(X)\cdot_{\frac{1}{\infty}}\exp(Y)=\exp(X+Y), \quad \exp(X),\exp(Y)\in G.
		\mlabel{eq:xinfy}
	\end{equation*}
\end{exam}

\begin{remark}
	When $(\mfrakL_n,\mfrakH_n)$ is a bijective pair for each $n \geq 1$, it is in general not true that $G$ and the transported group $(G,\cdot_{\mfrakL_{\frac{1}{\infty}}})$ induced by $\lim\limits_{n\to\infty}(\mfrakL_{\frac{1}{n}},\mfrakH_{\frac{1}{n}})$ are isomorphic. Example~\mref{coro:0mult} gives such a counterexample, where the group $G$ is not commutative but $(G,\cdot_{\mfrakL_{\frac{1}{\infty}}})$ is commutative. Hence the two groups cannot be isomorphic.
	\mlabel{lem:lgp0}
\end{remark}

Next we use the transported group from a limit-weighted Rota-Baxter operator on a group to give a relative Rota-Baxter operator.

\begin{theorem}
	Let $(G,\cdot,\frakB)$ be a Rota-Baxter group with limit-weight $\lim\limits_{n\to \infty}(\mfrakL_{\frac{1}{n}},\mfrakH_{\frac{1}{n}})$, with $(\mfrakL_{\frac{1}{n}},\mfrakH_{\frac{1}{n}})$ bijective, and let $(G,\cdot_{\mfrakL_{\frac{1}{\infty}}})$ be the transported group induced by $\lim\limits_{n\to\infty}(\mfrakL_{\frac{1}{n}},\mfrakH_{\frac{1}{n}})$. Then there is a group action $\RE$ define by
\begin{align}
\RE:	(G,\cdot)  \rightarrow \Aut\big(G,\cdot_{\mfrakL_{\frac{1}{\infty}}}\big),\quad a\mapsto   \RE_a\, \text{ with } \, \RE_{a}b:=\lim\limits_{n\to\infty}\mfrakH_{\frac{1}{n}}(a\mfrakL_{\frac{1}{n}}(b) a^{-1})\,\text{ for }\, b\in G, \mlabel{eq:gpaction}
\end{align}
    and $\frakB$ is a relative Rota-Baxter operator
    on $(G,\cdot)$ for the group action $\RE$.
\mlabel{thm:tsre}
\end{theorem}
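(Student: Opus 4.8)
The plan is to verify the two claims in turn: that the map $\RE$ of \meqref{eq:gpaction} is a well-defined action of $(G,\cdot)$ on $(G,\cdot_{\mfrakL_{\frac1\infty}})$ by group automorphisms, and that $\frakB$ then satisfies the relative Rota-Baxter equation \meqref{eq:relrbozero} for this action. The only tool needed is the \complim property \meqref{eq:jx}, applied repeatedly to replace an iterated limit (whose inner limit has the shape $\lim_m\mfrakH_{\frac1m}(\cdots)$) by a single limit along the diagonal, together with the identities $\mfrakL_{\frac1n}\mfrakH_{\frac1n}=\mfrakH_{\frac1n}\mfrakL_{\frac1n}=\id$, valid because the pairs $(\mfrakL_{\frac1n},\mfrakH_{\frac1n})$ are bijective. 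Two facts from the hypotheses will be used without comment: since $(G,\cdot,\frakB)$ is a Rota-Baxter group with limit-weight, $G$ carries the structure of an action-\complim $\lim_{n\to\infty}(\mfrakL_{\frac1n},\mfrakH_{\frac1n})$-group for its adjoint action (equivalently, $\frakB$ is a relative operator for that action, cf.\ the Example after Definition~\mref{defn:relrb} and \mcite{GGH}), so the limit defining $\RE_a(b)$ exists and feeding in a sequence $b_n\to b$ does not change its value; and, by Lemma~\mref{lem:lgp1}, $\cdot_{\mfrakL_{\frac1\infty}}$ makes $G$ a group, so any limit that coincides with a $\cdot_{\mfrakL_{\frac1\infty}}$-product exists automatically. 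Since $(G,\cdot_{\mfrakL_{\frac1\infty}})$ is trivially an action-\complim $\lim_{n\to\infty}(\id,\id)$-group for $\RE$, the instance of \meqref{eq:relrbozero} to be verified reads $\frakB(a)\cdot\frakB(b)=\frakB\big(a\cdot_{\mfrakL_{\frac1\infty}}\RE_{\frakB(a)}(b)\big)$ for $a,b\in G$.

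For the action, I would check, in order: (i) $\RE_e=\id$, which is immediate from $e\,\mfrakL_{\frac1n}(b)\,e=\mfrakL_{\frac1n}(b)$ and $\mfrakH_{\frac1n}\mfrakL_{\frac1n}=\id$; (ii) $\RE_{aa'}=\RE_a\circ\RE_{a'}$: write $\RE_a(\RE_{a'}(b))=\lim_n\mfrakH_{\frac1n}\big(a\,\mfrakL_{\frac1n}(\RE_{a'}(b))\,a^{-1}\big)$, substitute $\RE_{a'}(b)=\lim_m\mfrakH_{\frac1m}\big(a'\mfrakL_{\frac1m}(b)(a')^{-1}\big)$, collapse to the diagonal by \meqref{eq:jx} (legitimate by the action-\complim property for the adjoint action), cancel $\mfrakL_{\frac1n}\mfrakH_{\frac1n}$, and use $a\big(a'x(a')^{-1}\big)a^{-1}=(aa')x(aa')^{-1}$; it follows that each $\RE_a$ is bijective with inverse $\RE_{a^{-1}}$; (iii) each $\RE_a$ is an endomorphism of $(G,\cdot_{\mfrakL_{\frac1\infty}})$: expanding both $\RE_a(b\cdot_{\mfrakL_{\frac1\infty}}c)$ and $\RE_a(b)\cdot_{\mfrakL_{\frac1\infty}}\RE_a(c)$ as iterated limits and collapsing each along the diagonal via \meqref{eq:jx}, both reduce to $\lim_n\mfrakH_{\frac1n}\big(a\mfrakL_{\frac1n}(b)a^{-1}\cdot a\mfrakL_{\frac1n}(c)a^{-1}\big)$, where for the first expression one uses $a\,\mfrakL_{\frac1n}(b)\mfrakL_{\frac1n}(c)\,a^{-1}=a\mfrakL_{\frac1n}(b)a^{-1}\cdot a\mfrakL_{\frac1n}(c)a^{-1}$. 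Thus $\RE_a\in\Aut(G,\cdot_{\mfrakL_{\frac1\infty}})$ and $\RE$ is a group action.

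For the relative Rota-Baxter identity I would compute the right-hand side directly. By \meqref{eq:transmul},
\[
a\cdot_{\mfrakL_{\frac1\infty}}\RE_{\frakB(a)}(b)=\lim_{n\to\infty}\mfrakH_{\frac1n}\Big(\mfrakL_{\frac1n}(a)\,\mfrakL_{\frac1n}\big(\RE_{\frakB(a)}(b)\big)\Big).
\]
Substituting $\RE_{\frakB(a)}(b)=\lim_m\mfrakH_{\frac1m}\big(\frakB(a)\mfrakL_{\frac1m}(b)\frakB(a)^{-1}\big)$, collapsing to the diagonal by \meqref{eq:jx}, and cancelling $\mfrakL_{\frac1n}\mfrakH_{\frac1n}=\id$, this becomes $\lim_{n\to\infty}\mfrakH_{\frac1n}\big(\mfrakL_{\frac1n}(a)\,\frakB(a)\mfrakL_{\frac1n}(b)\frakB(a)^{-1}\big)$. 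Applying $\frakB$ and invoking the defining identity of a Rota-Baxter operator with limit-weight — which is precisely \meqref{eq:relrbozero} for the adjoint action $\RE_{\frakB(a)}(x)=\frakB(a)x\frakB(a)^{-1}$ — the right-hand side equals $\frakB(a)\cdot\frakB(b)$, as required.

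Every computation above has the same shape, so the step I expect to demand the most care, though not new ideas, is the bookkeeping of nested limits: at each use of \meqref{eq:jx} one must confirm that the inner sequence being substituted converges to the element obtained by first taking its own limit, and that the outer limit still exists. These confirmations are supplied by the standing hypotheses: the \complim property of $\lim_n\mfrakH_{\frac1n}(\mfrakL_{\frac1n}\cdot\mfrakL_{\frac1n})$ handles the transported-multiplication slot, the action-\complim property of $G$ for the adjoint action handles the conjugation slots, and completeness together with bijectivity — needed so that $(G,\cdot_{\mfrakL_{\frac1\infty}})$ is a group (Lemma~\mref{lem:lgp1}) — guarantees existence of the limits that occur as values of $\cdot_{\mfrakL_{\frac1\infty}}$-products. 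Nothing beyond these routine verifications is needed.
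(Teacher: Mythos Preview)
Your proposal is correct and follows essentially the same approach as the paper's proof: both verify that $\RE$ is an action by automorphisms of $(G,\cdot_{\mfrakL_{\frac1\infty}})$ via the same diagonal-collapse computations using Eq.~\meqref{eq:jx}, and both reduce the relative Rota-Baxter identity to the original (adjoint) limit-weighted identity by expanding $a\cdot_{\mfrakL_{\frac1\infty}}\RE_{\frakB(a)}(b)$ to $\lim_n\mfrakH_{\frac1n}\big(\mfrakL_{\frac1n}(a)\,\frakB(a)\mfrakL_{\frac1n}(b)\frakB(a)^{-1}\big)$. Your write-up is slightly more explicit (checking $\RE_e=\id$, noting bijectivity of $\RE_a$ via $\RE_{a^{-1}}$, and articulating the $(\id,\id)$-weight interpretation on the transported group), but the argument is the same.
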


\begin{proof} The effect of $\RE$ is a group action of $(G,\cdot)$ on $(G,\cdot_{\mfrakL_{\frac{1}{\infty}}})$ because
{\small
	\begin{align*}
		(\RE_{a}b)\cdot_{\mfrakL_{\frac{1}{\infty}}}(\RE_{a}c)&=\Big(\lim_{n\to\infty}\mfrakH_{\frac{1}{n}}(a\mfrakL_{\frac{1}{n}}(b) a^{-1})\Big)\cdot_{\mfrakL_{\frac{1}{\infty}}}\Big(\lim_{n\to\infty}\mfrakH_{\frac{1}{n}}(a\mfrakL_{\frac{1}{n}}(c) a^{-1})\Big)\\
		&=\lim_{n\to \infty}\mfrakH_{\frac{1}{n}}(\mfrakH_{\frac{1}{n}}(a\mfrakL_{\frac{1}{n}}(b) a^{-1}))\mfrakH_{\frac{1}{n}}(\mfrakH_{\frac{1}{n}}(a\mfrakL_{\frac{1}{n}}(x) a^{-1}))\ \ \text{(by Eq.~\meqref{eq:jx})}\\
		&=\lim_{n\to\infty}\mfrakH_{\frac{1}{n}}(a\mfrakL_{\frac{1}{n}}(b)\mfrakL(c)_{\frac{1}{n}} a^{-1})=\lim_{n\to\infty}\mfrakH_{\frac{1}{n}}(a\mfrakL_{\frac{1}{n}}(b\cdot_{\mfrakL_{\frac{1}{\infty}}}c)a^{-1})\\
		&=\RE_{a}(b\cdot_{\mfrakL_{\frac{1}{\infty}}}c)
	\end{align*}}
	and
 {\small
	\begin{align*}
		\RE_{a}(\RE_{b}c)&=\lim_{n\to\infty}\mfrakH_{\frac{1}{n}}(a\mfrakL_{\frac{1}{n}}(\mfrakH_{\frac{1}{n}}(b\mfrakL_{\frac{1}{n}}(c)b^{-1}))a^{-1})=\lim_{n\to\infty}\mfrakH_{\frac{1}{n}}(ab\mfrakL_{\frac{1}{n}}(c)(ab)^{-1}) = \RE_{ab}c.
	\end{align*}}
	Moreover, it follows from Eqs.~(\ref{eq:jx}) and~(\ref{eq:gpaction}) that
	\begin{align*}
		\frakB(a)\frakB(b)=& \ \frakB\Big(a\cdot_{\mfrakL_{\frac{1}{\infty}}}(\RE_{\frakB(a)}b) \Big)
		= \frakB\Big(\lim_{n\to\infty}\mfrakH_{\frac{1}{n}}
		(\mfrakL_{\frac{1}{n}}(a)\frakB(a)\mfrakL_{\frac{1}{n}}(b)\frakB(b)^{-1}) \Big). \qedhere
	\end{align*}
\end{proof}

Now we can use the transported group to give a new definition of a pre-group.
First recall (see \mcite{Bai} for example) that a {\bf (left) pre-Lie algebra} is a vector space $A$ endowed with a binary bilinear operation $\xsj$ satisfying
\begin{equation*}
	u \xsj(v\xsj w)-(u\xsj v)\xsj w  = v\xsj(u\xsj w) -(v\xsj u)\xsj w, \quad  u,v,w\in A.
	\mlabel{preLie1}
\end{equation*}
Also recall that $\xsj$ yields a Lie bracket on $A$ given by
\begin{equation}
	[u,v] :=u\xsj v- v\xsj u. \mlabel{preLie2}
\end{equation}

Now we give a notion of limit-weighted post-Lie algebras.
\begin{defn}
	\mlabel{defn:limpost}
	Let $\frakg$ be a $\lim\limits_{n\to\infty}(\MfrakL_{\frac{1}{n}},\MfrakH_{\frac{1}{n}})$-Lie algebra.
\begin{enumerate}
\item If
\begin{equation*}			\lim_{n\to\infty}\MfrakH_{\frac{1}{n}}\Big([\MfrakL_{\frac{1}{n}}(u),
\MfrakL_{\frac{1}{n}}(v)]\Big)=0, \quad u,v\in \frakg,
\end{equation*}
then $\frakg$ is called a {\bf  $\lim\limits_{n\to\infty}(\MfrakL_{\frac{1}{n}},\MfrakH_{\frac{1}{n}})$-abelian Lie algebra} or simply a {\bf limit-abelian Lie algebra}.
		
\item Let $\xsj:\frakg\otimes\frakg\rightarrow\frakg$ be  a multiplication such that for $u,v,w\in\frakg$, there are
\begin{equation}			u\xsj\lim_{n\to\infty}\MfrakH_{\frac{1}{n}}\Big([\MfrakL_{\frac{1}{n}}(v),\MfrakL_{\frac{1}{n}}(w)])\Big)=\lim_{n\to\infty}\MfrakH_{\frac{1}{n}}\Big([\MfrakL_{\frac{1}{n}}(u\xsj v),\MfrakL_{\frac{1}{n}}(w)]\Big)+\lim_{n\to\infty}\MfrakH_{\frac{1}{n}}\Big([\MfrakL_{\frac{1}{n}}(v),\MfrakL_{\frac{1}{n}}(u\xsj w)]\Big)
\mlabel{eq:postdk}
\end{equation}
and
\begin{equation}			\Big(\lim_{n\to\infty}\MfrakH_{\frac{1}{n}}\Big([\MfrakL_{\frac{1}{n}}(u),\MfrakL_{\frac{1}{n}}(v)]\Big)+u\xsj v-v\xsj u\Big)\xsj w= u \xsj(v\xsj w)-v\xsj(u\xsj w ).
\mlabel{eq:postlimit}
\end{equation}
		Then we call $(\frakg,[\cdot,\cdot],\xsj)$ a {\bf post-Lie algebra with limit-weight $\lim\limits_{n\to\infty}(\MfrakL_{\frac{1}{n}},\MfrakH_{\frac{1}{n}})$}.
	\end{enumerate}
\end{defn}

\begin{remark}
	\mlabel{rk:post-pre}
\begin{enumerate}
\item Taking $\MfrakL_{\frac{1}{n}}=\MfrakH_{\frac{1}{n}}=\id_\frakg$ in Definition~\mref{defn:limpost} recovers the notion of a post-Lie algebra.
\item From the definition, a post-Lie algebra with limit-weight $\lim\limits_{n\to\infty}(\MfrakL_{\frac{1}{n}},\MfrakH_{\frac{1}{n}})$ that is
$\lim\limits_{n\to\infty}(\MfrakL_{\frac{1}{n}},\MfrakH_{\frac{1}{n}})$-abelian is a pre-Lie algebra.
\end{enumerate}
\end{remark}

Recall that a Rota-Baxter Lie algebra with weight one gives rise to a post-Lie algebra; while a Rota-Baxter Lie algebra with weight zero gives rise to a pre-Lie algebra\mcite{Ag,BGN}. Here we prove a similar result for Rota-Baxter operators with limit-weights.

\begin{prop}
Let $(\frakg,[\cdot,\cdot])$ be an action-\complim $\lim\limits_{n\to\infty}(\MfrakL_{\frac{1}{n}},\MfrakH_{\frac{1}{n}})$-Lie algebra for a continuous action $\re:\frakg \rightarrow\Der(\frakg)$. Let $B$ be a relative Rota-Baxter operator with limit-weight $\lim\limits_{n\to\infty}(\MfrakL_{\frac{1}{n}},\MfrakH_{\frac{1}{n}})$ on $\frakg$. Define
	$$u\xsj v:=\lim\limits_{n\to\infty}\MfrakH_{\frac{1}{n}}\Big(\re_{B(u)}(\MfrakL_{\frac{1}{n}}(v))\Big), \quad u, v\in \frakg.$$
	Then $(\frakg,[\cdot,\cdot],\xsj)$ is a post-Lie algebra with limit-weight $\lim\limits_{n\to\infty}(\MfrakL_{\frac{1}{n}},\MfrakH_{\frac{1}{n}})$. If in addition, $\frakg$ is $\lim\limits_{n\to\infty}(\MfrakL_{\frac{1}{n}},\MfrakH_{\frac{1}{n}})$-abelian, then $(\frakg,[\cdot,\cdot],\xsj)$ is a pre-Lie algebra.
	\mlabel{prop:rbpre}
\end{prop}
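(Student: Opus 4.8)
The plan is to verify that $\xsj$ is bilinear and satisfies the two defining identities \eqref{eq:postdk} and \eqref{eq:postlimit} of a limit-weighted post-Lie algebra, and then to read off the pre-Lie case. Bilinearity of $\xsj$ in the second variable is immediate from the linearity of $\MfrakL_{\frac{1}{n}}$, $\re_{B(u)}$, $\MfrakH_{\frac{1}{n}}$ and of the limit, and in the first variable it follows additionally from the linearity of $B$ and of $\re$; the limit defining $u\xsj v$ exists by the action-synchronized hypothesis on $\frakg$. The one preliminary fact I would record first is obtained by applying linearity of $\MfrakH_{\frac{1}{n}}$ to the relative Rota-Baxter relation of Definition~\ref{defn:relie} and splitting off the three limits, each of which converges by the synchronized, resp. action-synchronized, hypotheses:
\begin{equation*}
  [B(u),B(v)] = B\big(u\xsj v - v\xsj u + \langle u,v\rangle\big), \qquad \langle u,v\rangle := \lim_{n\to\infty}\MfrakH_{\frac{1}{n}}\big([\MfrakL_{\frac{1}{n}}(u),\MfrakL_{\frac{1}{n}}(v)]\big).
\end{equation*}

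For \eqref{eq:postdk}, I would rewrite the left side $u\xsj\langle v,w\rangle$ by applying the action-synchronized property to the convergent sequence $b_n := \MfrakH_{\frac{1}{n}}([\MfrakL_{\frac{1}{n}}(v),\MfrakL_{\frac{1}{n}}(w)])\to\langle v,w\rangle$ and then cancelling $\MfrakL_{\frac{1}{n}}\MfrakH_{\frac{1}{n}}=\id$, obtaining $\lim_n\MfrakH_{\frac{1}{n}}\big(\re_{B(u)}([\MfrakL_{\frac{1}{n}}(v),\MfrakL_{\frac{1}{n}}(w)])\big)$; since $\re_{B(u)}\in\Der(\frakg)$ this splits as $\lim_n\MfrakH_{\frac{1}{n}}([\re_{B(u)}\MfrakL_{\frac{1}{n}}(v),\MfrakL_{\frac{1}{n}}(w)]) + \lim_n\MfrakH_{\frac{1}{n}}([\MfrakL_{\frac{1}{n}}(v),\re_{B(u)}\MfrakL_{\frac{1}{n}}(w)])$. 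On the right side, the synchronized property of the transported bracket applied to the sequences $\MfrakH_{\frac{1}{n}}(\re_{B(u)}\MfrakL_{\frac{1}{n}}(v))\to u\xsj v$ and $\MfrakH_{\frac{1}{n}}(\re_{B(u)}\MfrakL_{\frac{1}{n}}(w))\to u\xsj w$ (with the other slot held constant), together again with $\MfrakL_{\frac{1}{n}}\MfrakH_{\frac{1}{n}}=\id$, identifies $\langle u\xsj v,w\rangle$ and $\langle v,u\xsj w\rangle$ with exactly those two limits, so the two sides coincide.

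For \eqref{eq:postlimit}, the preliminary fact lets me write $\langle u,v\rangle + u\xsj v - v\xsj u = X$ with $B(X)=[B(u),B(v)]$; since $\xsj$ in the first slot depends on its argument only through $B$, the left side is $X\xsj w = \lim_n\MfrakH_{\frac{1}{n}}(\re_{[B(u),B(v)]}\MfrakL_{\frac{1}{n}}(w))$. As $\re$ is a Lie algebra homomorphism, $\re_{[B(u),B(v)]}=\re_{B(u)}\re_{B(v)}-\re_{B(v)}\re_{B(u)}$ as operators, so this equals $\lim_n\MfrakH_{\frac{1}{n}}(\re_{B(u)}\re_{B(v)}\MfrakL_{\frac{1}{n}}(w)) - \lim_n\MfrakH_{\frac{1}{n}}(\re_{B(v)}\re_{B(u)}\MfrakL_{\frac{1}{n}}(w))$. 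On the right side, applying the action-synchronized property to $\MfrakH_{\frac{1}{n}}(\re_{B(v)}\MfrakL_{\frac{1}{n}}(w))\to v\xsj w$ and cancelling $\MfrakL_{\frac{1}{n}}\MfrakH_{\frac{1}{n}}=\id$ gives $u\xsj(v\xsj w)=\lim_n\MfrakH_{\frac{1}{n}}(\re_{B(u)}\re_{B(v)}\MfrakL_{\frac{1}{n}}(w))$, and symmetrically for $v\xsj(u\xsj w)$, so the two sides agree again. Hence $(\frakg,[\cdot,\cdot],\xsj)$ is a limit-weighted post-Lie algebra; if moreover $\frakg$ is $\lim_{n\to\infty}(\MfrakL_{\frac{1}{n}},\MfrakH_{\frac{1}{n}})$-abelian then $\langle u,v\rangle=0$ identically, so it is a limit-abelian such algebra, hence a pre-Lie algebra by the second part of Remark~\ref{rk:post-pre} (equivalently, \eqref{eq:postlimit} then collapses directly to the left pre-Lie identity).

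The step I expect to be the main obstacle is purely the bookkeeping of the synchronized-limit substitutions: each time a $\xsj$-value or a transported-bracket value is fed back into the $n$-th term of another limit, one must check that the substituting sequence genuinely converges to the intended element, invoke the matching hypothesis — the synchronized property of $(x,y)\mapsto\lim_n\MfrakH_{\frac{1}{n}}([\MfrakL_{\frac{1}{n}}(x),\MfrakL_{\frac{1}{n}}(y)])$ for bracket slots and the action-synchronized property for the $\re_{B(-)}$ slots — and use the continuity of the linear maps $\MfrakL_{\frac{1}{n}}$ and $\re_{B(u)}$, while also observing a posteriori that the sums of limits split because each summand is itself a value of $\xsj$ or of the transported bracket and hence already known to converge. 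This is routine and parallels the computations in the proofs of Theorems~\ref{thm:rbdes1} and~\ref{thm:tsre}, but it must be carried out consistently.
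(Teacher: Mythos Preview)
Your proposal is correct and follows essentially the same route as the paper: both proofs verify \eqref{eq:postdk} by pushing the action-synchronized limit into the derivation $\re_{B(u)}$ and then splitting via the Leibniz rule, and verify \eqref{eq:postlimit} by reducing to $\re_{[B(u),B(v)]}=\re_{B(u)}\re_{B(v)}-\re_{B(v)}\re_{B(u)}$; the pre-Lie conclusion is identical via Remark~\ref{rk:post-pre}. Your treatment of \eqref{eq:postlimit} is in fact slightly cleaner than the paper's: by first isolating the ``preliminary fact'' $[B(u),B(v)]=B(\langle u,v\rangle+u\xsj v-v\xsj u)$ and then observing that $X\xsj w$ depends on $X$ only through $B(X)$, you pass directly to $\re_{[B(u),B(v)]}$ without any synchronized-limit substitution in the first slot of $\re$, whereas the paper substitutes the $n$-th approximants into the subscript of $\re$ and then appeals to the action-synchronized property together with continuity of $\re$ to pull the inner limit back out.
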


\begin{proof}
	The first statement follows from
	{\small\begin{align*}
			&\ u\xsj\lim_{n\to\infty}\MfrakH_{\frac{1}{n}}\Big([\MfrakL_{\frac{1}{n}}(v),\MfrakL_{\frac{1}{n}}(w)]\Big)=\lim_{n=m\to\infty} \MfrakH_{\frac{1}{m}}\Bigg(\re_{B(u)}\bigg(\MfrakL_{\frac{1}{m}}\bigg(\MfrakH_{\frac{1}{n}}\Big([\MfrakL_{\frac{1}{n}}(v),\MfrakL_{\frac{1}{n}}(w)]\Big)\bigg)\Bigg)
			\quad\text{(by Eq.~\meqref{eq:jx})}\\
			=&\  \lim_{n\to\infty} \MfrakH_{\frac{1}{n}}\Bigg(\re_{B(u)}\Big([\MfrakL_{\frac{1}{n}}(v),\MfrakL_{\frac{1}{n}}(w)]\Big)\Bigg)\\
			=&\  \lim_{n\to\infty} \MfrakH_{\frac{1}{n}}\Bigg(\Big[\re_{B(u)}\Big(\MfrakL_{\frac{1}{n}}(v)\Big),\MfrakL_{\frac{1}{n}}(w)]\Big]+\Big[\MfrakL_{\frac{1}{n}}(v),\re_{B(u)}\Big(\MfrakL_{\frac{1}{n}}(w)]\Big)\Bigg)\\
			=&\  \lim_{n=m\to\infty} \MfrakH_{\frac{1}{n}}\Bigg(\Big[\MfrakL_{\frac{1}{n}}\MfrakH_{\frac{1}{m}}\Big(\re_{B(u)}(\MfrakL_{\frac{1}{m}}(v))\Big),\MfrakL_{\frac{1}{n}}(w)]\Big]+\Big[\MfrakL_{\frac{1}{n}}(v),\MfrakL_{\frac{1}{n}}\MfrakH_{\frac{1}{m}}(\re_{B(u)}(\MfrakL_{\frac{1}{m}}(w))\Big]\Bigg)\quad\text{(by Eq.~\meqref{eq:jx})}\\
			=&\ \lim_{n\to\infty}\MfrakH_{\frac{1}{n}}\Big([\MfrakL_{\frac{1}{n}}(u\xsj v),\MfrakL_{\frac{1}{n}}(w)]\Big)+\lim_{n\to\infty}\MfrakH_{\frac{1}{n}}\Big([\MfrakL_{\frac{1}{n}}(v),\MfrakL_{\frac{1}{n}}(u\xsj w)]\Big),
		\end{align*}
	}
	and
	{\small
		\begin{align*}
			&\ \bigg(\lim_{n\to\infty}\MfrakH_{\frac{1}{n}}\Big([\MfrakL_{\frac{1}{n}}(u),\MfrakL_{\frac{1}{n}}(v)]\Big)+u\xsj v-v\xsj u\bigg)\xsj w\\
			= &\ \lim_{n\to\infty} \MfrakH_{\frac{1}{n}}\bigg(\re_{B\Big(\MfrakH_{\frac{1}{n}}\big([\MfrakL_{\frac{1}{n}}(u),\MfrakL_{\frac{1}{n}}(v)]\big)+\MfrakH_{\frac{1}{n}}\big(\re_{B(u)}(\MfrakL_{\frac{1}{n}}(v))\big)-\MfrakH_{\frac{1}{n}}\big(\re_{B(v)}(\MfrakL_{\frac{1}{n}}(u))\big)\Big)} \MfrakL_{\frac{1}{n}}(w)\bigg)\\
			= &\ \lim_{m\to\infty} \MfrakH_{\frac{1}{n}}\bigg(\re_{B\Big(\lim\limits_{n\to\infty}\Big(\MfrakH_{\frac{1}{n}}\big([\MfrakL_{\frac{1}{n}}(u),\MfrakL_{\frac{1}{n}}(v)]\big)+\MfrakH_{\frac{1}{n}}\big(\re_{B(u)}(\MfrakL_{\frac{1}{n}}(v))\big)-\MfrakH_{\frac{1}{n}}\big(\re_{B(v)}(\MfrakL_{\frac{1}{n}}(u))\big)\Big)\Big)} \MfrakL_{\frac{1}{m}}(w)\bigg)\\
			& \hspace{5cm} \text{(by the limit-action-\complim property)}\\
			=&\ \lim_{m\to\infty} \MfrakH_{\frac{1}{m}}\Bigg(\re_{[B(u),B(v)]} \Big(\MfrakL_{\frac{1}{m}}(w)\Big)\Bigg)\\
			=&\ \lim_{m\to\infty} \MfrakH_{\frac{1}{m}}\Bigg(\re_{B(u)}\bigg(\re_{B(v)}\Big(\MfrakL_{\frac{1}{m}}(w)\Big) \bigg)-\re_{B(v)}\bigg(\re_{B(u)}\Big(\MfrakL_{\frac{1}{m}}(w)\Big) \bigg)\Bigg)\\
			=&\ \lim_{m=n\to\infty} \MfrakH_{\frac{1}{m}}\Bigg(\re_{B(u)}\Big(\MfrakL_{\frac{1}{m}}\MfrakH_{\frac{1}{n}}\Big(\re_{B(v)}(\MfrakL_{\frac{1}{n}}(w))\Big) \Big)-\re_{B(v)}\Big(\MfrakL_{\frac{1}{m}}\MfrakH_{\frac{1}{n}}\Big(\re_{B(u)}(\MfrakL_{\frac{1}{n}}(w))\Big) \Big)\Bigg)\\
			=&\ \lim_{n\to\infty} \MfrakH_{\frac{1}{n}}\Bigg(\re_{B(u)}\bigg(\MfrakL_{\frac{1}{m}}\Big(\lim_{n\to \infty}\MfrakH_{\frac{1}{n}}\Big(\re_{B(v)}(\MfrakL_{\frac{1}{n}}(w))\Big)\Big) \bigg)\Bigg)-\lim_{n\to\infty} \MfrakH_{\frac{1}{n}}\Bigg(\re_{B(v)}\bigg(\MfrakL_{\frac{1}{m}}\bigg(\lim_{n\to \infty}\MfrakH_{\frac{1}{n}}\Big(\re_{B(u)}(\MfrakL_{\frac{1}{n}}(w))\Big)\bigg) \bigg)\Bigg)  \\
			&\ \hspace{5cm}\text{(by Eq.~\meqref{eq:jx})}\\
			=&\ u \xsj(v\xsj w)-v\xsj(u\xsj w ).
	\end{align*}}
	Then the second statement follows since a $\lim\limits_{n\to\infty}(\MfrakL_{\frac{1}{n}},\MfrakH_{\frac{1}{n}})$-abelian post-Lie algebra is a pre-Lie algebra.
\end{proof}

\begin{remark}
   Take
$\MfrakL_{\frac{1}{n}}:=\frac{1}{n} \id$ and $\MfrakH_{\frac{1}{n}}:=n\, \id$. Then a Rota-Baxter Lie algebra with limit-weight $\lim\limits_{n\to\infty}(\MfrakL_{\frac{1}{n}},\MfrakH_{\frac{1}{n}})$ which is also a $\lim\limits_{n\to\infty}(\MfrakL_{\frac{1}{n}},\MfrakH_{\frac{1}{n}})$-abelian Lie algebra reduces to a Rota-Baxter Lie algebra with weight zero.
\end{remark}

We generalize the notion of post-groups~\mcite{BGST} to semigroups.
\begin{defn}
	A {\bf post-semigroup} is a semigroup $(M,\cdot)$ equipped with a multiplication $\rhd$ on $M$ such that
	\begin{enumerate}
		\item for each $a\in M$, the left multiplication
		\begin{equation*}
			L_{a}^{\rhd}:M\rightarrow M, \quad b\mapsto a\rhd b
		\end{equation*}
		is a homomorphism of the semigroup $(M,\cdot)$, that is,
		\begin{equation}
			a\rhd(b\cdot c)=(a\rhd b)\cdot(a\rhd c), \quad a,b,c\in M;
			\mlabel{postgp1}
		\end{equation}
		\item  the following ``weighted" associativity holds,
		\begin{equation}
			\Big(a\cdot(a\rhd b)\Big)\rhd c = a\rhd (b\rhd c), \quad a,b,c\in M.
			\mlabel{postgp2}
		\end{equation}
	\end{enumerate}
\end{defn}

The following concept will be needed in a moment.

\begin{defn}
	Let $(G,\cdot)$ be a complete $\lim\limits_{n\to\infty}(\mfrakL_{\frac{1}{n}},\mfrakH_{\frac{1}{n}})$-group with unital pairs $(\mfrakL_{\frac{1}{n}},\mfrakH_{\frac{1}{n}}), n\geq 1$. We call $G$ a {\bf $\lim\limits_{n\to\infty}(\mfrakL_{\frac{1}{n}},\mfrakH_{\frac{1}{n}})$-abelian group}
	if the (limit) transported multiplication $\cdot_{(\mfrakL_{\frac{1}{\infty}},\mfrakH_{\frac{1}{\infty}})}$ is commutative:
	$$a\cdot_{(\mfrakL_{\frac{1}{\infty}},\mfrakH_{\frac{1}{\infty}})}b=b\cdot_{(\mfrakL_{\frac{1}{\infty}},\mfrakH_{\frac{1}{\infty}})}a, \quad a, b\in G.$$
	Further, if $G$ is a Lie group and $\mfrakL_{\frac{1}{n}},\mfrakH_{\frac{1}{n}}$ are smooth maps, we call $G$ a {\bf $\lim\limits_{n\to\infty}(\mfrakL_{\frac{1}{n}},\mfrakH_{\frac{1}{n}})$-abelian Lie group}.
\end{defn}

\begin{lemma}
	Let $G$ be a $\lim\limits_{n\to\infty}(\mfrakL_{\frac{1}{n}},\mfrakH_{\frac{1}{n}})$-abelian Lie group with $(\mfrakL_{\frac{1}{n}},\mfrakH_{\frac{1}{n}}), n\geq 1,$ unital pairs and $\frakg$ its Lie algebra where the topology on $\frakg$ is induced from $G$, and let
	$    \MfrakL_{\frac{1}{n}}=(\mfrakL_{\frac{1}{n}})_{*e}$ and  $\MfrakH_{\frac{1}{n}}=(\mfrakH_{\frac{1}{n}})_{*e}
	$ be the tangent maps at the identity $e$. Then
	$\frakg$ is a $\lim\limits_{n\to\infty}(\MfrakL_{\frac{1}{n}},\MfrakH_{\frac{1}{n}})$-abelian Lie algebra.
	\mlabel{lemma:prec}
\end{lemma}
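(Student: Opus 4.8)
The plan is to split the assertion into its two components. First, a $\lim_{n\to\infty}(\mfrakL_{\frac{1}{n}},\mfrakH_{\frac{1}{n}})$-abelian Lie group is in particular a $\lim_{n\to\infty}(\mfrakL_{\frac{1}{n}},\mfrakH_{\frac{1}{n}})$-Lie group with unital pairs, so Lemma~\mref{lemma:groupyr} already gives that $\frakg$, with the induced topology, is a $\lim_{n\to\infty}(\MfrakL_{\frac{1}{n}},\MfrakH_{\frac{1}{n}})$-Lie algebra. Hence the whole content is the limit-abelian identity
\[
\lim_{n\to\infty}\MfrakH_{\frac{1}{n}}\Big([\MfrakL_{\frac{1}{n}}(u),\MfrakL_{\frac{1}{n}}(v)]\Big)=0, \qquad u,v\in\frakg,
\]
which I would obtain by differentiating the commutativity of the transported multiplication along one-parameter subgroups of $G$, in the computational style of Proposition~\mref{prop:leiswl} and Theorem~\mref{thm:tgrelim}.

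Concretely, write $\star:=\cdot_{(\mfrakL_{\frac{1}{\infty}},\mfrakH_{\frac{1}{\infty}})}$ for the transported multiplication of Definition~\mref{defn:tsgp}. Since $G$ is complete with unital pairs, $(G,\star)$ is a group with identity $e$ (Lemma~\mref{lem:lgp1}), and it is commutative by hypothesis; let $\iota$ denote its inversion. Fix $u,v\in\frakg$ and consider the $\star$-commutator $c(s,t):=e^{su}\star e^{tv}\star\iota(e^{su})\star\iota(e^{tv})$ of the $G$-exponentials $e^{su},e^{tv}$. Commutativity forces $c(s,t)=e$ for all $s,t$. The key manipulation is to fold $c(s,t)$ into a single limit of ordinary $G$-commutators: iterating Eq.~\meqref{eq:transmul} exactly as in the associativity argument of Lemma~\mref{lem:lgp1}, and using $\mfrakL_{\frac{1}{n}}\mfrakH_{\frac{1}{n}}=\id_G$ together with Eq.~\meqref{eq:jx} to collapse each factor $\mfrakL_{\frac{1}{n}}\bigl(\iota(e^{su})\bigr)$ to $\mfrakL_{\frac{1}{n}}(e^{su})^{-1}$ along the diagonal, one gets
\[
c(s,t)=\lim_{n\to\infty}\mfrakH_{\frac{1}{n}}\Big(\mfrakL_{\frac{1}{n}}(e^{su})\,\mfrakL_{\frac{1}{n}}(e^{tv})\,\mfrakL_{\frac{1}{n}}(e^{su})^{-1}\,\mfrakL_{\frac{1}{n}}(e^{tv})^{-1}\Big).
\]

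Applying $\left.\dfrac{\dd^2}{\dd s\,\dd t}\right|_{s,t=0}$ to both sides, the left side contributes $0$; on the right side the synchronized-limit property (Eq.~\meqref{eq:jx}) lets the derivative pass through $\lim_{n\to\infty}$, as in the proofs of Proposition~\mref{prop:leiswl} and Theorem~\mref{thm:tgrelim}. For each fixed $n$, the curve $s\mapsto\mfrakL_{\frac{1}{n}}(e^{su})$ runs through $e$ with velocity $\MfrakL_{\frac{1}{n}}(u)$ (the pairs being unital), so the classical identification of the second mixed derivative of a group commutator with the Lie bracket of $\frakg$, post-composed with the linear tangent map $\MfrakH_{\frac{1}{n}}=(\mfrakH_{\frac{1}{n}})_{*e}$, yields
\[
\left.\dfrac{\dd^2}{\dd s\,\dd t}\right|_{s,t=0}\mfrakH_{\frac{1}{n}}\Big(\mfrakL_{\frac{1}{n}}(e^{su})\mfrakL_{\frac{1}{n}}(e^{tv})\mfrakL_{\frac{1}{n}}(e^{su})^{-1}\mfrakL_{\frac{1}{n}}(e^{tv})^{-1}\Big)=\MfrakH_{\frac{1}{n}}\Big([\MfrakL_{\frac{1}{n}}(u),\MfrakL_{\frac{1}{n}}(v)]\Big).
\]
Therefore $\lim_{n\to\infty}\MfrakH_{\frac{1}{n}}\big([\MfrakL_{\frac{1}{n}}(u),\MfrakL_{\frac{1}{n}}(v)]\big)=0$, which combined with Lemma~\mref{lemma:groupyr} shows that $\frakg$ is a $\lim_{n\to\infty}(\MfrakL_{\frac{1}{n}},\MfrakH_{\frac{1}{n}})$-abelian Lie algebra.

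The step I expect to be the main obstacle is justifying the two interchanges of $\lim_{n\to\infty}$ with differentiation — once to fold the $\star$-inversions (themselves limits) into one limit, and once to move $\dd^2/\dd s\,\dd t$ past the limit — for which the only available tool is the synchronized-limit property of Eq.~\meqref{eq:jx}, used repeatedly just as in Theorem~\mref{thm:rbdes1} and Lemma~\mref{lem:lgp1}. Once these interchanges are in place, the remaining ingredient, that the second mixed derivative of a group commutator recovers the bracket, is classical Lie theory carried out inside $G$. If one does not assume the pairs bijective, $(G,\star)$ is only a commutative monoid; one then works instead with the antisymmetric part of the quadratic jet of $\star$ at $(e,e)$, which still equals $\lim_{n\to\infty}\MfrakH_{\frac{1}{n}}([\MfrakL_{\frac{1}{n}}(\cdot),\MfrakL_{\frac{1}{n}}(\cdot)])$ and still vanishes by commutativity.
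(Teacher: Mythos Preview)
Your argument is correct in outline, but the main route through the $\star$-commutator $c(s,t)$ is more roundabout than the paper's and, as you note yourself, requires $\star$-inverses---hence bijective pairs---while the lemma assumes only unital pairs. The paper sidesteps this entirely: it never forms a $\star$-commutator, but instead writes the bracket as the antisymmetric second mixed derivative of the \emph{product},
\[
[\MfrakL_{\frac{1}{n}}(u),\MfrakL_{\frac{1}{n}}(v)]
=\left.\dfrac{\dd^2}{\dd t\,\dd s}\right|_{t,s=0}\mfrakL_{\frac{1}{n}}(e^{tu})\,\mfrakL_{\frac{1}{n}}(e^{sv})
-\left.\dfrac{\dd^2}{\dd t\,\dd s}\right|_{t,s=0}\mfrakL_{\frac{1}{n}}(e^{sv})\,\mfrakL_{\frac{1}{n}}(e^{tu}),
\]
applies $(\mfrakH_{\frac{1}{n}})_{*e}$, swaps limit and derivative once, and recognizes the two resulting terms as $\left.\frac{\dd^2}{\dd t\,\dd s}\right|_{0}\big(e^{tu}\star e^{sv}\big)$ and $\left.\frac{\dd^2}{\dd t\,\dd s}\right|_{0}\big(e^{sv}\star e^{tu}\big)$, which agree by commutativity of $\star$. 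This is exactly the ``antisymmetric part of the quadratic jet'' fallback you sketch in your last paragraph; the paper simply takes that as the primary argument. The payoff is that no $\star$-inversion, no folding of the iterated limits defining $\iota$, and only one interchange of limit with differentiation are needed, so the proof goes through under the stated unital hypothesis without the bijectivity detour.
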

\begin{proof}
	Indeed, we have
	{\small \begin{align*}
			&\ \lim_{n\to\infty}\MfrakH_{\frac{1}{n}}\Big([\MfrakL_{\frac{1}{n}}(u),
			\MfrakL_{\frac{1}{n}}(v)]\Big)= \lim_{n\to\infty}(\mfrakH_{\frac{1}{n}})_{*e}
			\Bigg(\left.\dfrac{\dd^2}{\dd t\dd s}\right|_{t,s=0}\mfrakL_{\frac{1}{n}}(e^{tu})
			\mfrakL_{\frac{1}{n}}(e^{sv})-\left.\dfrac{\dd^2}{\dd t\dd s}\right|_{t,s=0}\mfrakL_{\frac{1}{n}}(e^{sv})
			\mfrakL_{\frac{1}{n}}(e^{tu})\Bigg)\\
			=&\ \lim_{n\to\infty}(\mfrakH_{\frac{1}{n}})_{*e}
			\Bigg(\left.\dfrac{\dd^2}{\dd t\dd s}\right|_{t,s=0}\mfrakL_{\frac{1}{n}}(e^{tu})
			\mfrakL_{\frac{1}{n}}(e^{sv})\Bigg)
			-\lim_{n\to\infty}(\mfrakH_{\frac{1}{n}})_{*e}
			\Bigg(\left.\dfrac{\dd^2}{\dd t\dd s}\right|_{t,s=0}\mfrakL_{\frac{1}{n}}(e^{sv})
			\mfrakL_{\frac{1}{n}}(e^{tu})\Bigg)\\
			=&\ \left.\dfrac{\dd^2}{\dd t\dd s}\right|_{t,s=0}\lim_{n\to\infty}\mfrakH_{\frac{1}{n}}
			\Big(\mfrakL_{\frac{1}{n}}(e^{tu})
			\mfrakL_{\frac{1}{n}}(e^{sv})\Big)-\left.\dfrac{\dd^2}{\dd t\dd s}\right|_{t,s=0}\lim_{n\to\infty}\mfrakH_{\frac{1}{n}}
			\Big(\mfrakL_{\frac{1}{n}}(e^{sv})
			\mfrakL_{\frac{1}{n}}(e^{tu})\Big)\\
			=&\ \left.\dfrac{\dd^2}{\dd t\dd s}\right|_{t,s=0}\lim_{n\to\infty}\mfrakH_{\frac{1}{n}}
			\Big(\mfrakL_{\frac{1}{n}}(e^{tu})
			\mfrakL_{\frac{1}{n}}(e^{sv})\Big)-\left.\dfrac{\dd^2}{\dd t\dd s}\right|_{t,s=0}\lim_{n\to\infty}\mfrakH_{\frac{1}{n}}
			\Big(\mfrakL_{\frac{1}{n}}(e^{tu})
			\mfrakL_{\frac{1}{n}}(e^{sv})\Big)=0 .\qedhere
		\end{align*}
	}
\end{proof}

\begin{remark}
Consequently, the tangent space of a limit-weighted limit-abelian relative Rota-Baxter group is a limit-weighted limit-abelian relative Rota-Baxter Lie algebra.
\mlabel{rk:tgab}
\end{remark}

\begin{defn}
\begin{enumerate}
\item
A $\lim\limits_{n\to\infty}(\mfrakL_{\frac{1}{n}},\mfrakH_{\frac{1}{n}})$-group $G$ is called a {\bf $\lim\limits_{n\to\infty}(\mfrakL_{\frac{1}{n}},\mfrakH_{\frac{1}{n}})$-binary adjoint \complim group} if for any sequences $a_n,b_n\in G, n\geq 1$ with $\lim\limits_{n\to\infty}a_n=a$ and $\lim\limits_{n\to\infty}a_n=a$, there is
\begin{align*}
    \lim\limits_{n\to\infty}\mfrakH_{\frac{1}{n}}(a\mfrakL_{\frac{1}{n}}(b)a^{-1})=\lim\limits_{n\to\infty}\mfrakH_{\frac{1}{n}}(a_n\mfrakL_{\frac{1}{n}}(b_n)a_n^{-1}).
\end{align*}
\item
A $\lim\limits_{n\to\infty}(\MfrakL_{\frac{1}{n}},\MfrakH_{\frac{1}{n}})$-Lie algebra $\frakg$ is called a {\bf $\lim\limits_{n\to\infty}(\MfrakL_{\frac{1}{n}},\MfrakH_{\frac{1}{n}})$-binary adjoint \complim Lie algebra}  if for any sequences $u_n,v_n\in G, n\geq 1$ with $\lim\limits_{n\to\infty}u_n=u$ and $\lim\limits_{n\to\infty}v_n=v$, there is
\begin{align*}
    \lim\limits_{n\to\infty}\MfrakH_{\frac{1}{n}}([u,\MfrakL_{\frac{1}{n}}(v)])=\lim\limits_{n\to\infty}\MfrakH_{\frac{1}{n}}([u_n,\MfrakL_{\frac{1}{n}}(v_n)]).
\end{align*}
\end{enumerate}
\end{defn}

\begin{prop}
\begin{enumerate}
\item
Let $G$ be a complete $\lim\limits_{n\to\infty}(\mfrakL_{\frac{1}{n}},\mfrakH_{\frac{1}{n}})$-binary adjoint \complim group with $(\mfrakL_{\frac{1}{n}},\mfrakH_{\frac{1}{n}})$ unital pairs and for each $a\in G$, $\lim\limits_{n\to\infty}\mfrakL_{\frac{1}{n}}(a)=e$. Then $G$ is a $\lim\limits_{n\to\infty}(\mfrakL_{\frac{1}{n}},\mfrakH_{\frac{1}{n}})$-abelian group.
\item
Let $\frakg$ be a $\lim\limits_{n\to\infty}(\MfrakL_{\frac{1}{n}},\MfrakH_{\frac{1}{n}})$-binary adjoint \complim Lie algebra and for each $u\in \frakg$, $\lim\limits_{n\to\infty}\MfrakL_{\frac{1}{n}}(u)=0$. Then $\frakg$ is a $\lim\limits_{n\to\infty}(\MfrakL_{\frac{1}{n}},\MfrakH_{\frac{1}{n}})$-abelian Lie algebra.
\end{enumerate}
\mlabel{prop:gpzht}
\end{prop}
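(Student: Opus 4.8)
The plan is to unfold the definitions on both sides and reduce each to the binary adjoint \complim hypothesis. On the group side, $G$ being a $\lim_{n\to\infty}(\mfrakL_{\frac{1}{n}},\mfrakH_{\frac{1}{n}})$-abelian group means exactly that the limit transported multiplication $\cdot_{(\mfrakL_{\frac{1}{\infty}},\mfrakH_{\frac{1}{\infty}})}$ of Definition~\mref{defn:tsgp} is commutative; on the Lie side, being a $\lim_{n\to\infty}(\MfrakL_{\frac{1}{n}},\MfrakH_{\frac{1}{n}})$-abelian Lie algebra means $\lim_{n\to\infty}\MfrakH_{\frac{1}{n}}([\MfrakL_{\frac{1}{n}}(u),\MfrakL_{\frac{1}{n}}(v)])=0$ for all $u,v$. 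In each case the transported expression will be rewritten as an ``adjoint'' expression ($\mfrakH_{\frac{1}{n}}$ of a conjugation, resp. $\MfrakH_{\frac{1}{n}}$ of a bracket) so that the binary adjoint \complim property applies, and the hypotheses $\lim_{n\to\infty}\mfrakL_{\frac{1}{n}}(a)=e$, resp. $\lim_{n\to\infty}\MfrakL_{\frac{1}{n}}(u)=0$, will collapse the varying factor.

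I would do part (b) first, as it is immediate and illustrates the mechanism. Put $h_n(x,y):=\MfrakH_{\frac{1}{n}}([x,\MfrakL_{\frac{1}{n}}(y)])$, the map appearing in the defining condition of a binary adjoint \complim Lie algebra. Then $\MfrakH_{\frac{1}{n}}([\MfrakL_{\frac{1}{n}}(u),\MfrakL_{\frac{1}{n}}(v)])=h_n(\MfrakL_{\frac{1}{n}}(u),v)$; since $\MfrakL_{\frac{1}{n}}(u)\to 0$ and the second argument is the constant sequence $v$, the binary adjoint \complim property gives $\lim_{n\to\infty}h_n(\MfrakL_{\frac{1}{n}}(u),v)=\lim_{n\to\infty}h_n(0,v)=\lim_{n\to\infty}\MfrakH_{\frac{1}{n}}([0,\MfrakL_{\frac{1}{n}}(v)])=0$, which is exactly the limit-abelian condition.

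For part (a), write $\cdot_{\infty}$ for $\cdot_{(\mfrakL_{\frac{1}{\infty}},\mfrakH_{\frac{1}{\infty}})}$ and set $g_n(x,y):=\mfrakH_{\frac{1}{n}}\big(x\,\mfrakL_{\frac{1}{n}}(y)\,x^{-1}\big)$. Using the conjugation identity $\mfrakL_{\frac{1}{n}}(a)\mfrakL_{\frac{1}{n}}(b)=\mfrakL_{\frac{1}{n}}(a)\big(\mfrakL_{\frac{1}{n}}(b)\mfrakL_{\frac{1}{n}}(a)\big)\mfrakL_{\frac{1}{n}}(a)^{-1}$ together with $\mfrakL_{\frac{1}{n}}\mfrakH_{\frac{1}{n}}=\id$, and setting $Q_n:=\mfrakH_{\frac{1}{n}}(\mfrakL_{\frac{1}{n}}(b)\mfrakL_{\frac{1}{n}}(a))$, one gets $\mfrakL_{\frac{1}{n}}(a)\mfrakL_{\frac{1}{n}}(b)=\mfrakL_{\frac{1}{n}}(a)\,\mfrakL_{\frac{1}{n}}(Q_n)\,\mfrakL_{\frac{1}{n}}(a)^{-1}$. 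Applying $\mfrakH_{\frac{1}{n}}$, letting $n\to\infty$, and invoking the binary adjoint \complim property with the sequences $\mfrakL_{\frac{1}{n}}(a)\to e$ and $Q_n\to b\cdot_{\infty}a$ (the latter by the very definition of $\cdot_{\infty}$), one obtains
\[
a\cdot_{\infty}b=\lim_{n\to\infty}g_n\big(\mfrakL_{\frac{1}{n}}(a),Q_n\big)=\lim_{n\to\infty}g_n\big(e,\,b\cdot_{\infty}a\big)=\lim_{n\to\infty}\mfrakH_{\frac{1}{n}}\big(\mfrakL_{\frac{1}{n}}(b\cdot_{\infty}a)\big)=:\sigma(b\cdot_{\infty}a),
\]
where $\sigma(c):=\lim_{n\to\infty}\mfrakH_{\frac{1}{n}}(\mfrakL_{\frac{1}{n}}(c))$, a limit that exists because it is $\lim_{n\to\infty}g_n(e,c)$, which the binary adjoint \complim hypothesis controls. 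By the symmetric argument, $b\cdot_{\infty}a=\sigma(a\cdot_{\infty}b)$.

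It then suffices to show $\sigma$ is idempotent: granting that, $a\cdot_{\infty}b=\sigma(b\cdot_{\infty}a)=\sigma(\sigma(a\cdot_{\infty}b))=\sigma(a\cdot_{\infty}b)=b\cdot_{\infty}a$, so $\cdot_{\infty}$ is commutative and $G$ is a $\lim_{n\to\infty}(\mfrakL_{\frac{1}{n}},\mfrakH_{\frac{1}{n}})$-abelian group. Idempotency follows by specializing the displayed identity to $b=e$: by unitality $\mfrakL_{\frac{1}{n}}(e)=e$, hence $a\cdot_{\infty}e=\sigma(a)$ and $e\cdot_{\infty}a=\sigma(a)$, and the identity $a\cdot_{\infty}e=\sigma(e\cdot_{\infty}a)$ reads $\sigma(a)=\sigma(\sigma(a))$. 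I expect this last point to be the crux: because the pairs $(\mfrakL_{\frac{1}{n}},\mfrakH_{\frac{1}{n}})$ are only assumed unital and not bijective, $e$ need not be a two-sided identity for $\cdot_{\infty}$, so one cannot simply assert $\sigma=\id$ and must route through the idempotency of $\sigma$ as above. Completeness of $G$ guarantees that $\lim_{n\to\infty}\mfrakH_{\frac{1}{n}}(e)$ and $\lim_{n\to\infty}\mfrakH_{\frac{1}{n}}(\mfrakL_{\frac{1}{n}}(a)^{-1})$ exist, so that $\cdot_{\infty}$ is at least a semigroup by Lemma~\mref{lem:lgp1} and every expression above is well defined.
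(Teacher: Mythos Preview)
Your proof is correct. Part (b) is exactly the mechanism the paper would use (the paper only writes out the group case), and part (a) is valid as written: the rewriting $\mfrakL_{\frac{1}{n}}(a)\mfrakL_{\frac{1}{n}}(b)=\mfrakL_{\frac{1}{n}}(a)\,\mfrakL_{\frac{1}{n}}(Q_n)\,\mfrakL_{\frac{1}{n}}(a)^{-1}$ with $Q_n=\mfrakH_{\frac{1}{n}}(\mfrakL_{\frac{1}{n}}(b)\mfrakL_{\frac{1}{n}}(a))$ is correct, the binary adjoint \complim hypothesis applies to the pair of sequences $(\mfrakL_{\frac{1}{n}}(a),Q_n)\to(e,\,b\cdot_\infty a)$, and your idempotency trick $\sigma^2=\sigma$ (obtained by specializing $b=e$) closes the argument.

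Your route for (a) differs from the paper's. The paper first shows the transported commutator is trivial,
\[
\lim_{n\to\infty}\mfrakH_{\frac{1}{n}}\Big(\mfrakL_{\frac{1}{n}}(a)\mfrakL_{\frac{1}{n}}(b)\mfrakL_{\frac{1}{n}}(a)^{-1}\mfrakL_{\frac{1}{n}}(b)^{-1}\Big)=e,
\]
and then left-multiplies $b\cdot_\infty a$ by this commutator, collapsing via the \complim property to $a\cdot_\infty b$. This is the group-theoretic ``commutator $=e$ implies abelian'' template, but it tacitly uses that $e\cdot_\infty X=X$ for $X=b\cdot_\infty a$, which holds because $\sigma$ fixes the image of $\cdot_\infty$ (an application of $\mfrakL_{\frac{1}{n}}\mfrakH_{\frac{1}{n}}=\id$ inside the diagonal limit). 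Your argument makes this hidden point explicit via the idempotency of $\sigma$, at the cost of the extra symmetry step $b\cdot_\infty a=\sigma(a\cdot_\infty b)$. In fact, once you have $a\cdot_\infty b=\sigma(b\cdot_\infty a)$, you could shortcut by noting directly that $\sigma(b\cdot_\infty a)=b\cdot_\infty a$ (same reason as above), bypassing the idempotency detour. Two minor remarks: the semigroup property of $\cdot_\infty$ comes from Lemma~\ref{lem:lgp1} already without completeness; and the existence of $\sigma(c)$ follows simply from $\sigma(c)=c\cdot_\infty e$ using unitality, rather than from the binary adjoint \complim hypothesis.
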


\begin{proof}
We only proof the group case. Notice that for each $a,b\in G$,
{\small
	\begin{align*}
		&\ \lim_{n\to\infty}\mfrakH_{\frac{1}{n}}\Big(\mfrakL_{\frac{1}{n}}(a)
		\mfrakL_{\frac{1}{n}}(b)\mfrakL_{\frac{1}{n}}(a)^{-1}\mfrakL_{\frac{1}{n}}(b)^{-1}\Big)\\
		=&\ \bigg(\lim_{n\to\infty}\mfrakH_{\frac{1}{n}}\Big(\mfrakL_{\frac{1}{n}}(a)\mfrakL_{\frac{1}{n}}(b)\mfrakL_{\frac{1}{n}}(a)^{-1}\Big)\bigg)\cdot_{\big(\mfrakL_{\frac{1}{\infty}},\mfrakH_{\frac{1}{\infty}}\big)}\bigg(\lim_{n\to\infty}\mfrakH_{\frac{1}{n}}\Big(\mfrakL_{\frac{1}{n}}(b)^{-1}\Big)\bigg)\\
		=&\ \bigg(\lim_{n\to\infty}\mfrakH_{\frac{1}{n}}\Big(\lim_{m\to\infty}\mfrakL_{\frac{1}{m}}(a)\mfrakL_{\frac{1}{n}}(b)\mfrakL_{\frac{1}{m}}(a)^{-1}\Big)\bigg)\cdot_{\big(\mfrakL_{\frac{1}{\infty}},\mfrakH_{\frac{1}{\infty}}\big)}\bigg(\lim_{n\to\infty}\mfrakH_{\frac{1}{n}}\Big(\mfrakL_{\frac{1}{n}}(b)^{-1}\Big)\bigg)\\
		&\ \text{(by $\lim\limits_{n\to\infty}(\mfrakL_{\frac{1}{n}},\mfrakH_{\frac{1}{n}})$-binary adjoint \complim property)}\\
		=&\ \lim_{n\to\infty}\mfrakH_{\frac{1}{n}}\Big(\mfrakL_{\frac{1}{n}}(b)\Big)\cdot_{\big(\mfrakL_{\frac{1}{\infty}},\mfrakH_{\frac{1}{\infty}}\big)}\mfrakH_{\frac{1}{n}}\Big(\mfrakL_{\frac{1}{n}}(b)^{-1}\Big)\\
		=&\ \lim_{n\to\infty}\mfrakH_{\frac{1}{n}}(e)=e.
	\end{align*}
}
	So we have
 {\small
	\begin{align*}
		b \cdot_{\big(\mfrakL_{\frac{1}{\infty}},\mfrakH_{\frac{1}{\infty}}\big)} a = &\ \lim_{n\to\infty}\mfrakH_{\frac{1}{n}}\Big(\mfrakL_{\frac{1}{n}}(a)\mfrakL_{\frac{1}{n}}(b)\mfrakL_{\frac{1}{n}}(a)^{-1}\mfrakL_{\frac{1}{n}}(b)^{-1}\Big) \cdot_{\big(\mfrakL_{\frac{1}{\infty}},\mfrakH_{\frac{1}{\infty}}\big)} b \cdot_{\big(\mfrakL_{\frac{1}{\infty}},\mfrakH_{\frac{1}{\infty}}\big)} a\\
		=&\ \lim_{n\to\infty}\mfrakH_{\frac{1}{n}}\Big(\mfrakL_{\frac{1}{n}}(a)
		\mfrakL_{\frac{1}{n}}(b)\mfrakL_{\frac{1}{n}}(a)^{-1}
		\mfrakL_{\frac{1}{n}}(b)^{-1}\mfrakL_{\frac{1}{n}}(b)
		\mfrakL_{\frac{1}{n}}(a)\Big)\\
		=&\ \lim_{n\to\infty}\mfrakH_{\frac{1}{n}}\Big(\mfrakL_{\frac{1}{n}}(a)\mfrakL_{\frac{1}{n}}(b)\Big)\\
		=&\ a \cdot_{\big(\mfrakL_{\frac{1}{\infty}},\mfrakH_{\frac{1}{\infty}}\big)} b.
	\end{align*}}
	Thus $G$ is a $\lim\limits_{n\to\infty}(\mfrakL_{\frac{1}{n}},\mfrakH_{\frac{1}{n}})$-ableian group.
\end{proof}
\begin{remark}
  Proposition~\mref{prop:gpzht} shows that
	\begin{enumerate}
		\item  a Rota-Baxter group with limit-weight zero is a limit-weighted limit-abelian Rota-Baxter group when it is a complete $\lim\limits_{n\to\infty}(\mfrakL_{\frac{1}{n}},\mfrakH_{\frac{1}{n}})$-binary adjoint \complim group.
		\item a Rota-Baxter Lie algebra with limit-weight zero is a  limit-weighted limit-abelian Rota-Baxter Lie algebra when it is a  $\lim\limits_{n\to\infty}(\MfrakL_{\frac{1}{n}},\MfrakH_{\frac{1}{n}})$-binary adjoint \complim Lie algebra.
	\end{enumerate}
\end{remark}

\begin{defn}
Let $(G,\cdot)$ be a $\lim\limits_{n\to\infty}(\mfrakL_{\frac{1}{n}},\mfrakH_{\frac{1}{n}})$-group equipped with an additional multiplication $\rhd$, and let $\cdot_{(\mfrakL_{\frac{1}{\infty}},\mfrakH_{\frac{1}{\infty}})}$ be the transported multiplication given in Eq.~\meqref{eq:transmul}.
\begin{enumerate}
\item	We call $(G,\cdot,\rhd)$ a {\bf post-group with limit-weight $\lim\limits_{n\to\infty}(\mfrakL_{\frac{1}{n}},\mfrakH_{\frac{1}{n}})$} if $(G,\cdot_{(\mfrakL_{\frac{1}{\infty}},\mfrakH_{\frac{1}{\infty}})},\rhd)$ is a post-semigroup and  $a\rhd e=e$ , $e\rhd a=a$, for all $a\in G$.
\item If moreover $(G,\cdot)$ is a $\lim\limits_{n\to\infty}(\mfrakL_{\frac{1}{n}},\mfrakH_{\frac{1}{n}})$-abelian group, then we call $(G, \cdot, \rhd)$ a {\bf pre-group}.
\item In the above two items, if $(G,\cdot)$ is a Lie group and $\rhd,\mfrakH_{\frac{1}{n}},\mfrakH_{\frac{1}{n}}$ are smooth maps, then  $(G,\cdot,\rhd)$ is called a {\bf post-Lie group with limit-weight $\lim\limits_{n\to\infty}(\mfrakL_{\frac{1}{n}},\mfrakH_{\frac{1}{n}})$} and a {\bf pre-Lie group} respectively.
\end{enumerate}
	\mlabel{defn:pregroup}
\end{defn}

\begin{remark}
	A pre-group is not necessarily a commutative post group.
	A counterexample is given in Proposition~\mref{prop:rb33}.
\end{remark}

\begin{lemma}~\cite{BGST}
	Let $(G,\cdot,\rhd)$ be a post-group. Define $*$: $G\times G \rightarrow G, (a,b)\mapsto a\cdot (a\rhd b)$. Then
	\begin{enumerate}
		\item $(G,\ast)$ is a group.
		\item Let $\id_G: G\rightarrow G$ be the identity map. Then $\id_G$ is a relative Rota-Baxter operator on $(G,\ast)$ for the action $L^{\rhd}$.
	\end{enumerate}
	\mlabel{lem:prere}
\end{lemma}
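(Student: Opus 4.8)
The plan is to check the group axioms for $\ast$ directly from the post-semigroup axioms \meqref{postgp1}--\meqref{postgp2} and the unit conditions $a\rhd e=e$, $e\rhd a=a$ (also using that each $L_a^\rhd$ is a bijective homomorphism of $(G,\cdot)$, i.e.\ an automorphism, which is part of the post-group data, cf.\ \mcite{BGST}), and then to read off (b) from the observation that the defining equation of $\ast$ is literally the relative Rota-Baxter identity \meqref{eq:relrbozero} for the action $L^\rhd$ with trivial weight maps.

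For associativity, take $a,b,c\in G$ and expand $(a\ast b)\ast c=\big(a\cdot(a\rhd b)\big)\cdot\Big(\big(a\cdot(a\rhd b)\big)\rhd c\Big)$; by \meqref{postgp2} the last factor is $a\rhd(b\rhd c)$, so, regrouping by associativity of $\cdot$ and applying the homomorphism property \meqref{postgp1}, $(a\ast b)\ast c=a\cdot\Big((a\rhd b)\cdot\big(a\rhd(b\rhd c)\big)\Big)=a\cdot\big(a\rhd(b\cdot(b\rhd c))\big)=a\cdot\big(a\rhd(b\ast c)\big)=a\ast(b\ast c)$. The unit conditions give $e\ast a=e\cdot(e\rhd a)=a$ and $a\ast e=a\cdot(a\rhd e)=a$, so $e$ is a two-sided $\ast$-identity. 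For inverses, set $\bar a:=(L_a^\rhd)^{-1}(a^{-1})$, well defined since $L_a^\rhd$ is bijective and $a^{-1}$ is the $\cdot$-inverse of $a$; then $a\ast\bar a=a\cdot(a\rhd\bar a)=a\cdot a^{-1}=e$, so every element has a right $\ast$-inverse. Choosing a right $\ast$-inverse $\overline{\bar a}$ of $\bar a$, associativity and the identity $e$ give $a=a\ast e=(a\ast\bar a)\ast\overline{\bar a}=e\ast\overline{\bar a}=\overline{\bar a}$, hence $\bar a\ast a=\bar a\ast\overline{\bar a}=e$ and $\bar a$ is a two-sided $\ast$-inverse. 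Thus $(G,\ast)$ is a group, proving (a).

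For (b), I first check that $L^\rhd\colon(G,\ast)\to\Aut(G,\cdot)$, $a\mapsto L_a^\rhd$, is a group action: each $L_a^\rhd$ is an automorphism of $(G,\cdot)$ by hypothesis, $L_e^\rhd=\id_G$ because $e\rhd a=a$, and $L_{a\ast b}^\rhd(c)=\big(a\cdot(a\rhd b)\big)\rhd c=a\rhd(b\rhd c)=L_a^\rhd\big(L_b^\rhd(c)\big)$ by \meqref{postgp2}, so $L_{a\ast b}^\rhd=L_a^\rhd\circ L_b^\rhd$. Then, viewing $\id_G$ as a map $(G,\cdot)\to(G,\ast)$, the relative Rota-Baxter condition \meqref{eq:relrbozero} with $\mfrakL_{\frac{1}{n}}=\mfrakH_{\frac{1}{n}}=\id_G$ (so the limit collapses) and action $L^\rhd$ becomes $\id_G(a)\ast\id_G(b)=\id_G\big(a\cdot(\id_G(a)\rhd b)\big)$, i.e.\ $a\ast b=a\cdot(a\rhd b)$, which is exactly the definition of $\ast$; hence $\id_G$ is a relative Rota-Baxter operator on $(G,\ast)$ for $L^\rhd$, proving (b). The only step that is not a pure substitution is the bootstrap from a right $\ast$-inverse to a two-sided one; the remaining care needed is merely keeping track of which multiplication ($\cdot$ or $\ast$) and which action appear on each side of \meqref{eq:relrbozero}.
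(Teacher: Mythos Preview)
Your proof is correct. The paper does not give its own proof of this lemma: it is stated with a citation to \cite{BGST} and used as a known fact, so there is nothing to compare against beyond noting that your argument is precisely the standard one from \cite{BGST} (direct verification of associativity from \meqref{postgp1}--\meqref{postgp2}, unit from $a\rhd e=e$ and $e\rhd a=a$, inverse via bijectivity of $L_a^\rhd$, and the observation that the defining relation of $\ast$ is the weight-one relative Rota-Baxter identity for the action $L^\rhd$ of $(G,\ast)$ on $(G,\cdot)$).
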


The following result on pre-Lie groups justifies their name by establishing their relation with pre-Lie algebras.

\begin{theorem}
	Let $(G,\cdot,\rhd)$ be a post-Lie group with limit-weight $\lim\limits_{n\to\infty}(\mfrakL_{\frac{1}{n}},\mfrakH_{\frac{1}{n}})$ $($resp. a pre-Lie group$)$ where $(\mfrakL_{\frac{1}{n}},\mfrakH_{\frac{1}{n}})$ are unital, and $(\frakg, [\cdot,\cdot])$ the Lie algebra of $(G,\cdot)$ equipped with the topology induced from $G$. Define
	\begin{equation*}
		\xsj:\frakg\otimes\frakg\rightarrow\frakg, \quad u\xsj v:= L_{*e}^{\rhd}(u)(v)= \left.\dfrac{\dd}{\dd t}\right|_{t=0}
		L_{\exp(tu)}^{\rhd}(v)= \left.\dfrac{\dd^2}{\dd t\dd s}\right|_{t,s=0}
		L_{\exp(tu)}^{\rhd}(\exp(sv)).
	\end{equation*}
	Then $(\frakg,[\cdot,\cdot],\xsj)$ (resp. $(\frakg, \xsj)$) is a post-Lie algebra with limit-weight $\lim\limits_{n\to\infty}(\MfrakL_{\frac{1}{n}},\MfrakH_{\frac{1}{n}})$ $($resp. a pre-Lie algebra$)$.
	\mlabel{thm:tangprelie}
\end{theorem}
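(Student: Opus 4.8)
The plan is to reduce the statement to verifying the two identities \meqref{eq:postdk} and \meqref{eq:postlimit} that define a limit-weighted post-Lie algebra for the bilinear operation $\xsj$, and to obtain each of them by differentiating one of the post-semigroup axioms of $(G,\cdot_{(\mfrakL_{\frac{1}{\infty}},\mfrakH_{\frac{1}{\infty}})},\rhd)$ along the exponential curves $\exp(tu),\exp(sv),\exp(rw)$ of $G$, commuting the derivatives past the limits $\lim_{n\to\infty}$ hidden inside the transported multiplication $\star:=\cdot_{(\mfrakL_{\frac{1}{\infty}},\mfrakH_{\frac{1}{\infty}})}$ by the \complim hypotheses, in the same style as the proofs of Proposition~\mref{prop:leiswl}, Theorem~\mref{thm:tgrelim} and Lemma~\mref{lemma:prec}. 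By Lemma~\mref{lemma:groupyr}, $\frakg=T_eG$ with the induced topology is a $\lim_{n\to\infty}(\MfrakL_{\frac{1}{n}},\MfrakH_{\frac{1}{n}})$-Lie algebra, so those two identities are all that must be checked in the post-Lie case; I abbreviate $\llbracket u,v\rrbracket:=\lim_{n\to\infty}\MfrakH_{\frac{1}{n}}\big([\MfrakL_{\frac{1}{n}}(u),\MfrakL_{\frac{1}{n}}(v)]\big)$. The key preliminary point is a description of $\star$ near $e$: the pairs being unital, $e$ is a two-sided identity for $\star$ (cf.\ the proof of Lemma~\mref{lem:lgp1}), so a second-order Taylor expansion of $\star$ at $(e,e)$ has a well-defined \emph{bilinear} coupling term $B\colon\frakg\times\frakg\to\frakg$, $B(x,y)=\partial_t\partial_s\big|_{0}\big(\alpha(t)\star\beta(s)\big)$ for any smooth curves with $\alpha(0)=\beta(0)=e$, $\alpha'(0)=x$, $\beta'(0)=y$; and the computation in the proof of Lemma~\mref{lemma:prec} says precisely $\llbracket x,y\rrbracket=B(x,y)-B(y,x)$. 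In particular $\llbracket\cdot,\cdot\rrbracket$ is bilinear, and for any smooth self-map $\psi$ of $G$ fixing $e$ and multiplicative for $\star$ one gets $\psi_{*e}\llbracket x,y\rrbracket=\llbracket\psi_{*e}x,\psi_{*e}y\rrbracket$ (a chain-rule computation pushing $\psi_{*e}$ inside the antisymmetrized mixed second derivative; the second-order correction terms cancel by symmetry of the second differential of $\psi$).

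To establish \meqref{eq:postdk}, I would fix $u$ and apply this to $\psi_t:=L^{\rhd}_{\exp(tu)}$: it is multiplicative for $\star$ by \meqref{postgp1}, it fixes $e$ because $\exp(tu)\rhd e=e$, and $\psi_0=\id_G$ because $e\rhd a=a$. Hence $(\psi_t)_{*e}\llbracket v,w\rrbracket=\llbracket(\psi_t)_{*e}v,(\psi_t)_{*e}w\rrbracket$ for all $t$; differentiating at $t=0$ and using $(\psi_0)_{*e}=\id_\frakg$, the Leibniz rule, bilinearity of $\llbracket\cdot,\cdot\rrbracket$, and the identity $\partial_t\big|_{0}(\psi_t)_{*e}(x)=\partial_t\partial_r\big|_{0}\big(\exp(tu)\rhd\exp(rx)\big)=u\xsj x$ coming from the definition of $\xsj$, we arrive at $u\xsj\llbracket v,w\rrbracket=\llbracket u\xsj v,w\rrbracket+\llbracket v,u\xsj w\rrbracket$, which is \meqref{eq:postdk}.

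For \meqref{eq:postlimit}, the plan is to introduce $\rho\colon G\to\GL(\frakg)$, $\rho(a):=(L^{\rhd}_a)_{*e}$ (well defined since $L^\rhd_a$ is an automorphism of $(G,\star)$ fixing $e$, with $\rho(e)=\id_\frakg$ because $e\rhd a=a$), and, with $a*b:=a\star(a\rhd b)$, to recall from Lemma~\mref{lem:prere} that $(G,*)$ is a group, indeed a Lie group with the smooth structure of $G$. Axiom \meqref{postgp2} reads $(a*b)\rhd c=a\rhd(b\rhd c)$, so differentiating in $c$ at $e$ gives $\rho(a*b)=\rho(a)\circ\rho(b)$; thus $\rho\colon(G,*)\to\GL(\frakg)$ is a Lie group homomorphism and $\rho_{*e}\colon\frakg\to\gl(\frakg)$ is a Lie algebra homomorphism for the bracket $[\cdot,\cdot]_{*}$ of $(G,*)$. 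It remains to identify the two sides: evaluating along $t\mapsto\exp(tu)$ gives $\rho_{*e}(u)(w)=\partial_t\partial_r\big|_{0}\big(\exp(tu)\rhd\exp(rw)\big)=u\xsj w$, so $\rho_{*e}(u)$ is left $\xsj$-multiplication by $u$; and a second-order expansion of $e^{tu}*e^{sv}=e^{tu}\star(e^{tu}\rhd e^{sv})$ at the origin, using $e\rhd a=a$, $a\rhd e=e$ and the form $B$, gives $\partial_t\partial_s\big|_{0}(e^{tu}*e^{sv})=u\xsj v+B(u,v)$, whence $[u,v]_{*}=(u\xsj v-v\xsj u)+\llbracket u,v\rrbracket$. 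Feeding these into $\rho_{*e}([u,v]_{*})=\rho_{*e}(u)\,\rho_{*e}(v)-\rho_{*e}(v)\,\rho_{*e}(u)$ and evaluating on $w$ produces exactly \meqref{eq:postlimit}. Together with Lemma~\mref{lemma:groupyr} this settles the post-Lie case; in the pre-Lie case $(G,\cdot)$ is $\lim_{n\to\infty}(\mfrakL_{\frac{1}{n}},\mfrakH_{\frac{1}{n}})$-abelian, so $\llbracket\cdot,\cdot\rrbracket\equiv0$ by Lemma~\mref{lemma:prec}, \meqref{eq:postdk} holds trivially, and \meqref{eq:postlimit} collapses to the left pre-Lie identity, giving a pre-Lie algebra $(\frakg,\xsj)$ (cf.\ Remark~\mref{rk:post-pre}).

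I expect the real difficulty to be analytic rather than algebraic: justifying rigorously that the one-parameter derivatives commute with the limits $\lim_{n\to\infty}$ buried inside $\star$, hence inside $*$, and that $\star$ is regular enough both to possess a well-defined bilinear quadratic part $B$ and to make $(G,*)$ a Lie group with the smooth structure of $G$. This is exactly where the smoothness of $\mfrakL_{\frac{1}{n}},\mfrakH_{\frac{1}{n}},\rhd$ and the \complim axioms enter, and it should be handled just as in the proofs of Proposition~\mref{prop:leiswl}, Theorem~\mref{thm:tgrelim} and Lemma~\mref{lemma:prec}.
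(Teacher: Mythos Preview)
Your proposal is correct and follows the same overall plan as the paper: obtain \meqref{eq:postdk} by differentiating axiom \meqref{postgp1} and \meqref{eq:postlimit} by differentiating axiom \meqref{postgp2}, then invoke Lemma~\mref{lemma:prec} for the pre-Lie case. The packaging differs---the paper carries out both computations by hand using the Baker--Campbell--Hausdorff formula for $\star$, whereas you organize them more structurally via the bilinear coupling form $B$ of $\star$ and the observation that $\rho(a)=(L^{\rhd}_a)_{*e}$ is a homomorphism $(G,*)\to\GL(\frakg)$---but the underlying content is the same, and your version is arguably cleaner.

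One small caveat: your appeal to Lemma~\mref{lem:prere} is not quite on the nose, since that lemma is stated for post-\emph{groups} while Definition~\mref{defn:pregroup} only guarantees that $(G,\star,\rhd)$ is a post-\emph{semigroup}. Fortunately you do not actually need $(G,*)$ to be a group: all that your argument uses is that $*$ is smooth and associative (immediate from the post-semigroup axioms) and that $e$ is a two-sided unit for $*$ (which follows from the unital-pair hypothesis together with $e\rhd a=a$ and $a\rhd e=e$). This is enough to define $[\cdot,\cdot]_{*}$ as the antisymmetrized mixed second derivative and to conclude that $\rho_{*e}$ intertwines it with the commutator in $\gl(\frakg)$. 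The paper avoids this point by differentiating directly without naming $(G,*)$.
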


\begin{proof}
	Using the Baker-Campbell-Hausdorff formula, we have
	{\small\begin{align*}
			&\ u\xsj\lim_{n\to\infty}\MfrakH_{\frac{1}{n}}\Big([\MfrakL_{\frac{1}{n}}(v),\MfrakL_{\frac{1}{n}}(w)]\Big)\\
			=&\ \left.\dfrac{\dd}{\dd t}\right|_{t=0} \left.\dfrac{\dd}{\dd s}\right|_{s=0}
			\left.\dfrac{\dd}{\dd r}\right|_{r=0} \exp(tu)\rhd \lim_{n\to\infty}\exp\Big(s\MfrakH_{\frac{1}{n}}(v)+r\MfrakL_{\frac{1}{n}}(w)
			+\frac{1}{2}[s\MfrakH_{\frac{1}{n}}(v),r\MfrakL_{\frac{1}{n}}(w)]
			+\cdots\Big)\\
            &- \left.\dfrac{\dd}{\dd t}\right|_{t=0} \left.\dfrac{\dd}{\dd s}\right|_{s=0}
			\left.\dfrac{\dd}{\dd r}\right|_{r=0} \exp(tu)\rhd\lim_{n\to\infty} \exp\Big(s\MfrakH_{\frac{1}{n}}(w)+r\MfrakL_{\frac{1}{n}}(v)
			+\frac{1}{2}[s\MfrakH_{\frac{1}{n}}(w),r\MfrakL_{\frac{1}{n}}(v)]
			+\cdots\Big)\\
			= &\ \left.\dfrac{\dd}{\dd t}\right|_{t=0} \left.\dfrac{\dd}{\dd s}\right|_{s=0}
			\left.\dfrac{\dd}{\dd r}\right|_{r=0} \exp(tu)\rhd\Big(\lim_{n\to\infty} \exp(sv)\cdot_{\big(\mfrakL_{\frac{1}{\infty}},\mfrakH_{\frac{1}{\infty}}\big)}\exp(rw)\Big)\\
            &-\left.\dfrac{\dd}{\dd t}\right|_{t=0} \left.\dfrac{\dd}{\dd s}\right|_{s=0}
			\left.\dfrac{\dd}{\dd r}\right|_{r=0} \exp(tu)\rhd\Big(\lim_{n\to\infty} \exp(rw)\cdot_{\big(\mfrakL_{\frac{1}{\infty}},\mfrakH_{\frac{1}{\infty}}\big)}\exp(sv)\Big)\\
			=&\ \left.\dfrac{\dd}{\dd t}\right|_{t=0} \left.\dfrac{\dd}{\dd s}\right|_{s=0}
			\left.\dfrac{\dd}{\dd r}\right|_{r=0}\bigg(\lim_{n\to\infty} \Big(\exp(tu)\rhd\exp(sv)\Big)\cdot_{\big(\mfrakL_{\frac{1}{\infty}},\mfrakH_{\frac{1}{\infty}}\big)}\Big(\exp(tu)\rhd\exp(rw)\Big)\bigg)\\
            &- \left.\dfrac{\dd}{\dd t}\right|_{t=0} \left.\dfrac{\dd}{\dd s}\right|_{s=0}
			\left.\dfrac{\dd}{\dd r}\right|_{r=0}\bigg(\lim_{n\to\infty} \Big(\exp(tu)\rhd\exp(rw)\Big)\cdot_{\big(\mfrakL_{\frac{1}{\infty}},\mfrakH_{\frac{1}{\infty}}\big)}\Big(\exp(tu)\rhd\exp(sv)\Big)\bigg)\\
			=&\ \left.\dfrac{\dd}{\dd t}\right|_{t=0} \left.\dfrac{\dd}{\dd s}\right|_{s=0}\left.\dfrac{\dd}{\dd r}\right|_{r=0} \exp\Big(\MfrakH_{\frac{1}{n}}(tv\xsj sv)+\MfrakL_{\frac{1}{n}}(tu\xsj rw)
			+\frac{1}{2}[\MfrakH_{\frac{1}{n}}(tu\xsj sv),\MfrakL_{\frac{1}{n}}(tu\xsj rw)]
			+\cdots\Big)\\
            &- \left.\dfrac{\dd}{\dd t}\right|_{t=0} \left.\dfrac{\dd}{\dd s}\right|_{s=0}\left.\dfrac{\dd}{\dd r}\right|_{r=0} \exp\Big(\MfrakH_{\frac{1}{n}}(tv\xsj sv)+\MfrakL_{\frac{1}{n}}(tu\xsj rw)
			+\frac{1}{2}[\MfrakH_{\frac{1}{n}}(tu\xsj rw),\MfrakL_{\frac{1}{n}}(tu\xsj sv)]
			+\cdots\Big)\\
			=&\ \lim_{n\to\infty}\MfrakH_{\frac{1}{n}}\Big([\MfrakL_{\frac{1}{n}}(u\xsj v),\MfrakL_{\frac{1}{n}}(w)]\Big)+\lim_{n\to\infty}\MfrakH_{\frac{1}{n}}\Big([\MfrakL_{\frac{1}{n}}(v),\MfrakL_{\frac{1}{n}}(u\xsj w)]\Big).
	\end{align*}
}
Thus Eq.~\meqref{eq:postdk} holds. In addition, we have
	{\small
		\begin{align*}
			&\ u \xsj(v\xsj w)-v\xsj(u\xsj w)\\
			=&\ \left.\dfrac{\dd}{\dd t}\right|_{t=0} \left.\dfrac{\dd}{\dd s}\right|_{s=0}
			\left.\dfrac{\dd}{\dd r}\right|_{r=0} \Big(L_{\exp(tu)}^{\rhd}
			L_{\exp(sv)}^{\rhd}\exp(rw)-L_{\exp(sv)}^{\rhd}L_{\exp(tu)}^{\rhd}\exp(rw)\Big)\\
			=&\ \left.\dfrac{\dd}{\dd t}\right|_{t=0} \left.\dfrac{\dd}{\dd s}\right|_{s=0}
			\left.\dfrac{\dd}{\dd r}\right|_{r=0} \Big(L_{\exp(tu)\cdot_{(\mfrakL_{\frac{1}{\infty}},\mfrakH_{\frac{1}{\infty}})}
				\big(\exp(tu)\rhd\exp(sv)\big)}^{\rhd}\exp(rw)-L_{\exp(sv)
				\cdot_{(\mfrakL_{\frac{1}{\infty}},\mfrakH_{\frac{1}{\infty}})}\big(\exp(sv)\rhd\exp(tu)\big)}^{\rhd}\exp(rw)\Big)\\
			=&\ \left.\dfrac{\dd}{\dd t}\right|_{t=0} \left.\dfrac{\dd}{\dd s}\right|_{s=0}
			\left.\dfrac{\dd}{\dd r}\right|_{r=0} L_{\exp(tu)\cdot_{(\mfrakL_{\frac{1}{\infty}},\mfrakH_{\frac{1}{\infty}})}\exp(sv)}^{\rhd}\exp(rw)
			+\left.\dfrac{\dd}{\dd t}\right|_{t=0} \left.\dfrac{\dd}{\dd s}\right|_{s=0}
			\left.\dfrac{\dd}{\dd r}\right|_{r=0}  L_{\exp(tu)\rhd\exp(sv)}^{\rhd}\exp(rw)\\
			&- \left.\dfrac{\dd}{\dd t}\right|_{t=0}
			\left.\dfrac{\dd}{\dd s}\right|_{s=0} \left.\dfrac{\dd}{\dd r}\right|_{r=0}
			L_{\exp(sv)\cdot_{(\mfrakL_{\frac{1}{\infty}},\mfrakH_{\frac{1}{\infty}})}\exp(tu)}^{\rhd}\exp(rw)
			-\left.\dfrac{\dd}{\dd t}\right|_{t=0} \left.\dfrac{\dd}{\dd s}\right|_{s=0}
			\left.\dfrac{\dd}{\dd r}\right|_{r=0}L_{\exp(sv)\rhd\exp(tu)}^{\rhd}\exp(rw)\\
			=&\ \left.\dfrac{\dd}{\dd t}\right|_{t=0} \left.\dfrac{\dd}{\dd s}\right|_{s=0}
			\left.\dfrac{\dd}{\dd r}\right|_{r=0} L_{\exp\Big(\lim\limits_{n\to\infty}\MfrakH_{\frac{1}{n}}
				\big(t\MfrakL_{\frac{1}{n}}(u)+s\MfrakH_{\frac{1}{n}}(v)
				+\frac{1}{2}[t\MfrakL_{\frac{1}{n}}(u),s\mfrakL_{\frac{1}{n}}(v)]
				+\cdots\big)\Big)}^{\rhd}\exp(rw)\\
			&+ \left.\dfrac{\dd}{\dd t}\right|_{t=0} \left.\dfrac{\dd}{\dd s}\right|_{s=0}
			\left.\dfrac{\dd}{\dd r}\right|_{r=0}  L_{\exp(tu)\rhd\exp(sv)}^{\rhd}\exp(rw)\\
			-&\  \left.\dfrac{\dd}{\dd t}\right|_{t=0} \left.\dfrac{\dd}
			{\dd s}\right|_{s=0} \left.\dfrac{\dd}{\dd r}\right|_{r=0}
			L_{\exp\Big(\lim\limits_{n\to\infty}\MfrakH_{\frac{1}{n}}
				\big(s\MfrakH_{\frac{1}{n}}(v)+t\MfrakL_{\frac{1}{n}}(u)
				+\frac{1}{2}[s\MfrakH_{\frac{1}{n}}(v),t\MfrakL_{\frac{1}{n}}(u)]
				+\cdots\big)\Big)}^{\rhd}\exp(rw)\\
			&-  \left.\dfrac{\dd}{\dd t}\right|_{t=0} \left.\dfrac{\dd}
			{\dd s}\right|_{s=0} \left.\dfrac{\dd}{\dd r}\right|_{r=0}
			L_{\exp(sv)\rhd\exp(tu)}^{\rhd}\exp(rw)\\
			=&\  \frac{1}{2}\bigg(\lim_{n\to\infty}\MfrakH_{\frac{1}{n}}
			\Big([\MfrakL_{\frac{1}{n}}(u),\MfrakL_{\frac{1}{n}}(v)]\Big)\bigg)\xsj w
			+(u\xsj v)\xsj w  -\frac{1}{2}\bigg(\lim_{n\to\infty}
			\MfrakH_{\frac{1}{n}}\Big([\MfrakL_{\frac{1}{n}}(v),
			\MfrakL_{\frac{1}{n}}(u)]\Big)\bigg)\xsj w-(v\xsj u)\xsj w\\
			=&\ \bigg(\lim_{n\to\infty}\MfrakH_{\frac{1}{n}}
			\Big([\MfrakL_{\frac{1}{n}}(u),\MfrakL_{\frac{1}{n}}(v)]\Big)\bigg)\xsj w
			+(u\xsj v)\xsj w  -(v\xsj u)\xsj w\quad  u,v,w\in \frakg.
		\end{align*}
	}
Here the Lie bracket is induced from $\cdot$, so Eq.~\meqref{eq:postlimit} holds. Thus $(g,[\cdot,\cdot],\xsj)$ is a post-Lie algebra with limit-weight $\lim\limits_{n\to\infty}(\mfrakL_{\frac{1}{n}},\mfrakH_{\frac{1}{n}})$. Further, if $G$ is a pre-Lie group, applying Lemma~\mref{lemma:prec}, we obtain
\vspace{-.2cm}
\begin{align*}
u \xsj(v\xsj w)-v\xsj(u\xsj w) =&\ \bigg(\lim_{n\to\infty}\MfrakH_{\frac{1}{n}}
\Big([\MfrakL_{\frac{1}{n}}(u),\MfrakL_{\frac{1}{n}}(v)]\Big)\bigg)\xsj w
+(u\xsj v)\xsj w  -(v\xsj u)\xsj w\\
=&\ (u\xsj v)\xsj w-(v\xsj u)\xsj w,\quad  u,v,w\in \frakg.
\vspace{-.2cm}
\end{align*}
Thus $(g,\xsj)$ is a pre-Lie algebra.
\end{proof}

The follow construction is the group version of the fact that a Rota-Baxter Lie algebra with weight zero induces a pre-Lie algebra.

\begin{prop}
	Let $(G,\cdot)$ be an action-\complim $\lim\limits_{n\to\infty}(\mfrakL_{\frac{1}{n}},\mfrakH_{\frac{1}{n}})$-group for an action  $\RE:G\rightarrow \Aut(G)$ and $\frakB$ a relative Rota-Baxter operator with limit-weight $\lim\limits_{n\to\infty}(\mfrakL_{\frac{1}{n}},\mfrakH_{\frac{1}{n}})$.
	Define
	\begin{align*}
		\rhd:G\times G\rightarrow G,\quad
		(a,b)\mapsto \lim\limits_{n\to\infty}\mfrakH_{\frac{1}{n}}\RE_{\frakB(a)}\bigg(\Big(\mfrakL_{\frac{1}{n}}(b)\Big)\bigg)\,.
	\end{align*}
Then $(G,\cdot, \rhd)$ is a post-group of limit-weight $\lim\limits_{n\to\infty}(\mfrakL_{\frac{1}{n}},\mfrakH_{\frac{1}{n}})$. Further if $G$ is a $\lim\limits_{n\to\infty}(\mfrakL_{\frac{1}{n}},\mfrakH_{\frac{1}{n}})$-abelian group, then $(G,\cdot,\rhd)$ is a pre-group.
	\mlabel{prop:preRB}
\end{prop}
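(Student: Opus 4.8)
The plan is to verify that the triple $\big(G,\cdot_{(\mfrakL_{\frac{1}{\infty}},\mfrakH_{\frac{1}{\infty}})},\rhd\big)$ is a post-semigroup satisfying the normalizations $a\rhd e=e$ and $e\rhd a=a$, and then to invoke Definition~\mref{defn:pregroup}. The pre-group assertion will then be immediate, since being a $\lim\limits_{n\to\infty}(\mfrakL_{\frac{1}{n}},\mfrakH_{\frac{1}{n}})$-abelian group is precisely the extra hypothesis that turns a post-group with limit-weight into a pre-group in that definition. Observe first that $\big(G,\cdot_{(\mfrakL_{\frac{1}{\infty}},\mfrakH_{\frac{1}{\infty}})}\big)$ is already a semigroup by Lemma~\mref{lem:lgp1}, so only the two identities~\meqref{postgp1} and~\meqref{postgp2} for $\rhd$ and the unit conditions remain to be checked. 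All the computations below use only: each $\RE_{\frakB(a)}$ is a group automorphism of $(G,\cdot)$; the map $\RE$ is a group homomorphism; the relation $\mfrakL_{\frac{1}{n}}\mfrakH_{\frac{1}{n}}=\id$, which lets one insert $\mfrakL_{\frac{1}{n}}\mfrakH_{\frac{1}{n}}$ freely; the synchronized-limit identity~\meqref{eq:jx}; and the action-synchronized property of $G$.

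For~\meqref{postgp1}, I would start from $a\rhd\big(b\cdot_{(\mfrakL_{\frac{1}{\infty}},\mfrakH_{\frac{1}{\infty}})}c\big)$, expand the inner transported product by Eq.~\meqref{eq:transmul}, collapse the nested limit onto the diagonal using~\meqref{eq:jx} (so that $\mfrakL_{\frac{1}{n}}$ applied to the transported product becomes $\mfrakL_{\frac{1}{n}}(b)\mfrakL_{\frac{1}{n}}(c)$), use that $\RE_{\frakB(a)}$ distributes over products, reinsert $\mfrakL_{\frac{1}{n}}\mfrakH_{\frac{1}{n}}=\id$ inside each of the two factors, and apply~\meqref{eq:jx} a last time to recognize the result as $(a\rhd b)\cdot_{(\mfrakL_{\frac{1}{\infty}},\mfrakH_{\frac{1}{\infty}})}(a\rhd c)$. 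This is exactly the kind of limit bookkeeping carried out in the proof of Theorem~\mref{thm:rbdes1}.

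The crux is~\meqref{postgp2}, and this is where the relative Rota-Baxter equation enters essentially. The first step is to rewrite Eq.~\meqref{eq:relrbozero} (here $H=G$ and $\cdot_H=\cdot$) in the form
\begin{equation*}
\frakB(a)\,\frakB(b)=\frakB\big(a\cdot_{(\mfrakL_{\frac{1}{\infty}},\mfrakH_{\frac{1}{\infty}})}(a\rhd b)\big),
\end{equation*}
which follows by writing $\RE_{\frakB(a)}(\mfrakL_{\frac{1}{n}}(b))=\mfrakL_{\frac{1}{n}}\big(\mfrakH_{\frac{1}{n}}\RE_{\frakB(a)}(\mfrakL_{\frac{1}{n}}(b))\big)$, noting that $\mfrakH_{\frac{1}{n}}\RE_{\frakB(a)}(\mfrakL_{\frac{1}{n}}(b))$ converges to $a\rhd b$, and applying~\meqref{eq:jx}. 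Granting this, the left side of~\meqref{postgp2} equals $\lim\limits_{n\to\infty}\mfrakH_{\frac{1}{n}}\RE_{\frakB(a)\frakB(b)}\big(\mfrakL_{\frac{1}{n}}(c)\big)$; replacing $\RE_{\frakB(a)\frakB(b)}$ by $\RE_{\frakB(a)}\RE_{\frakB(b)}$ (homomorphism property of $\RE$), inserting $\mfrakL_{\frac{1}{n}}\mfrakH_{\frac{1}{n}}=\id$ between the two automorphisms, and then using the action-synchronized property of $G$ for the sequence $\mfrakH_{\frac{1}{n}}\RE_{\frakB(b)}(\mfrakL_{\frac{1}{n}}(c))$, which converges to $b\rhd c$, one arrives at $\lim\limits_{n\to\infty}\mfrakH_{\frac{1}{n}}\RE_{\frakB(a)}\big(\mfrakL_{\frac{1}{n}}(b\rhd c)\big)=a\rhd(b\rhd c)$, the right side of~\meqref{postgp2}. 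I expect this to be the main obstacle: one must make the rewriting of the Rota-Baxter identity precise and check that every diagonalization of an iterated limit is licensed --- by~\meqref{eq:jx} when the inner map is the transported multiplication, and by the action-synchronized property when the inner map has the form $b\mapsto\mfrakH_{\frac{1}{n}}\RE_{\frakB(a)}(\mfrakL_{\frac{1}{n}}(b))$.

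Finally, for the normalizations I would use that the pairs $(\mfrakL_{\frac{1}{n}},\mfrakH_{\frac{1}{n}})$ are unital: then $a\rhd e=\lim\limits_{n\to\infty}\mfrakH_{\frac{1}{n}}\RE_{\frakB(a)}(\mfrakL_{\frac{1}{n}}(e))=\lim\limits_{n\to\infty}\mfrakH_{\frac{1}{n}}(e)=e$, since $\RE_{\frakB(a)}$ fixes $e$; and evaluating the rewritten Rota-Baxter identity at $a=b=e$ gives $\frakB(e)\,\frakB(e)=\frakB(e)$, so $\frakB(e)=e$, $\RE_{\frakB(e)}=\id$, and $e\rhd a=\lim\limits_{n\to\infty}\mfrakH_{\frac{1}{n}}\mfrakL_{\frac{1}{n}}(a)=a$. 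Putting this together with the previous two paragraphs shows that $\big(G,\cdot_{(\mfrakL_{\frac{1}{\infty}},\mfrakH_{\frac{1}{\infty}})},\rhd\big)$ is a post-semigroup with $a\rhd e=e$ and $e\rhd a=a$, hence $(G,\cdot,\rhd)$ is a post-group with limit-weight $\lim\limits_{n\to\infty}(\mfrakL_{\frac{1}{n}},\mfrakH_{\frac{1}{n}})$ by Definition~\mref{defn:pregroup}; and when, in addition, $(G,\cdot)$ is a $\lim\limits_{n\to\infty}(\mfrakL_{\frac{1}{n}},\mfrakH_{\frac{1}{n}})$-abelian group, the same definition names it a pre-group.
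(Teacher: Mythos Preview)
Your proposal is correct and follows essentially the same route as the paper: verify Eqs.~\meqref{postgp1} and~\meqref{postgp2} by unpacking the transported product, collapsing nested limits via Eq.~\meqref{eq:jx} and the action-\complim property, and using that $\RE$ is a group homomorphism and each $\RE_{\frakB(a)}$ an automorphism; in particular, the key step for~\meqref{postgp2} is exactly the rewriting $\frakB(a)\frakB(b)=\frakB\big(a\cdot_{(\mfrakL_{\frac{1}{\infty}},\mfrakH_{\frac{1}{\infty}})}(a\rhd b)\big)$ that the paper uses. You are in fact more thorough than the paper's own proof, which checks only~\meqref{postgp1} and~\meqref{postgp2} and omits the normalizations $a\rhd e=e$ and $e\rhd a=a$; your remark that these require the pairs to be unital (and that $\frakB(e)=e$ then follows from the Rota-Baxter identity) is a point the paper leaves implicit.
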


\begin{proof}
	Let $a,b,c\in G$. The two defining conditions in Eqs.~(\mref{postgp1}) and~(\mref{postgp2}) follow respectively from
	{\small
		\begin{align*}
			a\rhd(b\cdot_{(\mfrakL_{\frac{1}{\infty}},\mfrakH_{\frac{1}{\infty}})}c)& =\lim_{n\to\infty}
			\mfrakH_{\frac{1}{n}}\Bigg(\RE_{\frakB(a)}\bigg(\mfrakL_{\frac{1}{n}}
			\bigg(\lim_{m\to\infty}\mfrakH_{\frac{1}{m}}
			\Big(\mfrakL_{\frac{1}{m}}(b)
			\mfrakL_{\frac{1}{m}}(c)\Big)\bigg)\bigg)\Bigg)\\
			&= \lim_{n=m\to\infty}\mfrakH_{\frac{1}{n}}
			\Bigg(\RE_{\frakB(a)}\bigg(\mfrakL_{\frac{1}{n}}\bigg(\lim_{m\to\infty}\mfrakH_{\frac{1}{m}}
			\Big(\mfrakL_{\frac{1}{m}}(b)\mfrakL_{\frac{1}{m}}(c))\bigg)\bigg)\Bigg)\ \ \text{(by Eq.~(\mref{eq:jx}))}\\
			&= \lim_{n\to\infty}\mfrakH_{\frac{1}{n}}\bigg(\RE_{\frakB(a)}\Big(\mfrakL_{\frac{1}{n}}(b)
			\mfrakL_{\frac{1}{n}}(c)\Big)\bigg)= \lim_{n\to\infty}\mfrakH_{\frac{1}{n}}\bigg(\RE_{\frakB(a)}\Big(\mfrakL_{\frac{1}{n}}(b)
			\Big)\RE_{\frakB(a)}\Big(\mfrakL_{\frac{1}{n}}(c)
			\Big)\bigg)\\
			&= \Bigg(\lim_{n\to\infty}\mfrakH_{\frac{1}{n}}\bigg(\RE_{\frakB(a)}\Big(\mfrakL_{\frac{1}{n}}(b)
			\Big)\bigg)\Bigg)
			\cdot_{(\mfrakL_{\frac{1}{\infty}},\mfrakH_{\frac{1}{\infty}})}\Bigg(\lim_{n\to\infty}
			\mfrakH_{\frac{1}{n}}\bigg(\RE_{\frakB(a)}\Big(\mfrakL_{\frac{1}{n}}(c)
			\Big)\bigg)\Bigg)\\
			&= (a\rhd b)\cdot_{(\mfrakL_{\frac{1}{\infty}},\mfrakH_{\frac{1}{\infty}})}(a\rhd c)
		\end{align*}
	}
	and
	{\small
		\begin{align*}
			&\Big(a\cdot_{(\mfrakL_{\frac{1}{\infty}},\mfrakH_{\frac{1}{\infty}})}(a\rhd b)\Big)\rhd c
			=\Bigg(\lim_{m\to\infty}\mfrakH_{\frac{1}{m}}\bigg(
			\mfrakL_{\frac{1}{m}}(a)\RE_{\frakB(a)}\Big(\mfrakL_{\frac{1}{n}}(b)
			\Big)\bigg)\Bigg)\rhd c\\
			&= \lim_{n\to\infty}\mfrakH_{\frac{1}{n}}
			\Bigg(\RE_{\frakB\bigg(\lim\limits_{m\to\infty}\mfrakH_{\frac{1}{m}}
				\Big(\mfrakL_{\frac{1}{m}}(a)\RE_{\frakB(a)}\big(\mfrakL_{\frac{1}{n}}(b)
				\big)\Big)\bigg)}\Big(\mfrakL_{\frac{1}{n}}(c)\Big)\Bigg)\\
			&= \lim_{n\to\infty}\mfrakH_{\frac{1}{n}}\Bigg(\RE_{\frakB(a)\frakB(b)}\bigg(
			\mfrakL_{\frac{1}{n}}(c)\bigg)\Bigg)= \lim_{n\to\infty}\mfrakH_{\frac{1}{n}}\Bigg(\RE_{\frakB(a)}\bigg(\RE_{\frakB(b)}\Big(
			\mfrakL_{\frac{1}{n}}(c)\Big)\bigg)\Bigg)\\
			&\hspace{1cm} \text{(by the limit-action propperty and Eq.~(\mref{eq:jx}))}\\
			&= a\rhd(b\rhd c). \qedhere
		\end{align*}
	}
\end{proof}

We now give an example of Rota-Baxter groups with weight zero and hence an example of pre-groups. Note that the group itself is not abelian.

\begin{prop}
	Let $\frakg=\{ (a_{ij}) \in \mathbb{C}^{3 \times 3} \mid a_{ij} = 0\,\text{ for }\,i\geqslant j\}$ be a nilpotent matrix Lie algebra with a Rota-Baxter operator $B$ with weight zero on $\frakg$ satisfying $\frakB([\frakg,\frakg])\subseteq [\frakg,\frakg]$ and $G:=\exp(\frakg)$ be the simply connected nilpotent analytic Lie group whose tangent space is $\frakg$.
\begin{enumerate}
\item \cite[Proposition 3.18]{GGH}	Define a map
	\begin{equation*}
		\frakB:G\rightarrow G, \quad \exp(u)\mapsto \exp\bigg(B(u)+\frac{1}{2}B\Big([u,B(u)]\Big)\bigg).
		\label{eq:exafb}
	\end{equation*}
	Then $(G,\frakB)$ is a Rota-Baxter group with limit-weight zero.
\mlabel{it:rbpl1}
\item
Define a multiplication
\begin{align*}
	\rhd:G\times G\rightarrow G,\quad
	(\exp(u),\exp(v))\mapsto \lim\limits_{n\to\infty}\Big(\frakB(\exp(u))(\exp(v))^{\frac{1}{n}}\frakB(\exp(u))^{-1}\Big)^{n}, \quad u, v\in \frakg.
\end{align*}
Then $(G,\cdot,\rhd)$ is a pre-group, where the multiplication $\cdot$ is the matrix multiplication.
\mlabel{it:rbpl2}
\end{enumerate}
\mlabel{prop:rb33}
\end{prop}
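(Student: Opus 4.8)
The plan is to exhibit the pre-group of part~\meqref{it:rbpl2} as the structure produced by Proposition~\mref{prop:preRB} out of the Rota-Baxter group of part~\meqref{it:rbpl1}, taken with respect to conjugation. Fix $\mfrakH_{\frac1n}:=\pown:G\to G,\ a\mapsto a^n$, and $\mfrakL_{\frac1n}:=\pown^{-1}$ for $n\ge 1$. Since $\frakg$ is a nilpotent Lie algebra and $G=\exp(\frakg)$ is simply connected, $\exp:\frakg\to G$ is a bijection, so each $(\pown^{-1},\pown)$ is a bijective unital pair, and by Example~\mref{coro:0mult} $G$ is a complete $\lim\limits_{n\to\infty}(\pown^{-1},\pown)$-group with transported multiplication $\exp(X)\cdot_{\frac{1}{\infty}}\exp(Y)=\exp(X+Y)$. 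This transported multiplication is commutative, so $(G,\cdot)$ is a $\lim\limits_{n\to\infty}(\pown^{-1},\pown)$-abelian group; this supplies the ``abelian'' hypothesis required by Proposition~\mref{prop:preRB} for the pre-group conclusion.

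Next I would check that $G$, acting on itself by conjugation $\RE:G\to\Aut(G)$, $a\mapsto\RE_a$ with $\RE_a(b):=aba^{-1}$, is an action-\complim $\lim\limits_{n\to\infty}(\pown^{-1},\pown)$-group for $\RE$. Using the identities $(\exp z)^n=\exp(nz)$ and $a\exp(y)a^{-1}=\exp(\ad_a y)$, valid in any Lie group, for $a=\exp(x)$ and $b=\exp(y)$ one gets $\mfrakH_{\frac1n}\RE_a(\mfrakL_{\frac1n}(b))=\big(a\exp(y/n)a^{-1}\big)^n=\exp(\ad_a y)=aba^{-1}$, which does not depend on $n$; hence this limit exists (as a constant sequence), and for any sequence $b_m\to b$ it is unchanged by continuity of $\ad_a$ and $\exp$, which is exactly the action-\complim condition. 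The same computation with $a$ replaced by $\frakB(\exp(u))$ identifies the product $\rhd$ of Proposition~\mref{prop:preRB}, $(a,b)\mapsto\lim\limits_{n\to\infty}\mfrakH_{\frac1n}\RE_{\frakB(a)}(\mfrakL_{\frac1n}(b))$, with the one in the statement: $\rhd(\exp(u),\exp(v))=\lim\limits_{n\to\infty}\big(\frakB(\exp(u))(\exp(v))^{\frac1n}\frakB(\exp(u))^{-1}\big)^n=\frakB(\exp(u))\exp(v)\frakB(\exp(u))^{-1}$, the sequence being constant in $n$, so $\rhd$ is well defined.

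It remains to feed the correct Rota-Baxter operator into Proposition~\mref{prop:preRB}. By part~\meqref{it:rbpl1} (that is, \cite[Proposition~3.18]{GGH}), $\frakB$ is a Rota-Baxter operator with limit-weight zero on $G$ for the pair $(\pown^{-1},\pown)$; by the Example following Definition~\mref{defn:relrb}, taken with the adjoint (conjugation) action this is precisely a relative Rota-Baxter operator with limit-weight $\lim\limits_{n\to\infty}(\pown^{-1},\pown)$ on $(G,\cdot)$ for $\RE$. Proposition~\mref{prop:preRB} therefore applies: $(G,\cdot,\rhd)$ is a post-group of limit-weight $\lim\limits_{n\to\infty}(\pown^{-1},\pown)$, and, using the $\lim\limits_{n\to\infty}(\pown^{-1},\pown)$-abelianness of $(G,\cdot)$ from the first paragraph, the same proposition upgrades this to a pre-group, with $\cdot$ the matrix multiplication on $G$. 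The only point that needs care is the bookkeeping in Definition~\mref{defn:pregroup}: the post-semigroup axioms are verified against the transported multiplication $\cdot_{\frac{1}{\infty}}$, whereas the abelianness and the labels ``post-group'' and ``pre-group'' refer to the original multiplication $\cdot$; all of this is already handled inside Proposition~\mref{prop:preRB}, and no analytic estimate is needed because in a simply connected nilpotent Lie group every limit in sight is reached by a termwise constant sequence.
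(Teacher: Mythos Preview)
Your proof is correct and follows the same route as the paper: both reduce to Proposition~\mref{prop:preRB} applied to the conjugation action, checking the action-\complim hypothesis and the limit-abelian hypothesis separately, with Example~\mref{coro:0mult} supplying the latter. Your verification of the action-\complim condition is slightly cleaner than the paper's: you observe that $\mfrakH_{\frac1n}\RE_a\big(\mfrakL_{\frac1n}(b)\big)=\big(a\,b^{1/n}a^{-1}\big)^n=aba^{-1}$ is actually constant in $n$ (so the limit and the synchronization are trivial), whereas the paper expands via nilpotency of $\frakg$ and tracks a genuinely $n$-dependent sequence $\exp\!\big(n(\tfrac1n v_n+[B(u_n),\tfrac1n v_n])\big)$ before passing to the limit. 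One small notational slip: when you write $a\exp(y)a^{-1}=\exp(\ad_a y)$ you mean the group adjoint $\Ad_a$, not $\ad$.
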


\begin{proof}

\noindent
\meqref{it:rbpl2}
	To apply Proposition ~\mref{prop:preRB}, we verify that its conditions are satisfied.
	First, the existence of  $\lim\limits_{n\to\infty}(\frakB(a)b^{\frac{1}{n}}\frakB(a)^{-1})^{n}$ and the action-\complim property follows from
 {\small
	\begin{align*}
     &\lim\limits_{n\to\infty}\Big(\frakB(\exp(u_n))(\exp(v_n))^{\frac{1}{n}}\frakB(\exp(u_n))^{-1}\Big)^{n}\\
		=&\ \lim_{n\to\infty} \exp \Bigg(n\bigg(\frac{1}{n}v_n+[B(u_n),\frac{1}{n}v_n]\bigg)\Bigg)\hspace{1cm}\text{(by the nilpotent of $\frakg$)}\\
		=&\ \exp\Bigg(v+[B(u),v]\Bigg)\in G.
	\end{align*}}
	Next, $G$ being a $(\pown^{-1},\pown)$-abelian group follows from Example~\mref{coro:0mult} and
	\begin{align*}
		&\lim_{n\to\infty}\Big(\exp(nu)\exp(sv)\exp(-nu)\exp(-nv)\Big)^{n}=\lim_{n\to\infty}\exp\Big(n([\frac{1}{n}u,\frac{1}{n}v])\Big)=\exp(0).
	\end{align*}
	In conclusion, $(G,\cdot,\rhd)$ is a pre-group by Proposition ~\mref{prop:preRB}.
\end{proof}

\begin{remark}
	If the group in Proposition~\mref{prop:rb33} is a Rota-Baxter Lie group with weight zero, then the tangent map of $$\lim\limits_{n\to\infty}\Big(\frakB(\exp(u))\exp(v)^{\frac{1}{n}}\frakB(\exp(u))^{-1}\Big)^{n}$$ is $[\frakB(u),v]$. According to Eq.~(\mref{preLie2}), the Lie bracket defined by
	$$u\xsj v-v\xsj u=[\frakB(u),v]+[u,\frakB(v)]$$
	is exactly the Lie bracket of the descent Lie algebra of the Rota-Baxter Lie algebra with weight zero.
\end{remark}

\subsection{From limit-weighted Rota-Baxter groups to Yang-Baxter equation}
In this subsection, we utilize limit-weighted Rota-Baxter groups to produce set-theoretic solutions of the Yang-Baxter equation. Let us first recall the concept of a skew left brace.

\begin{defn}~\mcite{BG,GV}
	\begin{enumerate}
		\item A {\bf bigroup} $(G,\bullet,\circ)$ is a set $G$ with two binary operations $\bullet$ and $\circ$ such that $(G,\bullet)$ and $(G,\circ)$ are groups.
		
		\item    A bigroup $(G,\bullet, \circ)$ is called a {\bf skew left brace} if
		\begin{equation*}
			a\circ(b\bullet c) = (a\circ b)\bullet a^{-1} \bullet (a\circ c),\quad a,b,c \in G,
		\end{equation*}
		where $a^{-1}$ denotes the inverse of $a$ with respect to $\bullet$.
	\end{enumerate}
\end{defn}

A limited-weighted Rota-Baxter group with bijective pairs induces a skew left brace.

\begin{prop}\cite{BGST}
	Let $(G,\cdot,\rhd)$ be a post group. Then $(G,\ast , \cdot)$ is a skew left brace, where $*$ is defined in Lemma~\mref{lem:prere}.
	\mlabel{Prop: brace}
\end{prop}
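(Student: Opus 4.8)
The plan is to read off the skew left brace structure by setting $\bullet:=\cdot$ (the ``additive'' group) and $\circ:=\ast$ (the ``multiplicative'' group), and then to check the three defining conditions in order. The bigroup part is immediate: $(G,\cdot)$ is a group because $(G,\cdot,\rhd)$ is a post-group, and $(G,\ast)$ is a group by Lemma~\mref{lem:prere}. So the only substantive point is the skew left brace identity
\[
a\ast(b\cdot c) = (a\ast b)\cdot a^{-1}\cdot(a\ast c), \quad a,b,c\in G,
\]
where $a^{-1}$ denotes the inverse of $a$ in $(G,\cdot)$.

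I would verify this by expanding both sides through the definition $a\ast x = a\cdot(a\rhd x)$ from Lemma~\mref{lem:prere}. On the left, using that the left multiplication $L_a^{\rhd}$ is an endomorphism of $(G,\cdot)$, i.e.\ Eq.~\meqref{postgp1}, one gets
\[
a\ast(b\cdot c) = a\cdot\bigl(a\rhd(b\cdot c)\bigr) = a\cdot\bigl((a\rhd b)\cdot(a\rhd c)\bigr).
\]
On the right, expanding $a\ast b$ and $a\ast c$ and cancelling the factor $a^{-1}\cdot a$,
\[
(a\ast b)\cdot a^{-1}\cdot(a\ast c) = a\cdot(a\rhd b)\cdot a^{-1}\cdot a\cdot(a\rhd c) = a\cdot(a\rhd b)\cdot(a\rhd c).
\]
The two expressions coincide, so $(G,\ast,\cdot)$ is a skew left brace.

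There is no real obstacle here; the proof is a one-line computation once the conventions are fixed. The two bookkeeping points worth stressing are (i) the correct matching of operations with the roles of $\bullet$ and $\circ$ in the brace axiom, and (ii) that only the homomorphism property Eq.~\meqref{postgp1} of $L_a^{\rhd}$ is used in the identity, whereas the weighted associativity Eq.~\meqref{postgp2} enters only upstream, in Lemma~\mref{lem:prere}, to guarantee that $(G,\ast)$ is a group. This is the post-group analogue of the classical passage from a relative Rota-Baxter operator to a skew left brace, consistent with \mcite{BG,GV}; indeed by Lemma~\mref{lem:prere} the map $\id_G$ is precisely a relative Rota-Baxter operator on $(G,\ast)$ for the action $L^{\rhd}$, and the identity above is exactly the brace relation that such an operator produces.
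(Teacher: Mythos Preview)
Your argument is correct and is the standard one-line verification that a post-group yields a skew left brace. The paper itself gives no proof of this proposition---it is simply quoted from \mcite{BGST}---so there is nothing to compare your approach against.

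One point worth flagging: the proposition as printed writes the brace as $(G,\ast,\cdot)$, which in the paper's convention $(G,\bullet,\circ)$ would mean $\bullet=\ast$ and $\circ=\cdot$. You have instead proved the identity with $\bullet=\cdot$ and $\circ=\ast$, i.e.\ that $(G,\cdot,\ast)$ is a skew left brace. That is the statement that actually holds and is the one in \mcite{BGST}; it is also the version the paper uses downstream in the proof of Proposition~\mref{pp:slotoybe2}, where the brace is written $(G,\cdot_{\mfrakL_{\frac{1}{\infty}}},\ast)$ and $\Omega_a(b)=a^{-1}\cdot_{\mfrakL_{\frac{1}{\infty}}}(a\ast b)$ is computed accordingly. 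So the order of the two operations in the displayed proposition appears to be a typographical slip, and your silent correction is the right one. It would be worth saying so explicitly rather than leaving the reader to reconcile the orders.
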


Recall that a {\bf set-theoretical solution} to the Yang-Baxter equation (YBE) on a set $X$ is a bijective map $S: X \times X \rightarrow X \times X$ satisfying
\begin{equation*}
	(S\times \id)(\id \times S)(S \times \id) = (\id \times S)(S \times \id)(\id \times S).
\end{equation*}

Set-theoretical solutions of the YBE can be obtained from skew left braces as follows.

\begin{lemma}~\mcite{BG,GV}.
Let $(G,\bullet,\circ)$ be a skew left brace.
\begin{enumerate}
\item There is a group homomorphism
\begin{equation*}
\Omega : (G,\circ)\rightarrow \Aut(G,\bullet),\quad a \mapsto \Omega_{a}
\,\text{ with }\, \Omega_a(b):= a^{-1}\bullet(a\circ b) \,\text{ for }\, b\in G.
\mlabel{eq:psia}
\end{equation*}
Here $a^{-1}$ is the inverse of $a$ for the multiplication $\bullet$.
		\mlabel{it:soltoybe1}
		
\item The map
\begin{equation*}
S : G \times G \rightarrow G \times G ,\quad (a,b) \mapsto \bigg(\Omega_{a}(b), \Omega_{\Omega_{a}(b)}^{-1}\Big((a\circ b)^{-1}\bullet a\bullet(a\circ b)\Big)\bigg)
\end{equation*}
		is a non-degenerate set-theoretical solution to the YBE.
		\mlabel{it:soltoybe2}
	\end{enumerate}
	\mlabel{lem:soltoybe}
\end{lemma}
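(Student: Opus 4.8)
\smallskip
\noindent\textbf{Proof proposal.} Both parts are classical (see \cite{BG,GV}); the plan is to give a self-contained verification. For part (1), I would first record the standard fact that $1_\bullet=1_\circ$ in any skew left brace: applying the brace axiom to $a\circ(1_\bullet\bullet 1_\bullet)$ gives $a^{-1}\bullet(a\circ 1_\bullet)=1_\bullet$, hence $a\circ 1_\bullet=a$, so $1_\bullet$ is a two-sided $\circ$-identity. Next, $\Omega_a$ is a bijection, being the composite of $x\mapsto a\circ x$ with $x\mapsto a^{-1}\bullet x$, and it is a $\bullet$-homomorphism directly from the brace axiom:
\[
\Omega_a(b\bullet c)=a^{-1}\bullet\big(a\circ(b\bullet c)\big)=a^{-1}\bullet(a\circ b)\bullet a^{-1}\bullet(a\circ c)=\Omega_a(b)\bullet\Omega_a(c),
\]
so $\Omega_a\in\Aut(G,\bullet)$. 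Finally, to see $\Omega$ is a group homomorphism I would expand $\Omega_a\big(\Omega_b(c)\big)$ using the brace axiom on $a\circ\big(b^{-1}\bullet(b\circ c)\big)$, together with the relation $a^{-1}\bullet(a\circ b^{-1})\bullet a^{-1}=(a\circ b)^{-1}$ (read off from $a=a\circ 1_\bullet=a\circ(b\bullet b^{-1})$), to arrive at $(a\circ b)^{-1}\bullet\big((a\circ b)\circ c\big)=\Omega_{a\circ b}(c)$.

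For part (2) the plan has four stages. \emph{(i) Simplify $S$.} Writing $b':=\Omega_a(b)=a^{-1}\bullet(a\circ b)$, so that $a\circ b=a\bullet b'$, a short computation unwinding $\Omega_{b'}^{-1}$ rewrites the second component of $S(a,b)$ as $\Omega_a(b)^{-1}\circ(a\circ b)$, with inverse and product now in $(G,\circ)$; denote this by $\tau_b(a)$. This yields the fundamental relation
\begin{equation*}
\Omega_a(b)\circ\tau_b(a)=a\circ b,\qquad a,b\in G. \tag{$\star$}
\end{equation*}
\emph{(ii) Bijectivity and non-degeneracy.} From $(\star)$, if $S(a,b)=(x,y)$ then $a\circ b=x\circ y$, forcing $a=(x\circ y)\bullet x^{-1}$ (inverse in $\bullet$) and $b=a^{-1}\circ(x\circ y)$ (inverse in $\circ$); I would check this formula is a two-sided inverse of $S$, which also gives the non-degeneracy. \emph{(iii) The braid relation.} I would evaluate both $(S\times\id)(\id\times S)(S\times\id)$ and $(\id\times S)(S\times\id)(\id\times S)$ on a triple $(a,b,c)$ and compare the three components. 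The first components agree because, by the homomorphism property of $\Omega$ and $(\star)$, $\Omega_{\Omega_a(b)}\Omega_{\tau_b(a)}=\Omega_{\Omega_a(b)\circ\tau_b(a)}=\Omega_{a\circ b}=\Omega_a\Omega_b$. The third components agree by repeatedly substituting $(\star)$ and using associativity of $\circ$. The middle components require combining the $\bullet$-multiplicativity of each $\Omega_a$ with $(\star)$ and with the first/third identities just established. \emph{(iv) Conclude} that $S$ is a non-degenerate set-theoretic solution.

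I expect the middle-component identity of stage (iii) to be the main obstacle: unlike the other two it does not collapse to a single structural fact, but needs a careful interleaving of (a) each $\Omega_a$ being a $\bullet$-automorphism, (b) $a\mapsto\Omega_a$ being a $\circ$-homomorphism, and (c) the relation $(\star)$. A more conceptual alternative I might fall back on is to observe that $\id_G$ is a bijective $1$-cocycle $(G,\circ)\to(G,\bullet)$ for the action $\Omega$ and invoke the known passage from bijective $1$-cocycles (equivalently, matched pairs of groups) to non-degenerate set-theoretic solutions of the YBE, as in \cite{GV}; this reorganizes the bookkeeping but does not remove it.
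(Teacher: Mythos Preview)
The paper does not prove this lemma at all: it is quoted from the literature with the citation~\mcite{BG,GV} and used as a black box in the proof of Proposition~\mref{pp:slotoybe2}. So there is no ``paper's own proof'' to compare against; your proposal is a reconstruction of the standard argument from those references, and as such it is essentially correct and follows the route taken there. Your simplification in stage~(i), rewriting the second component of $S$ as $\Omega_a(b)^{\dagger}\circ(a\circ b)$ (inverse in $\circ$), is exactly the observation the present paper records just before Proposition~\mref{pp:slotoybe2} in the line $\Omega_a^{-1}(b)=a^{\dagger}\circ(a\bullet b)$, and the relation $(\star)$ is the key structural identity driving everything.

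One small point to tighten: in stage~(ii) you say that exhibiting an inverse for $S$ ``also gives the non-degeneracy''. Bijectivity of $S$ is not the same as non-degeneracy; the latter asks that each $\Omega_a$ and each $\tau_b$ be bijective separately. You already have $\Omega_a\in\Aut(G,\bullet)$, but the bijectivity of $a\mapsto\tau_b(a)$ for fixed $b$ needs its own check (in \mcite{GV} this is done by writing down the explicit inverse of $\tau_b$ using the brace structure, or by observing that $S^{-1}$ has the same shape as $S$ for the opposite skew brace). This is a genuine, if routine, step that your outline skips.
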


Notice that $\Omega_{a}^{-1}(b) = a^{\dagger} \circ (a\bullet b)$, where $a^{\dagger}$
is the inverse of $a$ with respect to $\circ$. Limit-weighted Rota-Baxter groups give rise to set-theoretical solutions to the Yang-Baxter equation as follows.

\begin{prop}
	\mlabel{pp:slotoybe2}
	 Let $G$ be an action \complim complete $\lim\limits_{n\to\infty}(\mfrakL_{\frac{1}{n}},\mfrakH_{\frac{1}{n}})$-group for the action  $\RE:G\rightarrow \Aut(G)$  with bijective pairs
	$(\mfrakL_{\frac{1}{n}},\mfrakH_{\frac{1}{n}}), n\in \mathbb{P}$. Let $\frakB$ be a relative Rota-Baxter operator with limit-weight $\lim\limits_{n\to\infty}(\mfrakL_{\frac{1}{n}},\mfrakH_{\frac{1}{n}})$ with respect to $\RE$ on $G$. Then
	\vspace{-.2cm}
	\begin{equation*}
		S: G \times G \rightarrow G\times G ,\quad
		(a,b)\mapsto \bigg(\Omega_{a}(b),\lim_{n\to\infty}\mfrakH_{\frac{1}{n}}\Big(\RE_{\frakB(\Omega_{a}(b))^{-1}}\ad_{\Omega_{a}(b)}\mfrakL_{\frac{1}{n}}(a)\Big)\bigg),
\vspace{-.2cm}
	\end{equation*}
	is a non-degenerate set-theoretical solution to the  Yang-Baxter equation, where
	$$\Omega_{a}(b)=\lim\limits_{n \to \infty }\mfrakH_{\frac{1}{n}}\bigg(\RE_{\frakB(a)}\Big(\mfrakL_{\frac{1}{n}}(b)\Big)\bigg).$$
\end{prop}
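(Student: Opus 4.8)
The plan is to run $\frakB$ through the chain of constructions already assembled: Proposition~\ref{prop:preRB} turns it into a post-group, Lemma~\ref{lem:prere} turns the post-group into a ``descent'' group on which $\id_G$ is a relative Rota-Baxter operator, Proposition~\ref{Prop: brace} repackages this as a skew left brace, and Lemma~\ref{lem:soltoybe} extracts from the brace a non-degenerate set-theoretical solution $S$ of the YBE. Non-degeneracy and the YBE relation are then automatic; the only real work is to check that the abstract $S$ produced this way is exactly the displayed map, i.e. to unwind every abstract operation back into the maps $\mfrakL_{\frac{1}{n}},\mfrakH_{\frac{1}{n}},\RE,\frakB$.

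First I would fix the honest group structures. Since $G$ is complete with bijective pairs, the transported multiplication $\cdot_{(\mfrakL_{\frac{1}{\infty}},\mfrakH_{\frac{1}{\infty}})}$ of Eq.~\eqref{eq:transmul} makes $G$ a group $(G,\odot)$ by Lemma~\ref{lem:lgp1}; write $\rhd$ for the multiplication of Proposition~\ref{prop:preRB}, so that $a\rhd b=\lim_n\mfrakH_{\frac{1}{n}}(\RE_{\frakB(a)}(\mfrakL_{\frac{1}{n}}(b)))$ is precisely the $\Omega_a(b)$ in the statement. By Definition~\ref{defn:pregroup}, ``$(G,\cdot,\rhd)$ is a post-group of limit-weight'' means exactly that $(G,\odot,\rhd)$ is a post-semigroup with $a\rhd e=e$ and $e\rhd a=a$, and since $(G,\odot)$ is a group this is an ordinary post-group. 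Applying Lemma~\ref{lem:prere} to $(G,\odot,\rhd)$ gives the group $(G,\ast)$ with $a\ast b=a\odot(a\rhd b)$ and with $\id_G$ a relative Rota-Baxter operator for $L^\rhd$; Proposition~\ref{Prop: brace} then yields the skew left brace whose underlying operations are the transported multiplication $\odot$ and the descent multiplication $\ast$, and Lemma~\ref{lem:soltoybe} produces the solution $S$ from it.

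It remains to match formulas, and two observations do most of the work. First, the brace map $\Omega$ of Lemma~\ref{lem:soltoybe} is $L^\rhd$: indeed $\Omega_a(b)=a^{-1}_\odot\odot(a\ast b)=a^{-1}_\odot\odot a\odot(a\rhd b)=a\rhd b$, which pins down the first coordinate of $S$, and since $L^\rhd\colon(G,\ast)\to\Aut(G,\odot)$ is a homomorphism it also gives $\Omega_g^{-1}=L^\rhd_{g^\dagger}$ with $g^\dagger$ the $\ast$-inverse of $g$. Second, $\frakB$ is a group homomorphism $(G,\ast)\to(G,\cdot)$: diagonalizing the iterated limit in $a\ast b=a\odot(a\rhd b)$ by means of Eq.~\eqref{eq:jx} and the action-synchronized property rewrites it as $a\ast b=\lim_n\mfrakH_{\frac{1}{n}}(\mfrakL_{\frac{1}{n}}(a)\,\RE_{\frakB(a)}\mfrakL_{\frac{1}{n}}(b))$ — the descent multiplication of Theorem~\ref{thm:rbdes1} — so the defining identity Eq.~\eqref{eq:relrbozero} is literally $\frakB(a)\frakB(b)=\frakB(a\ast b)$, whence $\frakB(g^\dagger)=\frakB(g)^{-1}$. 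Given these, the second coordinate $\Omega_g^{-1}\big((a\ast b)^{-1}_\odot\odot a\odot(a\ast b)\big)$ with $g=\Omega_a(b)$ simplifies: since $a\ast b=a\odot g$, the inner term is $g^{-1}_\odot\odot a\odot g$; applying $\Omega_g^{-1}=L^\rhd_{g^\dagger}$ and $\frakB(g^\dagger)=\frakB(g)^{-1}$ turns it into $\lim_n\mfrakH_{\frac{1}{n}}\big(\RE_{\frakB(g)^{-1}}\mfrakL_{\frac{1}{n}}(g^{-1}_\odot\odot a\odot g)\big)$, and substituting the Lemma~\ref{lem:lgp1} formulas for the $\odot$-inverse and the triple $\odot$-product and collapsing the resulting nested limits produces the displayed expression involving the conjugation $\ad_{\Omega_a(b)}$.

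I expect the main obstacle to be exactly this last bookkeeping. Three group structures — $(G,\cdot)$, $(G,\odot)$ with $\odot=\cdot_{(\mfrakL_{\frac{1}{\infty}},\mfrakH_{\frac{1}{\infty}})}$, and $(G,\ast)$ — and their three distinct inversions all occur simultaneously in the formula for $S$, and almost every step needs a fresh appeal to the synchronized-limit identity Eq.~\eqref{eq:jx} and to the action-synchronized hypothesis (and to completeness, which is what guarantees that the limits in question, in particular the one defining the second coordinate of $S$, actually exist) in order to diagonalize the iterated limits correctly. Everything else is a direct citation of Proposition~\ref{prop:preRB}, Lemma~\ref{lem:prere}, Proposition~\ref{Prop: brace} and Lemma~\ref{lem:soltoybe}.
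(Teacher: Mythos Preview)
Your proposal is correct and follows essentially the same route as the paper: build the skew left brace $(G,\cdot_{\mfrakL_{\frac{1}{\infty}}},\ast)$ via Proposition~\ref{Prop: brace} (which implicitly packages the chain through Proposition~\ref{prop:preRB} and Lemma~\ref{lem:prere} that you spell out), then compute $\Omega_a(b)$, $\Omega_a^{-1}(b)$, and the $\odot$-conjugate $(a\ast b)^{-1}\odot a\odot(a\ast b)$ explicitly in terms of $\mfrakL_{\frac{1}{n}},\mfrakH_{\frac{1}{n}},\RE,\frakB$, and feed the result into Lemma~\ref{lem:soltoybe}(\ref{it:soltoybe2}). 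Your additional remarks that $\frakB:(G,\ast)\to(G,\cdot)$ is a homomorphism (hence $\frakB(g^\dagger)=\frakB(g)^{-1}$) and that $\Omega=L^\rhd$ are exactly the facts the paper uses to collapse its formula for $\Omega_a^{-1}$; the paper just presents the computations more tersely.
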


\begin{proof}
	By Proposition~\mref{Prop: brace}, $(G,\cdot_{\mfrakL_{\frac{1}{\infty}}},\ast)$ is a skew left brace with
	$$ a \ast b := \lim\limits_{n\to \infty}\mfrakH_{\frac{1}{n}}\bigg(\mfrakL_{\frac{1}{n}}(a)\RE_{\frakB(a)}\Big(\mfrakL_{\frac{1}{n}}(b)\Big)\bigg).$$
	Now
	\begin{align*}
		\Omega_{a}(b)=&\ \lim\limits_{n \to \infty }\mfrakH_{\frac{1}{n}}\bigg(\RE_{\frakB(a)}\Big(\mfrakL_{\frac{1}{n}}(b)\Big)\bigg),\\
		\Omega_{a}^{-1}(b) =&\  a^{\dagger} \ast (a  \cdot_{\mfrakL_{\frac{1}{\infty}}}b)=\lim_{n\to\infty}\mfrakH_{\frac{1}{n}}\Big(\RE_{\frakB(a)^{-1}}\mfrakL_{\frac{1}{n}}(b)\Big),\\
		(a\ast b)^{-1}\cdot_{\mfrakL_{\frac{1}{\infty}}} a\cdot_{\mfrakL_{\frac{1}{\infty}}}(a\ast b)=&\ \lim_{n\to\infty}\mfrakH_{\frac{1}{n}}\Big(\ad_{\frakB(a)\mfrakL(b)\frakB(a)^{-1}}\mfrakL_{\frac{1}{n}}(a)\Big)
		=\lim_{n\to\infty}\mfrakH_{\frac{1}{n}}\Big(\ad_{\Omega_{a}(b)}\mfrakL_{\frac{1}{n}}(a)\Big).
	\end{align*}
	Thus
	\begin{align*}
		\Omega_{\Omega_{a}(b)}^{-1}\Big((a\ast b)^{-1}\cdot_{\mfrakL_{\frac{1}{\infty}}} a\cdot_{\mfrakL_{\frac{1}{\infty}}}(a\ast b)\Big)=&\lim_{n\to\infty}\mfrakH_{\frac{1}{n}}\Bigg(\RE_{\frakB(\Omega_{a}(b))^{-1}}\mfrakL_{\frac{1}{n}}
		\bigg(\mfrakH_{\frac{1}{n}}\Big(\ad_{\Omega_{a}(b)}\mfrakL_{\frac{1}{n}}(a)\Big)\bigg)\Bigg)\\
		=&\ \lim_{n\to\infty}\mfrakH_{\frac{1}{n}}\Big(\RE_{\frakB(\Omega_{a}(b))^{-1}}\ad_{\Omega_{a}(b)}\mfrakL_{\frac{1}{n}}(a)\Big),
	\end{align*}
	and so the result is valid by Lemma~\mref{lem:soltoybe}~(\mref{it:soltoybe2}).
\end{proof}
\vspace{-.3cm}

\section{Relative differential operators and Novikov structures on groups and Lie algebras}
\mlabel{sec:nov}
This section introduces the notions of differential operators on groups and Lie algebras with limit-weights, and of Novikov groups and Novikov Lie algebra. The expected relations among them are established, as summarized in the diagram in Figure~\ref{fig:diffalggp} in page~\pageref{fig:diffalggp}.

We first give the notion of limit-weighted relative differential operator on groups and Lie algebras.

\begin{defn}
\mlabel{defn:reldf}
Let $(H,\cdot_{H})$ be a group and $(G,\cdot_{G})$ be an action-\complim $\lim\limits_{n\to\infty}(\mfrakL_{\frac{1}{n}},\mfrakH_{\frac{1}{n}})$-group for an action $\RE:H\rightarrow \Aut(G)$. A map $\frakD:H\rightarrow G$ is called a {\bf relative differential operator with limit-weight $\lim\limits_{n\to\infty}(\mfrakL_{\frac{1}{n}},\mfrakH_{\frac{1}{n}})$ on $(G,\cdot_{G})$} for the action $\RE$ if
\begin{equation*}
	\mlabel{eq:reldfzero}
	\frakD(a\cdot_{H}b)=\lim_{n\to\infty}
	\mfrakH_{\frac{1}{n}}\bigg(\mfrakL_{\frac{1}{n}}\frakD(a)
	\cdot_{G} \RE_{a} \Big(\mfrakL_{\frac{1}{n}}\frakD(b)\Big) \bigg), \quad  a,b\in H.
\end{equation*}
\end{defn}

\begin{defn}
\mlabel{defn:reliedf}
Let $(\frakh,[\cdot,\cdot]_{\frakh})$ be a Lie algebra and $(\frakg,[\cdot,\cdot]_{\frakg})$ an action-\complim $\lim\limits_{n\to\infty}(\MfrakL_{\frac{1}{n}},\MfrakH_{\frac{1}{n}})$-Lie  algebra for the Lie algebra action $\re:\frakh \rightarrow \Der(\frakg)$. A linear operator $D:\frakh\to \frakg$ is called a {\bf relative differential operator with limit-weight $\lim\limits_{n\to\infty}(\MfrakL_{\frac{1}{n}},\MfrakH_{\frac{1}{n}})$ on $\frakg$ for the action $\re$} if
\begin{equation*}
D([u,v]_{\frakh})
=\lim_{n\to\infty}\MfrakH_{\frac{1}{n}}\bigg(\re_{u}\Big(\MfrakL_{\frac{1}{n}}D(v)\Big)
-\re_{v}\Big(\MfrakL_{\frac{1}{n}}D(u)\Big)+[\MfrakL_{\frac{1}{n}}D(u),
\MfrakL_{\frac{1}{n}}D(v)]_{\frakg}\bigg),\quad   u, v\in \frakh.
    \end{equation*}
\end{defn}
\begin{remark}
    When we take $\MfrakL_{\frac{1}{n}}:=\frac{1}{n} \id_\frakg$ , $\MfrakH_{\frac{1}{n}}:=n\, \id_\frakg$ and adjoint as the action in Definition~\ref{defn:reliedf}, we can recover the notion of differential operator with weight zero.             \mlabel{rk:redf}
\end{remark}

We now show that the tangent map of a limit-weighted relative differential operator on a Lie group is a limit-weighted relative differential operator on the corresponding Lie algebra.

\begin{theorem}
Let $(H,\cdot_{H})$ be a Lie group and Lie group $(G,\cdot_{G})$ an action-\complim $\lim\limits_{n\to\infty}(\mfrakL_{\frac{1}{n}},\mfrakH_{\frac{1}{n}})$-group for the smooth action $\RE:H\rightarrow \Aut(G)$. Let $ \frakD:H\rightarrow G$ be a smooth relative differential operator with limit-weight $\lim\limits_{n\to\infty}(\mfrakL_{\frac{1}{n}},\mfrakH_{\frac{1}{n}})$ for action $\RE$ with $\mfrakL,\mfrakH$ smooth and unital maps.  Let $\frakg = T_e G$ be the Lie algebra of $G$ where the topology on $\frakg$ is induced from $G$, and let
$$
D:=\frakD_{*e},\quad \MfrakL_{\frac{1}{n}}:=(\mfrakL_{\frac{1}{n}})_{*e},\quad \MfrakH_{\frac{1}{n}}:=(\mfrakH_{\frac{1}{n}})_{*e}
$$
be the tangent maps at $e$. Let $\frakh=T_eH$ be the Lie algebra of $H$ with the topology on $\frakh$ induced from $H$. Then $D:\frakh\rightarrow\frakg$ is a relative differential operator of limit-weight $\lim\limits_{n\to\infty}(\MfrakL_{\frac{1}{n}},\MfrakH_{\frac{1}{n}})$ for the action $\re:=\RE_{*}:\frakh\to \Der(\frakg)$.
\mlabel{thm:dgpdla}
\end{theorem}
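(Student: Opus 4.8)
The plan is to imitate the proof of Theorem~\mref{thm:tgrelim}, with the commutator identity now used to realize the \emph{target} Lie bracket. Using the formula $e^{tu}\cdot_H e^{sv}\cdot_H e^{-tu}=\exp\big(s\,\mathrm{Ad}_{e^{tu}}v\big)$ in $H$ and the fact that the curve $t\mapsto e^{tu}\cdot_H e^{-tu}$ is constantly $e$, the left-hand side of the identity in Definition~\mref{defn:reliedf} can be written as
\begin{equation*}
D([u,v]_\frakh)=\frakD_{*e}\Big(\left.\tfrac{\dd}{\dd t}\right|_{t=0}\mathrm{Ad}_{e^{tu}}v\Big)=\left.\frac{\dd^2}{\dd t\,\dd s}\right|_{t,s=0}\frakD\big(e^{tu}\cdot_H e^{sv}\cdot_H e^{-tu}\big).
\end{equation*}
Before computing, I would record that $\frakD(e)=e$: putting $a=b=e$ in the defining identity of Definition~\mref{defn:reldf} and using $\RE_e=\id$ exhibits $\frakD(e)$ as an idempotent for the transported multiplication on $G$, which forces $\frakD(e)=e$ since the pairs $(\mfrakL_{\frac{1}{n}},\mfrakH_{\frac{1}{n}})$ are unital. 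Moreover, applying Proposition~\mref{prop:leiswl} to the smooth action $\RE\colon H\to\Aut(G)$, the Lie algebra $\frakg=T_eG$ is an action-\complim $\lim\limits_{n\to\infty}(\MfrakL_{\frac{1}{n}},\MfrakH_{\frac{1}{n}})$-Lie algebra for $\re=\RE_{*e}\colon\frakh\to\Der(\frakg)$, so the right-hand side of the identity in Definition~\mref{defn:reliedf} makes sense.

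Next I would unwind $\frakD(e^{tu}\cdot_H e^{sv}\cdot_H e^{-tu})$ by applying the group-level differential identity twice — first with $a=e^{tu}\cdot_H e^{sv}$ and $b=e^{-tu}$, then expanding $\frakD(e^{tu}\cdot_H e^{sv})$ itself — and collapsing the resulting iterated limit to the diagonal by the action-\complim property, i.e.\ Eq.~\meqref{eq:jx} together with $\mfrakL_{\frac{1}{n}}\mfrakH_{\frac{1}{n}}=\id_G$, exactly as in the proof of Theorem~\mref{thm:rbdes1}. This yields
\begin{equation*}
\frakD\big(e^{tu}\cdot_H e^{sv}\cdot_H e^{-tu}\big)=\lim_{n\to\infty}\mfrakH_{\frac{1}{n}}\Big(\mfrakL_{\frac{1}{n}}\frakD(e^{tu})\cdot_G\RE_{e^{tu}}\big(\mfrakL_{\frac{1}{n}}\frakD(e^{sv})\big)\cdot_G\RE_{e^{tu}\cdot_H e^{sv}}\big(\mfrakL_{\frac{1}{n}}\frakD(e^{-tu})\big)\Big),
\end{equation*}
and, after interchanging $\frac{\dd^2}{\dd t\,\dd s}|_{t,s=0}$ with $\lim\limits_{n\to\infty}$, it remains to differentiate the $n$-th summand.

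For the $n$-th summand I would expand via the Baker--Campbell--Hausdorff formula to second order, as in the proofs of Proposition~\mref{prop:rbpre} and Theorem~\mref{thm:tangprelie}. Since $\frakD(e)=e$ and the pairs are unital, all three $G$-factors equal $e$ at $t=s=0$; their logarithms have linear parts $t\,\MfrakL_{\frac{1}{n}}D(u)$, $s\,\MfrakL_{\frac{1}{n}}D(v)$ and $-t\,\MfrakL_{\frac{1}{n}}D(u)$ respectively (here $(\mfrakL_{\frac{1}{n}})_{*e}=\MfrakL_{\frac{1}{n}}$, and $(\RE_{e^{tu}})_{*e}=\id+t\,\re_u+\cdots$, so $\RE_{e^{tu}}$ contributes nothing to the linear part of the middle factor), so the pure-in-$t$ linear parts of the first and third factors cancel — which also kills the second-order correction coming from the outer $\mfrakH_{\frac{1}{n}}$. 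Collecting the $\partial_t\partial_s|_{0,0}$-contribution: differentiating $\RE_{e^{tu}}$ against the middle factor gives $\re_u(\MfrakL_{\frac{1}{n}}D(v))$; differentiating the $e^{sv}$-dependence of $\RE_{e^{tu}\cdot_H e^{sv}}=\RE_{\exp(tu+sv+\cdots)}$ against the third factor gives $-\re_v(\MfrakL_{\frac{1}{n}}D(u))$; and the two surviving BCH cross-brackets, $\tfrac12[\MfrakL_{\frac{1}{n}}D(u),\MfrakL_{\frac{1}{n}}D(v)]_\frakg$ from pairing the first with the middle factor and $\tfrac12[\MfrakL_{\frac{1}{n}}D(u),\MfrakL_{\frac{1}{n}}D(v)]_\frakg$ from pairing the middle with the third, add up to $[\MfrakL_{\frac{1}{n}}D(u),\MfrakL_{\frac{1}{n}}D(v)]_\frakg$. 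Pulling out $(\mfrakH_{\frac{1}{n}})_{*e}=\MfrakH_{\frac{1}{n}}$, the $n$-th summand contributes $\MfrakH_{\frac{1}{n}}\big(\re_u(\MfrakL_{\frac{1}{n}}D(v))-\re_v(\MfrakL_{\frac{1}{n}}D(u))+[\MfrakL_{\frac{1}{n}}D(u),\MfrakL_{\frac{1}{n}}D(v)]_\frakg\big)$, and passing to the limit reproduces exactly the right-hand side of the identity in Definition~\mref{defn:reliedf}.

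I expect the main obstacle to be the bookkeeping in this last step: identifying which of the many second-order BCH terms of the triple product survive $\partial_t\partial_s|_{0,0}$, checking that the spurious double-derivation terms arising from the operator expansions of $(\RE_{e^{tu}})_{*e}$ and $(\RE_{e^{tu}\cdot_H e^{sv}})_{*e}$ — together with the bilinear $\tfrac{ts}{2}[u,v]_\frakh$ correction in the BCH product inside $H$ — all occur at bidegree higher than $(1,1)$, and that the two halves $\tfrac12$ combine into a single bracket; this sits alongside making the interchange of $\lim\limits_{n\to\infty}$ with the second derivative rigorous, which rests on the smoothness of all the maps and the \complim hypotheses just as in Theorems~\mref{thm:tgrelim} and~\mref{thm:tangprelie}.
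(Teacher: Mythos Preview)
Your proposal is correct and follows essentially the same route as the paper's proof: realize $D([u,v]_\frakh)$ as $\left.\dfrac{\dd^2}{\dd t\,\dd s}\right|_{t,s=0}\frakD(e^{tu}\cdot_H e^{sv}\cdot_H e^{-tu})$, apply the group-level differential identity twice to obtain the triple product under $\mfrakH_{\frac{1}{n}}$, swap limit and derivative, and read off the three Lie-algebra terms. Your write-up is in fact more careful than the paper's, which omits the justification of $\frakD(e)=e$, the diagonal collapse of the nested limit, and the BCH bookkeeping, jumping directly from the triple product to the final expression.
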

\begin{proof}
For $u, v \in \frakh$, we have
\begin{align*}
 D([ u, v  ]_{\frakh})
=&\  \frac{\mathrm{d}^2}{\mathrm{d}t \mathrm{d}s}\bigg|_{t,s=0} \frakD(e^{tu}\cdot_H e^{sv}\cdot_H e^{-tu})\\
=&\  \frac{\mathrm{d}^2}{\mathrm{d}t \mathrm{d}s}\bigg|_{t,s=0} \lim_{n\to\infty}
	\mfrakH_{\frac{1}{n}}\bigg(\mfrakL_{\frac{1}{n}}\frakD(e^{tu}\cdot_H e^{sv})
	\cdot_{G} \RE_{e^{tu}\cdot_H e^{sv}} \Big(\mfrakL_{\frac{1}{n}}\frakD(e^{-tu})\Big) \bigg)\\
=&\  \frac{\mathrm{d}^2}{\mathrm{d}t \mathrm{d}s}\bigg|_{t,s=0} \lim_{n\to\infty}
	\mfrakH_{\frac{1}{n}}\bigg(\mfrakL_{\frac{1}{n}}\frakD(e^{tu})
	\cdot_{G} \RE_{e^{tu}} \Big(\mfrakL_{\frac{1}{n}}\frakD(e^{sv})\Big)
	\cdot_{G} \RE_{e^{tu}\cdot_H e^{sv}} \Big(\mfrakL_{\frac{1}{n}}\frakD(e^{-tu})\Big) \bigg)\\
=&\ \lim_{n\to\infty} \frac{\mathrm{d}^2}{\mathrm{d}t \mathrm{d}s}\bigg|_{t,s=0}
	\mfrakH_{\frac{1}{n}}\bigg(\mfrakL_{\frac{1}{n}}\frakD(e^{tu})
	\cdot_{G} \RE_{e^{tu}} \Big(\mfrakL_{\frac{1}{n}}\frakD(e^{sv})\Big)
	\cdot_{G} \RE_{e^{tu}\cdot_H e^{sv}} \Big(\mfrakL_{\frac{1}{n}}\frakD(e^{-tu})\Big) \bigg)\\
=&\ \lim_{n\to\infty}\MfrakH_{\frac{1}{n}}\bigg(\re_{u}\Big(\MfrakL_{\frac{1}{n}}D(v)\Big)-\re_{v}\Big(\MfrakL_{\frac{1}{n}}D(u)\Big)+[\MfrakL_{\frac{1}{n}}D(u),\MfrakL_{\frac{1}{n}}D(v)]_{\frakg}\bigg).
\end{align*}
Then the result follows from Proposition~\mref{prop:leiswl}.
\end{proof}

We next give the notion of a Novikov group.
\begin{defn}
Let $G$ be an action-\complim $\lim\limits_{n\to\infty}(\mfrakL_{\frac{1}{n}},\mfrakH_{\frac{1}{n}})$-group for an action $\RE:G\rightarrow \Aut(G)$ and also a $\lim\limits_{n\to\infty}(\mfrakL_{\frac{1}{n}},\mfrakH_{\frac{1}{n}})$-abelian group. Let $\No$ be a binary operation on $G$. We call $(G,\No)$ a {\bf Novikov group with limit-weight $\lim\limits_{n\to\infty}(\mfrakL_{\frac{1}{n}},\mfrakH_{\frac{1}{n}})$ for the action $\RE$} if for each $a\in G$, we have $a\No e=e$ and for each $a,b,c\in G$, there are
\begin{equation}
a\No(bc)=(a\No b)\cdot_{(\mfrakL_{\frac{1}{\infty}},\mfrakH_{\frac{1}{\infty}})}\Big((ab)\No c\Big)
\mlabel{eq:alesbc}
\end{equation}
and
\begin{equation*}
\mlabel{eq:rets}
\lim\limits_{n\to\infty}\mfrakH_{\frac{1}{n}}\RE_a\Big(\mfrakL_{\frac{1}{n}}(b\No c)\Big)=(ab\No c).
\end{equation*}
If $G$ is in addition a $\mathbb{Q}$-group and also an action-\complim $\lim\limits_{n\to\infty}(\pown^{-1},\pown)$-group for the adjoint action, with $\pown$ defined in Eq.~\meqref{eq:powern}. Then we call $(G,\No)$ a {\bf Novikov group}.
\end{defn}

In analog to the Gelfand-Dorfman construction~\mcite{GD} of Novikov algebras from differential operators on commutative algebras, we have the following relation.

\begin{prop}
\mlabel{pp:novidf}
Let $(G,\cdot)$ be an action-\complim $\lim\limits_{n\to\infty}(\mfrakL_{\frac{1}{n}},\mfrakH_{\frac{1}{n}})$-group for an action $\RE:G\rightarrow \Aut(G)$.
\begin{enumerate}
\item Let $\frakD:G\rightarrow G$ be a relative differential operator with limit-weight $\lim\limits_{n\to\infty}(\mfrakL_{\frac{1}{n}},\mfrakH_{\frac{1}{n}})$ on $(G,\cdot)$ for the action $\RE$. Define a multiplication $\No$ on $G$ by
\begin{equation}
\No: G \times G \rightarrow G,\quad (a,b)\mapsto a\No b:=\lim_{n\to\infty}\mfrakH_{\frac{1}{n}}\RE_{a} \Big(\mfrakL_{\frac{1}{n}}\frakD(b)\Big).
\mlabel{eq:ggles}
\end{equation}
Then $(G,\No)$ is a Novikov group with limit-weight $\lim\limits_{n\to\infty}(\mfrakL_{\frac{1}{n}},\mfrakH_{\frac{1}{n}})$ for the action $\RE$.
\mlabel{it:novidfa}
\item \mlabel{it:novidfb}
Let $(G,\No)$ be a Novikov group with limit-weight $\lim\limits_{n\to\infty}(\mfrakL_{\frac{1}{n}},\mfrakH_{\frac{1}{n}})$ for the action $\RE$. Then the map
$$\frakD:G\rightarrow G, \quad a\mapsto e\No a, $$
is a relative differential operator with limit-weight $\lim\limits_{n\to\infty}(\mfrakL_{\frac{1}{n}},\mfrakH_{\frac{1}{n}})$ on $(G,\cdot)$ for the action $\RE$.
\end{enumerate}
\end{prop}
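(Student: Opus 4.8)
The plan is to verify, for part~(\ref{it:novidfa}), the two defining identities of a Novikov group with limit-weight, and for part~(\ref{it:novidfb}), the defining identity of a relative differential operator; in both directions the computation is the same ``expand, collapse the iterated limit along the diagonal by Eq.~\eqref{eq:jx}, use that $\RE_a\in\Aut(G)$ and that $a\mapsto\RE_a$ is a homomorphism'' pattern that already drives the proofs of Theorem~\ref{thm:rbdes1} and Proposition~\ref{prop:preRB}.

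For part~(\ref{it:novidfa}): putting $a=b=e$ in the relative differential equation, using $\RE_e=\id_G$ and the unitality of the pairs, gives $\frakD(e)=\frakD(e)\cdot_{(\mfrakL_{\frac{1}{\infty}},\mfrakH_{\frac{1}{\infty}})}\frakD(e)$, so an idempotent argument in the transported group (exactly as in the proof of Theorem~\ref{thm:rbdes1}) yields $\frakD(e)=e$, whence $a\No e=\lim_{n\to\infty}\mfrakH_{\frac{1}{n}}\RE_a(\mfrakL_{\frac{1}{n}}(e))=e$. For Eq.~\eqref{eq:alesbc} I would substitute the differential equation for $\frakD(bc)$ into $a\No(bc)=\lim_{n\to\infty}\mfrakH_{\frac{1}{n}}\RE_a(\mfrakL_{\frac{1}{n}}\frakD(bc))$, collapse the resulting double limit to the diagonal using the action-synchronized property of $G$ (which simultaneously cancels $\mfrakL_{\frac{1}{n}}\mfrakH_{\frac{1}{n}}=\id_G$), pull $\RE_a$ through the product of $G$ and rewrite $\RE_a\RE_b=\RE_{ab}$, and then run the diagonal manipulation backwards to recognize the outcome as $(a\No b)\cdot_{(\mfrakL_{\frac{1}{\infty}},\mfrakH_{\frac{1}{\infty}})}\big((ab)\No c\big)$. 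The remaining identity is shorter: expand $b\No c$ inside $\lim_{n\to\infty}\mfrakH_{\frac{1}{n}}\RE_a(\mfrakL_{\frac{1}{n}}(b\No c))$, cancel $\mfrakL_{\frac{1}{n}}\mfrakH_{\frac{1}{n}}$ via Eq.~\eqref{eq:jx}, and use $\RE_a\RE_b=\RE_{ab}$ to arrive at $(ab)\No c$.

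For part~(\ref{it:novidfb}), with $\frakD(a):=e\No a$, apply Eq.~\eqref{eq:alesbc} with first argument $e$ to get $\frakD(ab)=(e\No a)\cdot_{(\mfrakL_{\frac{1}{\infty}},\mfrakH_{\frac{1}{\infty}})}(a\No b)=\frakD(a)\cdot_{(\mfrakL_{\frac{1}{\infty}},\mfrakH_{\frac{1}{\infty}})}(a\No b)$, and apply Eq.~\eqref{eq:rets} (with its three arguments specialized to $a$, $e$, $b$) to get $a\No b=(ae)\No b=\lim_{n\to\infty}\mfrakH_{\frac{1}{n}}\RE_a(\mfrakL_{\frac{1}{n}}\frakD(b))$. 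Substituting this and unfolding $\cdot_{(\mfrakL_{\frac{1}{\infty}},\mfrakH_{\frac{1}{\infty}})}$ with one more use of Eq.~\eqref{eq:jx} to cancel $\mfrakL_{\frac{1}{n}}\mfrakH_{\frac{1}{n}}$ gives exactly $\frakD(ab)=\lim_{n\to\infty}\mfrakH_{\frac{1}{n}}\big(\mfrakL_{\frac{1}{n}}\frakD(a)\cdot\RE_a(\mfrakL_{\frac{1}{n}}\frakD(b))\big)$, as required.

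The algebra itself is routine; the only point needing real care — and the expected main obstacle — is the limit bookkeeping. One must check that every single limit invoked exists (existence of $\lim_{n\to\infty}\mfrakH_{\frac{1}{n}}\RE_a(\mfrakL_{\frac{1}{n}}(x))$ for a fixed $x$ is precisely the action-synchronized hypothesis of $G$ for $\RE$, applied to the fixed element $\frakD(b)$; existence of the transported products comes from $G$ being a complete $\lim_{n\to\infty}(\mfrakL_{\frac{1}{n}},\mfrakH_{\frac{1}{n}})$-abelian group with unital pairs) and that each passage from a double limit to its diagonal is licensed by the synchronization in Eq.~\eqref{eq:jx}; once this is arranged, the identities fall out mechanically.
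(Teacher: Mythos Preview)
Your proposal is correct and follows essentially the same route as the paper: for part~(\ref{it:novidfa}) you substitute the differential identity into $a\No(bc)$, collapse the nested limit along the diagonal via the synchronized property, distribute $\RE_a$ and use $\RE_a\RE_b=\RE_{ab}$, then re-expand as a transported product---exactly the paper's computation; the second identity and part~(\ref{it:novidfb}) likewise match. You are in fact slightly more careful than the paper, which omits the verification of $a\No e=e$ entirely and, in part~(\ref{it:novidfb}), cites Eq.~\eqref{eq:ggles} where (as you correctly note) it is really Eq.~\eqref{eq:rets} specialized to $b=e$ that is being used.
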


\begin{proof}  Let $a,b,c\in G$.
\meqref{it:novidfa} It follows from
{\small
        \begin{align*}
        a\No (bc)=&\ \lim_{n\to\infty}\mfrakH_{\frac{1}{n}}\RE_{a} \Big(\mfrakL_{\frac{1}{n}}\frakD(bc)\Big)\\
        =&\ \lim_{n\to\infty}\mfrakH_{\frac{1}{n}}\RE_{a} \Bigg(\mfrakL_{\frac{1}{n}}\Bigg(\lim_{m\to\infty}
	\mfrakH_{\frac{1}{m}}\bigg(\mfrakL_{\frac{1}{m}}\frakD(b)
	   \RE_{b} \Big(\mfrakL_{\frac{1}{m}}\frakD(c)\Big) \bigg)\Bigg)\Bigg)\\
        =&\ \lim_{n\to\infty}\mfrakH_{\frac{1}{n}}\RE_{a} \Bigg(\bigg(\mfrakL_{\frac{1}{m}}\frakD(b)
	     \RE_{b} \Big(\mfrakL_{\frac{1}{m}}\frakD(c)\Big) \bigg)\Bigg)\\
        =&\ \lim_{n\to\infty}\mfrakH_{\frac{1}{n}}\Bigg(\RE_{a} \Big(\mfrakL_{\frac{1}{m}}\frakD(b)
	     \Big)\RE_{a}\bigg( \RE_{b} \Big(\mfrakL_{\frac{1}{m}}\frakD(c)\Big)\bigg)\Bigg)\\
        =&\ \lim_{n\to\infty}\mfrakH_{\frac{1}{n}}\bigg(\RE_{a} \Big(\mfrakL_{\frac{1}{m}}\frakD(b)
	     \Big) \RE_{ab} \Big(\mfrakL_{\frac{1}{m}}\frakD(c)\Big)\bigg)\\
        =&\ \Bigg(\lim\limits_{n\to\infty}\mfrakH_{\frac{1}{n}}\bigg(\RE_{a} \Big(\mfrakL_{\frac{1}{m}}\frakD(b)
	     \Big)\bigg)\Bigg) \cdot_{(\mfrakL_{\frac{1}{\infty}},\mfrakH_{\frac{1}{\infty}})} \Bigg(\lim\limits_{n\to\infty}\mfrakH_{\frac{1}{n}}\bigg(\RE_{ab} \Big(\mfrakL_{\frac{1}{m}}\frakD(c)\Big)\bigg)\Bigg)\\
        =&\ (a\No b)\cdot_{(\mfrakL_{\frac{1}{\infty}},\mfrakH_{\frac{1}{\infty}})}\Big((ab)\No c\Big)
    \end{align*}
}
and
    \begin{align*}
        \lim\limits_{n\to\infty}\mfrakH_{\frac{1}{n}}\RE_a\Big(\mfrakL_{\frac{1}{n}}(b\No c)\Big)=&
        \lim_{n\to\infty}\mfrakH_{\frac{1}{n}}\RE_{a} \Bigg(\mfrakL_{\frac{1}{n}}\Bigg(\lim_{m\to\infty}
	\mfrakH_{\frac{1}{m}}\bigg(\mfrakL_{\frac{1}{m}}\frakD(b)
	   \RE_{b} \Big(\mfrakL_{\frac{1}{m}}\frakD(c)\Big) \bigg)\Bigg)\Bigg)\\
=&\lim_{n\to\infty}\mfrakH_{\frac{1}{n}}\RE_{a} \Big(\mfrakL_{\frac{1}{n}}\frakD(bc)\Big)=
        (ab\No c).
    \end{align*}

\noindent
\meqref{it:novidfb}
It follows from
    \begin{align*}
        \frakD(ab)=&\ e\No(ab)\\
        =&\ (e\No a)\cdot_{(\mfrakL_{\frac{1}{\infty}},\mfrakH_{\frac{1}{\infty}})}\Big(a\No b\Big) \quad\quad \text{(by Eq.~(\mref{eq:alesbc}))}\\
        =&\ (e\No a)\cdot_{(\mfrakL_{\frac{1}{\infty}},\mfrakH_{\frac{1}{\infty}})}\bigg(\lim\limits_{n\to\infty}\mfrakH_{\frac{1}{n}}\RE_a\Big(\mfrakL_{\frac{1}{n}}(e\No b)\Big)\bigg) \quad\quad \text{(by Eq.~(\mref{eq:ggles}))}\\
        =&\ \lim_{n\to\infty}
	    \mfrakH_{\frac{1}{n}}\bigg(\mfrakL_{\frac{1}{n}}\frakD(a)
	      \RE_{a} \Big(\mfrakL_{\frac{1}{n}}\frakD(b)\Big) \bigg).  \qedhere
    \end{align*}
\end{proof}

As an application, we give an example of a Novikov group.

\begin{prop}
	\mlabel{noviex}
Let $\frakg=\{ (a_{ij}) \in \mathfrak{gl}_3(\RR)\mid a_{ij} = 0\, \text{ for }\,i\geqslant j\}$ be the nilpotent Lie algebra with a derivation $D$, and $G=\exp(\frakg)=I+\frakg$  be the simply connected nilpotent analytic Lie group $($the first Heisenberg group $H_1(\RR)$$)$ with the action
	\begin{align*}
		\RE:\ G \rightarrow \Aut(G),\quad  \exp(u)\mapsto \exp(u)\exp(v)\exp(-u).
	\end{align*}
	For each $\exp(u),\exp(v)\in G$, define
	$$\exp(u)\No \exp(v) :=\exp\Big(D(v)+\frac{1}{2}[v,D(v)]+\frac{1}{2}[u,D(v)]\Big).$$
	Then for $\mfrakH_{\frac{1}{n}}:=\pown$ defined in Eq.~\meqref{eq:powern}, the pair $(G,\No)$ is a Novikov group with limit-weight $\lim\limits_{n\to\infty}(\pown^{-1},\pown)$ for the action $\RE$.
\end{prop}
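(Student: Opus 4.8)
The plan is to deduce the statement from Proposition~\mref{pp:novidf}~\meqref{it:novidfa}: I would produce a relative differential operator $\frakD$ on $G$ with limit-weight $\lim_{n\to\infty}(\pown^{-1},\pown)$ for $\RE$ whose induced Novikov operation is the displayed $\No$.

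First I would record the ambient structure. As $G=\exp(\frakg)$ is simply connected and nilpotent, $\exp\colon\frakg\to G$ is a diffeomorphism, and the Lie product formula (the computation underlying Example~\mref{coro:0mult}, valid here over $\RR$) shows $G$ is a complete $\lim_{n\to\infty}(\pown^{-1},\pown)$-group with transported multiplication $\exp(X)\cdot_{(\mfrakL_{\frac{1}{\infty}},\mfrakH_{\frac{1}{\infty}})}\exp(Y)=\exp(X+Y)$. This is commutative, so $G$ is a $\lim_{n\to\infty}(\pown^{-1},\pown)$-abelian group. Next I would check that $G$ is action-\complim for the conjugation action $\RE$: for $a=\exp(u)$ and $b=\exp(v)$, nilpotency of $\frakg$ forces $\mathrm{Ad}_{\exp(u)}=\mathrm{id}+\mathrm{ad}_u$, so
\[
\lim_{n\to\infty}\pown\Big(\RE_a\big(\pown^{-1}(b)\big)\Big)=\lim_{n\to\infty}\exp\!\big(n\,\mathrm{Ad}_{\exp(u)}(v/n)\big)=\exp\big(v+[u,v]\big),
\]
which is independent of $n$; repeating the computation along sequences $u_n\to u$, $v_n\to v$ and using continuity of $\exp$ and of the bracket supplies the existence and \complim (and limit-action-\complim) conditions required by the definitions.

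The main step is to verify that
\[
\frakD\colon G\to G,\qquad \exp(v)\longmapsto\exp\!\Big(D(v)+\tfrac12[v,D(v)]\Big),
\]
is a relative differential operator with limit-weight $\lim_{n\to\infty}(\pown^{-1},\pown)$ for $\RE$, that is, $\frakD(\exp(u)\exp(v))=\lim_{n\to\infty}\pown\big(\pown^{-1}\frakD(\exp u)\cdot\RE_{\exp u}\big(\pown^{-1}\frakD(\exp v)\big)\big)$. The tools are: in the first Heisenberg group $\exp(X)\exp(Y)=\exp(X+Y+\tfrac12[X,Y])$; the derivation identity $D([X,Y])=[DX,Y]+[X,DY]$; and $\pown^{-1}(\exp X)=\exp(X/n)$, which makes the Baker--Campbell--Hausdorff correction in the product on the right of order $1/n^2$, so that it vanishes in the limit after applying $\pown$. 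Expanding both sides and using that every bracket in $\frakg$ lies in its centre---so nested brackets vanish, $[u,\,\cdot\,]$ kills any bracket, and antisymmetric pairs $[DX,Y]+[Y,DX]$ cancel---both sides collapse to the same finite exponential. With $\frakD$ at hand, Proposition~\mref{pp:novidf}~\meqref{it:novidfa} yields that the operation $a\No b:=\lim_{n\to\infty}\pown\RE_a\big(\pown^{-1}\frakD(b)\big)$ makes $(G,\No)$ a Novikov group with limit-weight $\lim_{n\to\infty}(\pown^{-1},\pown)$ for $\RE$, and unwinding $\mathrm{Ad}_{\exp(u)}=\mathrm{id}+\mathrm{ad}_u$ together with the centrality of $[v,D(v)]$ identifies this induced operation with the $\No$ displayed in the statement.

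I expect the only genuine obstacle to be the bookkeeping in the middle step: one must organise the nilpotency relations carefully---tracking which terms are central and hence drop out of iterated brackets and of conjugation, and confirming that the Baker--Campbell--Hausdorff tails are truly annihilated by the $\pown^{-1}/\pown$ rescaling so that each limit collapses to a finite exponential. Once this is arranged, the relative-differential-operator identity and the action-\complim conditions are routine to verify.
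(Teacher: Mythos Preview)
Your proposal is correct and follows essentially the same route as the paper: define $\frakD(\exp v)=\exp\big(D(v)+\tfrac12[v,D(v)]\big)$, check it is a relative differential operator with limit-weight $\lim_{n\to\infty}(\pown^{-1},\pown)$ for the conjugation action, apply Proposition~\mref{pp:novidf}~\meqref{it:novidfa}, and then identify the induced operation with the displayed $\No$ via the conjugation computation. The only difference is that the paper invokes \cite[Proposition~3.26]{GGH} for the relative differential operator property of $\frakD$, whereas you propose to verify it by hand using the two-step BCH formula and the $O(1/n^2)$ vanishing of the correction term; your direct verification is self-contained but otherwise equivalent.
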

\begin{proof}
	It follows from ~\cite[Proposition~3.26]{GGH} that the map $$\frakD:G\to G, \quad \exp(u)\mapsto \exp(D(u)+[u,\frac{1}{2}D(u)])$$
	is a relative differential operator with limit-weight zero. Notice that $$\lim\limits_{n\to\infty}\Bigg(\exp(u)\exp\bigg(\frac{1}{n}\Big(D(v)+\frac{1}{2}[v,D(v)]\Big)\bigg)\exp(-u)\Bigg)^{n}=\exp\Big(D(v)+\frac{1}{2}[v,D(v)]+\frac{1}{2}[u,D(v)]\Big).$$ Thus by Propsition~\mref{pp:novidf}, $(G,\No)$ is a Novikov group with limit-weight $\lim\limits_{n\to\infty}(\pown^{-1},\pown)$ for the action $\RE$.
\end{proof}

Now we give the notion of a Novikov Lie algebra.
Note that a Lie algebra $(\frakg, [\cdot, \cdot])$ with a derivation $D$ induces a magmatic algebra $(\frakg, \no)$ by setting $u\no v:= [u, D(v)]$~\mcite{KSO}.
Here we use a different approach to give another derived structure, named the Novikov Lie algebra $(\frakg, [\cdot, \cdot], \no)$. The key distinction is that the new structure keeps the Lie bracket $[\cdot, \cdot]$ to insure that a Novikov Lie algebra serves as a tangent space of a Novikov group (Theorem~\mref{thm:ngpnla}).

\begin{defn}
Let $(\frakg, [\cdot, \cdot])$ be an action-\complim $\lim\limits_{n\to\infty}(\MfrakL_{\frac{1}{n}},\MfrakH_{\frac{1}{n}})$-Lie algebra for an action $\re:\frakg\rightarrow \Der(\frakg)$ and also a  $\lim\limits_{n\to\infty}(\MfrakL_{\frac{1}{n}},\MfrakH_{\frac{1}{n}})$-abelian Lie algebra. Let $\no$ be a bilinear map on $\frakg$. We call $(\frakg,\no)$  a {\bf Novikov Lie algebra with limit-weight $\lim\limits_{n\to\infty}(\MfrakL_{\frac{1}{n}},\MfrakH_{\frac{1}{n}})$ for the action $\re$} if
\begin{align*}
 u\no ([v,w])=&\lim\limits_{n\to\infty}\MfrakH_{\frac{1}{n}}\bigg(\re_u\Big(\MfrakL_{\frac{1}{n}}(v\no w)\Big)\bigg)-\lim\limits_{n\to\infty}\MfrakH_{\frac{1}{n}}\bigg(\re_u\Big(\MfrakL_{\frac{1}{n}}(w\no v)\Big)\bigg),\\
([u,v])\no w=& \lim\limits_{n\to\infty}\MfrakH_{\frac{1}{n}}\bigg(\re_u\Big(\MfrakL_{\frac{1}{n}}(v\no w)\Big)\bigg)-\lim\limits_{n\to\infty}\MfrakH_{\frac{1}{n}}\bigg(\re_v\Big(\MfrakL_{\frac{1}{n}}\Big((u\no w)\Big)\bigg), \quad u, v, w\in \frakg.
\mlabel{novilie}
\end{align*}
A {\bf Novikov Lie algebra} is a Lie algebra $\frakg$ with a binary operation $\no$ on $\frakg$ satisfying
\begin{align*}
u\no ([v,w])=&[u,(v\no w)]-[u,(w\no v)],\\
[u,v]\no w=&[u,(v\no w)]-[v,(u\no w)],\quad u, v, w \in \frakg.
\end{align*}
\end{defn}

For the relation between these two notions of Novikov Lie algebras we have

\begin{prop}
Let $(\frakg,\no)$ be a Novikov Lie algebra with limit-weight $\lim\limits_{n\to\infty}(\MfrakL_{\frac{1}{n}},\MfrakH_{\frac{1}{n}})$ for an action $\re$. If $\MfrakL_{\frac{1}{n}}=\frac{1}{n}\,\id_\frakg,\MfrakH_{\frac{1}{n}}=n \,\id_\frakg$ and $\gamma$ is the adjoint action, then $\frakg$ is a Novikov Lie algebra.
\mlabel{pp:dervex}
\end{prop}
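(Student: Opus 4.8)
The plan is to specialize the two defining identities of a Novikov Lie algebra with limit-weight $\lim\limits_{n\to\infty}(\MfrakL_{\frac{1}{n}},\MfrakH_{\frac{1}{n}})$ to the prescribed data $\MfrakL_{\frac{1}{n}}=\frac{1}{n}\,\id_\frakg$, $\MfrakH_{\frac{1}{n}}=n\,\id_\frakg$, with $\re$ the adjoint action, and to verify that they collapse exactly to the two axioms of an ordinary Novikov Lie algebra. Since $\frakg$ is already a Lie algebra, the bracket $[\cdot,\cdot]$ needs no further scrutiny, so the whole task reduces to rewriting the right-hand sides of the two limit-weighted identities.

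The computation rests on one elementary observation: because each $\re_u$ is linear, for every $n\geq 1$ and every $x\in\frakg$ one has
\[
\MfrakH_{\frac{1}{n}}\Big(\re_u\big(\MfrakL_{\frac{1}{n}}(x)\big)\Big)=n\,\re_u\!\Big(\tfrac{1}{n}\,x\Big)=\re_u(x).
\]
Hence the sequence whose limit appears in the defining relations is in fact constant, and the limit equals $\re_u(x)=[u,x]$ since $\re$ is the adjoint action. (The same cancellation yields $\lim\limits_{n\to\infty}\MfrakH_{\frac{1}{n}}\big([\MfrakL_{\frac{1}{n}}(u),\MfrakL_{\frac{1}{n}}(v)]\big)=\lim\limits_{n\to\infty}\tfrac{1}{n}[u,v]=0$, and likewise the action-synchronized condition is automatic; so the limit-abelian and action-synchronized requirements built into the hypothesis hold for free, and no separate check is needed.)

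Substituting this into the first defining relation turns it into $u\no[v,w]=[u,v\no w]-[u,w\no v]$, and into the second it becomes $[u,v]\no w=[u,v\no w]-[v,u\no w]$, for all $u,v,w\in\frakg$. These are precisely the two identities in the definition of a Novikov Lie algebra, which finishes the argument.

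I do not anticipate a genuine obstacle: the proof is a direct specialization. The only point worth stating carefully is that $\MfrakH_{\frac{1}{n}}\circ\re_u\circ\MfrakL_{\frac{1}{n}}=\re_u$ holds on the nose for each $n$, by linearity of $\re_u$, rather than only after passing to the limit, so no convergence issue intervenes.
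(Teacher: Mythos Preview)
Your proof is correct and follows essentially the same approach as the paper: both substitute $\MfrakL_{\frac{1}{n}}=\frac{1}{n}\,\id_\frakg$, $\MfrakH_{\frac{1}{n}}=n\,\id_\frakg$, and $\re_u=[u,\cdot]$ into the two limit-weighted identities and observe that $n[u,\tfrac{1}{n}x]=[u,x]$ by linearity, so the limits are trivial and the axioms of a Novikov Lie algebra drop out. Your additional remark that the limit-abelian and action-synchronized hypotheses are automatically satisfied for this choice is a helpful clarification not made explicit in the paper.
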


\begin{proof}
This follows directly from
\begin{align*}
    u\no ([v,w])=&\ \lim\limits_{n\to\infty}\MfrakH_{\frac{1}{n}}\bigg(\re_u\Big(\MfrakL_{\frac{1}{n}}(v\no w)\Big)\bigg)-\lim\limits_{n\to\infty}\MfrakH_{\frac{1}{n}}\bigg(\re_u\Big(\MfrakL_{\frac{1}{n}}(w\no v)\Big)\bigg)\\
        =&\ \lim\limits_{n\to\infty}\Big(n [u,\frac{1}{n}v\no w]\Big)-\lim\limits_{n\to\infty}\Big(n [u,\frac{1}{n}w\no v]\Big)\\
   =&\ [u,(v\no w)]-[u,(w\no v)].
\end{align*}
Similarly, we have $[u,v]\no w=[u,(v\no w)]-[v,(u\no w)]$.
\end{proof}

We next show that a derivation on a Lie algebra gives rise to a Novikov Lie algebra, starting with the more general relation in the relative context.

\begin{prop}
\mlabel{pp:dernov}
Let $(\frakg,[\cdot,\cdot]_{\frakg})$ be an action-\complim $\lim\limits_{n\to\infty}(\MfrakL_{\frac{1}{n}},\MfrakH_{\frac{1}{n}})$-abelian Lie algebra for an action $\re:\frakg \rightarrow \Der(\frakg)$ and $D:\frakg\to \frakg$ be a  relative differential operator with limit-weight $\lim\limits_{n\to\infty}(\MfrakL_{\frac{1}{n}},\MfrakH_{\frac{1}{n}})$ on $\frakg$ for the action $\re$. Define the multiplication $\no$
    \begin{equation}
   \no: \frakg \times \frakg \rightarrow \frakg,\quad (u,v)\mapsto u\no v:=\lim_{n\to\infty}\MfrakH_{\frac{1}{n}}\bigg(\re_{u}\Big(\MfrakL_{\frac{1}{n}}D(v)\Big)\bigg).
    \end{equation}
Then $(\frakg,\no)$ is a Novikov Lie algebra with limit-weight $\lim\limits_{n\to\infty}(\MfrakL_{\frac{1}{n}},\MfrakH_{\frac{1}{n}})$ for the action $\re$.

In particular, if $\re$ is the adjoint action and $\MfrakL_{\frac{1}{n}}=\frac{1}{n}\,\id_\frakg$, $\MfrakH_{\frac{1}{n}}=n\,\id_\frakg$, then $(\frakg,\no)$ is a Novikov Lie algebra.
\end{prop}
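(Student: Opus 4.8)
The plan is to verify, by direct computation, the two defining identities of a Novikov Lie algebra with limit-weight for $(\frakg,\no)$ with the $\no$ given in the statement, proceeding exactly in the style of the proof of Proposition~\ref{prop:rbpre}. Everything is driven by two facts: the defining equation of the relative differential operator $D$ (Definition~\ref{defn:reliedf}) applied to suitable pairs, and the $\lim_{n\to\infty}(\MfrakL_{\frac{1}{n}},\MfrakH_{\frac{1}{n}})$-abelianness of $\frakg$, which makes every term of the shape $\lim_{n\to\infty}\MfrakH_{\frac{1}{n}}([\MfrakL_{\frac{1}{n}}(x),\MfrakL_{\frac{1}{n}}(y)])$ equal to $0$.

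First I would record the auxiliary identity $D([v,w]_{\frakg})=(v\no w)-(w\no v)$. Applying the defining equation of $D$ to $(v,w)$ writes $D([v,w]_{\frakg})$ as the limit of a sum of three terms: the bracket term $\lim_{n}\MfrakH_{\frac{1}{n}}([\MfrakL_{\frac{1}{n}}D(v),\MfrakL_{\frac{1}{n}}D(w)])$ vanishes by limit-abelianness, and the remaining two are by definition $v\no w$ and $w\no v$; since each of the three limits exists separately, the limit of the sum splits. With this identity in hand the first Novikov equation is one line: as $D([v,w])$ is a fixed element of $\frakg$ and $\MfrakL_{\frac{1}{n}},\re_u,\MfrakH_{\frac{1}{n}}$ are linear, $u\no[v,w]=\lim_{n}\MfrakH_{\frac{1}{n}}(\re_u(\MfrakL_{\frac{1}{n}}D([v,w])))=\lim_{n}\MfrakH_{\frac{1}{n}}(\re_u(\MfrakL_{\frac{1}{n}}(v\no w)))-\lim_{n}\MfrakH_{\frac{1}{n}}(\re_u(\MfrakL_{\frac{1}{n}}(w\no v)))$, the splitting being legitimate because each of the two right-hand limits exists by the action-synchronization hypothesis.

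For the second Novikov equation I would start from $([u,v])\no w=\lim_{n}\MfrakH_{\frac{1}{n}}(\re_{[u,v]}(\MfrakL_{\frac{1}{n}}D(w)))$ and use that $\re:\frakg\to\Der(\frakg)$ is a Lie algebra morphism, so $\re_{[u,v]}=\re_u\re_v-\re_v\re_u$ as operators on $\frakg$. On the other side, I would expand $v\no w=\lim_{m}\MfrakH_{\frac{1}{m}}(\re_v(\MfrakL_{\frac{1}{m}}D(w)))$ inside $\lim_{n}\MfrakH_{\frac{1}{n}}(\re_u(\MfrakL_{\frac{1}{n}}(v\no w)))$: the action-synchronization hypothesis replaces the argument $v\no w$ by the converging diagonal sequence, after which $\MfrakL_{\frac{1}{n}}\MfrakH_{\frac{1}{n}}=\id_{\frakg}$ collapses it to $\lim_{n}\MfrakH_{\frac{1}{n}}(\re_u\re_v(\MfrakL_{\frac{1}{n}}D(w)))$; symmetrically $\lim_{n}\MfrakH_{\frac{1}{n}}(\re_v(\MfrakL_{\frac{1}{n}}(u\no w)))=\lim_{n}\MfrakH_{\frac{1}{n}}(\re_v\re_u(\MfrakL_{\frac{1}{n}}D(w)))$. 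Subtracting and recombining the two limits, again valid since each exists, yields $\lim_{n}\MfrakH_{\frac{1}{n}}((\re_u\re_v-\re_v\re_u)(\MfrakL_{\frac{1}{n}}D(w)))=([u,v])\no w$, which is exactly the desired identity.

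For the ``in particular'' statement I would set $\MfrakL_{\frac{1}{n}}=\frac{1}{n}\id_{\frakg}$, $\MfrakH_{\frac{1}{n}}=n\,\id_{\frakg}$ and $\re$ the adjoint action; then $u\no v=\lim_{n}n\,[u,\frac{1}{n}D(v)]=[u,D(v)]$, the hypotheses of the proposition hold automatically (limit-abelianness follows from $\lim_{n}n[\frac{1}{n}u,\frac{1}{n}v]=\lim_{n}\frac{1}{n}[u,v]=0$, and a weight-zero derivation on $\frakg$ is a relative differential operator with these limit-weights by Remark~\ref{rk:redf}), and the limit-weighted Novikov identities specialize to the ordinary ones; alternatively one simply appeals to Proposition~\ref{pp:dervex}. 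The only delicate point throughout is the bookkeeping of when an iterated limit may be replaced by a diagonal one and when a limit of a sum may be distributed across the summands; both operations are licensed by the synchronized and action-synchronized hypotheses together with the separate existence of each participating limit, which ultimately comes from the relative differential operator equation and the definition of $\no$. I expect this bookkeeping --- routine rather than conceptual --- to be the main obstacle.
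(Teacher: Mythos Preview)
Your proposal is correct and follows essentially the same route as the paper's proof: expand $D([v,w])$ via Definition~\ref{defn:reliedf}, use limit-abelianness to kill the bracket term and identify the survivors as $v\no w-w\no v$ for the first identity; use $\re_{[u,v]}=\re_u\re_v-\re_v\re_u$ together with the action-\complim diagonal trick and $\MfrakL_{\frac{1}{n}}\MfrakH_{\frac{1}{n}}=\id$ for the second; and invoke Proposition~\ref{pp:dervex} for the special case. The only cosmetic difference is that you isolate the auxiliary identity $D([v,w])=(v\no w)-(w\no v)$ explicitly before substituting it, whereas the paper folds this into the chain of equalities.
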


\begin{proof}
    For $u, v,w\in \frakg$, we have
    \begin{align*}
        &u\no ([v,w])\\
        =&\ \lim\limits_{n\to\infty}\MfrakH_{\frac{1}{n}}\re_u\Big(\MfrakL_{\frac{1}{n}}D([v,w])\Big)\\
        =&\ \lim\limits_{n\to\infty}\MfrakH_{\frac{1}{n}}\re_u\bigg(\MfrakL_{\frac{1}{m}}\Big(\lim\limits_{m\to\infty}\MfrakH_{\frac{1}{m}}[\MfrakL_{\frac{1}{m}}D(v),w]\Big)\bigg)+\lim\limits_{n\to\infty}\MfrakH_{\frac{1}{n}}\re_u\bigg(\MfrakL_{\frac{1}{n}}\Big(\lim\limits_{m\to\infty}\MfrakH_{\frac{1}{m}}[v,\MfrakL_{\frac{1}{m}}D(w)]\Big)\bigg)\\
        =&\ \lim\limits_{n\to\infty}\MfrakH_{\frac{1}{n}}\bigg(\re_u\Big(\MfrakL_{\frac{1}{n}}(v\no w)\Big)\bigg)-\lim\limits_{n\to\infty}\MfrakH_{\frac{1}{n}}\bigg(\re_u\Big(\MfrakL_{\frac{1}{n}}(w\no v)\Big)\bigg),
    \end{align*}
    and
    \begin{align*}
        [u,v]\no w=&\lim\limits_{n\to\infty}\MfrakH_{\frac{1}{n}}\re_{[u,v]}\Big(\MfrakL_{\frac{1}{n}}D(w)\Big)\\
        =&\lim\limits_{n\to\infty}\MfrakH_{\frac{1}{n}}\re_{u}\Big(\re_v(\MfrakL_{\frac{1}{n}}D(w))\Big)-\lim\limits_{n\to\infty}\MfrakH_{\frac{1}{n}}\re_{v}\Big(\re_u(\MfrakL_{\frac{1}{n}}D(w))\Big) \\
        =&\lim\limits_{n\to\infty}\MfrakH_{\frac{1}{n}}\re_{u}\Bigg(\MfrakL_{\frac{1}{n}}\bigg(\lim\limits_{m\to\infty}\MfrakH_{\frac{1}{m}}\Big(\re_v(\MfrakL_{\frac{1}{m}}D(w))\bigg)\Bigg)-\lim\limits_{n\to\infty}\MfrakH_{\frac{1}{n}}\re_{v}\Bigg(\MfrakL_{\frac{1}{n}}\bigg(\lim\limits_{m\to\infty}\MfrakH_{\frac{1}{m}}\Big(\re_u(\MfrakL_{\frac{1}{m}}D(w))\bigg)\Bigg)\\
        =& \lim\limits_{n\to\infty}\MfrakH_{\frac{1}{n}}\bigg(\re_u\Big(\MfrakL_{\frac{1}{n}}(v\no w)\Big)\bigg)-\lim\limits_{n\to\infty}\MfrakH_{\frac{1}{n}}\bigg(\re_v\Big(\MfrakL_{\frac{1}{n}}\Big((u\no w)\Big)\bigg), \quad u, v, w\in \frakg,
    \end{align*}
as desired.

If $\re$ is the adjoint action and $\MfrakL_{\frac{1}{n}}=\frac{1}{n}\,\id_\frakg$, $\MfrakH_{\frac{1}{n}}=n\,\id_\frakg$, then by Proposition~\mref{pp:dervex}, $(\frakg,\no)$ is a Novikov Lie algebra.
\end{proof}

The tangent space of a Novikov group is a Novikov Lie algebra as follows.

\begin{theorem}
Let $G$ be a Lie group with a smooth multiplication $\No$ such that $(G, \No)$ is a Novikov group with limit-weight $\lim\limits_{n\to\infty}(\mfrakL_{\frac{1}{n}},\mfrakH_{\frac{1}{n}})$ for an action $\RE:G\rightarrow \Aut(G)$, and let $\mfrakL_{\frac{1}{n}},\mfrakH_{\frac{1}{n}},\RE$ be smooth maps and $(\mfrakL_{\frac{1}{n}},\mfrakH_{\frac{1}{n}})$ unital pairs. Let $\frakg$ be the Lie algebra of $G$ where the topology on $\frakg$ is induced from $G$ and $\re:= \RE_{*e}:\frakg \rightarrow \Der(\frakg), \MfrakL_{\frac{1}{n}}:=(\mfrakL_{\frac{1}{n}})_{*e}$, $\MfrakH_{\frac{1}{n}}:=(\mfrakH_{\frac{1}{n}})_{*e}$ and $\no:=\No_{*e}$. Then $\frakg$ is a Novikov Lie algebra for the action $\re$ with limit-weight $\lim\limits_{n\to\infty}(\MfrakL_{\frac{1}{n}},\MfrakH_{\frac{1}{n}})$. In particular, if $(G,\No)$ is a Novikov group, then $(\frakg,\no)$ is a Novikov Lie algebra.
\mlabel{thm:ngpnla}
\end{theorem}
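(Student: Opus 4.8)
The plan is to deduce the theorem from the Gelfand--Dorfman-type correspondence already established on the Lie-algebra side (Proposition~\mref{pp:dernov}) together with the tangent-map theorem for relative differential operators (Theorem~\mref{thm:dgpdla}), so as to avoid differentiating the two Novikov-group axioms directly. The guiding observation is that a Novikov group canonically carries a relative differential operator, its tangent map is again a relative differential operator, and the Gelfand--Dorfman construction applied to the latter returns precisely $\No_{*e}$.

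First I would introduce $\frakD\colon G\to G$, $a\mapsto e\No a$. Taking $a=e$ in $a\No e=e$ gives $e\No e=e$, so $\frakD$ is a smooth unital map; taking $b=e$ in the second Novikov-group identity $\lim_{n\to\infty}\mfrakH_{\frac{1}{n}}\RE_a(\mfrakL_{\frac{1}{n}}(b\No c))=(ab)\No c$ gives
\[
a\No c=\lim_{n\to\infty}\mfrakH_{\frac{1}{n}}\RE_a\Big(\mfrakL_{\frac{1}{n}}\frakD(c)\Big),\qquad a,c\in G .
\]
By Proposition~\mref{pp:novidf}~\meqref{it:novidfb}, $\frakD$ is then a relative differential operator with limit-weight $\lim_{n\to\infty}(\mfrakL_{\frac{1}{n}},\mfrakH_{\frac{1}{n}})$ on $(G,\cdot)$ for $\RE$. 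Because $G$ is an action-\complim $\lim_{n\to\infty}(\mfrakL_{\frac{1}{n}},\mfrakH_{\frac{1}{n}})$-group for the smooth action $\RE$ with unital pairs, Proposition~\mref{prop:leiswl} makes $\frakg=T_eG$ an action-\complim $\lim_{n\to\infty}(\MfrakL_{\frac{1}{n}},\MfrakH_{\frac{1}{n}})$-Lie algebra for $\re=\RE_{*e}$; because $G$ is moreover a $\lim_{n\to\infty}(\mfrakL_{\frac{1}{n}},\mfrakH_{\frac{1}{n}})$-abelian Lie group, Lemma~\mref{lemma:prec} makes $\frakg$ a $\lim_{n\to\infty}(\MfrakL_{\frac{1}{n}},\MfrakH_{\frac{1}{n}})$-abelian Lie algebra. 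Theorem~\mref{thm:dgpdla} now gives that $D:=\frakD_{*e}\colon\frakg\to\frakg$ is a relative differential operator with limit-weight $\lim_{n\to\infty}(\MfrakL_{\frac{1}{n}},\MfrakH_{\frac{1}{n}})$ for $\re$, and Proposition~\mref{pp:dernov} endows $\frakg$ with a Novikov Lie algebra with limit-weight structure $u\no' v:=\lim_{n\to\infty}\MfrakH_{\frac{1}{n}}\big(\re_u(\MfrakL_{\frac{1}{n}}D(v))\big)$ for $\re$.

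The remaining point is to identify $\no:=\No_{*e}$ with $\no'$, i.e.\ that taking tangent maps intertwines the construction of $\frakD$ from $\No$ with the construction of $\no'$ from $D$. Using the displayed identity $a\No b=\lim_{n\to\infty}\mfrakH_{\frac{1}{n}}\RE_a(\mfrakL_{\frac{1}{n}}\frakD(b))$ and the unitality of $\frakD,\mfrakL_{\frac{1}{n}},\mfrakH_{\frac{1}{n}},\RE$ (which forces $F(t,s):=\lim_{n\to\infty}\mfrakH_{\frac{1}{n}}\RE_{e^{tu}}(\mfrakL_{\frac{1}{n}}\frakD(e^{sv}))$ to satisfy $F(t,0)=e$), one computes
\[
u\no v=\No_{*e}(u,v)=\frac{\dd^2}{\dd t\dd s}\Big|_{t,s=0}F(t,s)=\lim_{n\to\infty}\MfrakH_{\frac{1}{n}}\Big(\re_u\big(\MfrakL_{\frac{1}{n}}D(v)\big)\Big)=u\no' v ,
\]
where the interchange of $\frac{\dd^2}{\dd t\dd s}\big|_{0}$ with $\lim_{n\to\infty}$ and the chain-rule bookkeeping through $\mfrakH_{\frac{1}{n}}$, $\RE_{e^{tu}}$, $\mfrakL_{\frac{1}{n}}$, $\frakD$ are carried out exactly as in the proofs of Proposition~\mref{prop:leiswl} and Theorem~\mref{thm:tgrelim}, with $\re_u=\frac{\dd}{\dd t}\big|_{t=0}(\RE_{e^{tu}})_{*e}$. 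This yields that $(\frakg,[\cdot,\cdot],\no)$ is a Novikov Lie algebra with limit-weight $\lim_{n\to\infty}(\MfrakL_{\frac{1}{n}},\MfrakH_{\frac{1}{n}})$ for $\re$. For the final assertion, if $(G,\No)$ is a Novikov group then $\mfrakL_{\frac{1}{n}}=\pown^{-1}$, $\mfrakH_{\frac{1}{n}}=\pown$ with $\pown$ as in~\meqref{eq:powern} and $\RE$ the adjoint action, hence $\MfrakL_{\frac{1}{n}}=\frac{1}{n}\id_\frakg$, $\MfrakH_{\frac{1}{n}}=n\,\id_\frakg$ and $\re$ is the adjoint action; Proposition~\mref{pp:dervex} (equivalently, the ``in particular'' clause of Proposition~\mref{pp:dernov}) then upgrades $(\frakg,\no)$ to a genuine Novikov Lie algebra.

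The main obstacle is this identification $\No_{*e}=\no'$: one must justify interchanging the second-order tangent operation with the limit $\lim_{n\to\infty}$ and check that the chain rule telescopes through the four maps $\mfrakH_{\frac{1}{n}},\RE_{e^{tu}},\mfrakL_{\frac{1}{n}},\frakD$ to produce exactly $\lim_{n\to\infty}\MfrakH_{\frac{1}{n}}(\re_u(\MfrakL_{\frac{1}{n}}D(v)))$. This is of the same analytic character as the interchanges already performed in the proofs of Proposition~\mref{prop:leiswl}, Theorem~\mref{thm:tgrelim} and Theorem~\mref{thm:dgpdla}, so once those are invoked the verification is routine; but it is exactly where smoothness, unitality of the pairs, and the (action-)\complim hypotheses are genuinely used.
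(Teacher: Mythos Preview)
Your proof is correct but follows a genuinely different route from the paper's. The paper differentiates the two Novikov-group axioms \meqref{eq:alesbc} and the second identity directly: it expands $u\no([v,w])$ and $([u,v])\no w$ as triple mixed partials of $\exp(tu)\No(\exp(sv)\exp(rw))$ etc., applies the group-level identities inside, and after a lengthy chain of limit/derivative manipulations arrives at the Lie-algebra axioms. Your approach instead factors through the Gelfand--Dorfman correspondence: you extract $\frakD(a)=e\No a$ from $\No$ via Proposition~\mref{pp:novidf}\meqref{it:novidfb}, pass to the tangent $D=\frakD_{*e}$ via Theorem~\mref{thm:dgpdla}, obtain a Novikov Lie algebra $\no'$ from $D$ via Proposition~\mref{pp:dernov}, and then identify $\no'=\No_{*e}$ by one second-order differentiation of the relation $a\No b=\lim_{n}\mfrakH_{\frac{1}{n}}\RE_a(\mfrakL_{\frac{1}{n}}\frakD(b))$. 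What your route buys is economy and conceptual clarity: the two Novikov Lie-algebra identities are obtained for free from Proposition~\mref{pp:dernov} rather than verified termwise, and the only new analytic step is the single identification $\no=\no'$, which is of exactly the same type as (and strictly simpler than) the interchanges already carried out in Proposition~\mref{prop:leiswl} and Theorem~\mref{thm:dgpdla}. The paper's direct computation, by contrast, makes visible precisely how each group axiom differentiates to each Lie-algebra axiom, at the cost of a considerably longer calculation.
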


\begin{proof}
    For any $u, v\in \frakg$,  we have
    \begin{align*}
        0=\frac{d}{dt}\Big|_{t=0}\exp(0)=\frac{d}{dt}\Big|_{t,s=0}\exp(tu)\No\exp(sv).
    \end{align*}
    Then for $u, v, w\in \frakg$, we obtain
{\small
\begin{align*}
        &\ u\no ([v,w])\\
        =&\ \left.\dfrac{\dd}{\dd t}\right|_{t=0} \left.\dfrac{\dd}{\dd s}\right|_{s=0}
			\left.\dfrac{\dd}{\dd r}\right|_{r=0} \bigg(\exp(tu)\No \Big(\exp(sv)\exp(rw)\Big)-\exp(tu)\No \Big(\exp(rw)\exp(sv)\Big)\bigg)\\
   =&\ \left.\dfrac{\dd}{\dd t}\right|_{t=0} \left.\dfrac{\dd}{\dd s}\right|_{s=0}
			\left.\dfrac{\dd}{\dd r}\right|_{r=0} \Bigg(\Big(\exp(tu)\No \exp(sv)\Big)\cdot_{(\mfrakL_{\frac{1}{\infty}},\mfrakH_{\frac{1}{\infty}})}\bigg(\Big(\exp(tu)\exp(sv)\Big)\No \exp(rw)\bigg)\Bigg)\\
   &- \left.\dfrac{\dd}{\dd t}\right|_{t=0} \left.\dfrac{\dd}{\dd s}\right|_{s=0}
			\left.\dfrac{\dd}{\dd r}\right|_{r=0} \Bigg(\Big(\exp(tu)\No \exp(rw)\Big)\cdot_{(\mfrakL_{\frac{1}{\infty}},\mfrakH_{\frac{1}{\infty}})}\bigg(\Big(\exp(tu)\exp(rw)\Big)\No \exp(sv)\bigg)\Bigg)\\
   =&\ \lim_{n\to\infty}\Bigg(\left.\dfrac{\dd}{\dd t}\right|_{t=0} \left.\dfrac{\dd}{\dd s}\right|_{s=0}
			\left.\dfrac{\dd}{\dd r}\right|_{r=0}\mfrakH_{\frac{1}{n}}\bigg(\mfrakL_{\frac{1}{n}}\Big(\exp(tu)\No\exp(sv)\Big)\mfrakL_{\frac{1}{n}}\Big(\exp(rw)\Big)\bigg)\\
   &+\left.\dfrac{\dd}{\dd t}\right|_{t=0} \left.\dfrac{\dd}{\dd s}\right|_{s=0}
			\left.\dfrac{\dd}{\dd r}\right|_{r=0}\mfrakH_{\frac{1}{n}}\bigg(\mfrakL_{\frac{1}{n}}\Big(\exp(sv)\Big)\mfrakL_{\frac{1}{n}}\Big(\exp(tu)\No\exp(rw)\Big)\bigg)\\
   &+ \left.\dfrac{\dd}{\dd t}\right|_{t=0} \left.\dfrac{\dd}{\dd s}\right|_{s=0}
			\left.\dfrac{\dd}{\dd r}\right|_{r=0}\bigg(\Big(\exp(tu)\exp(sv)\Big)\No\exp(rw)\bigg)\\
   &-\left.\dfrac{\dd}{\dd t}\right|_{t=0} \left.\dfrac{\dd}{\dd s}\right|_{s=0}
			\left.\dfrac{\dd}{\dd r}\right|_{r=0}\mfrakH_{\frac{1}{n}}\bigg(\mfrakL_{\frac{1}{n}}\Big(\exp(tu)\No\exp(rw)\Big)\mfrakL_{\frac{1}{n}}\Big(\exp(sv)\Big)\bigg)\\
   &+ \left.\dfrac{\dd}{\dd t}\right|_{t=0} \left.\dfrac{\dd}{\dd s}\right|_{s=0}
			\left.\dfrac{\dd}{\dd r}\right|_{r=0}\mfrakH_{\frac{1}{n}}\bigg(\mfrakL_{\frac{1}{n}}\Big(\exp(rw)\Big)\mfrakL_{\frac{1}{n}}\Big(\exp(tu)\No\exp(sv)\Big)\bigg)\\
   &-\left.\dfrac{\dd}{\dd t}\right|_{t=0} \left.\dfrac{\dd}{\dd s}\right|_{s=0}
			\left.\dfrac{\dd}{\dd r}\right|_{r=0}\bigg(\Big(\exp(tu)\exp(rw)\Big)\No\exp(sv)\bigg)\Bigg)\\
   =&\ \lim_{n\to\infty}\Bigg(\left.\dfrac{\dd}{\dd t}\right|_{t=0} \left.\dfrac{\dd}{\dd s}\right|_{s=0}
			\left.\dfrac{\dd}{\dd r}\right|_{r=0}\bigg(\mfrakH_{\frac{1}{n}}\Big(\RE_{\exp(tu)}\mfrakH_{\frac{1}{n}}(\exp(sv)\No\exp(rw))\Big)\\
&\hspace{4cm}-\Big(\RE_{\exp(tu)}\mfrakH_{\frac{1}{n}}(\exp(rw)\No\exp(sv))\Big)\bigg)\Bigg)\\
   &\ \hspace{5cm}\text{(by limit-abelian property and Eq.~(\mref{eq:rets}))}\\
   =&\  \lim\limits_{n\to\infty}\MfrakH_{\frac{1}{n}}\bigg(\re_u\Big(\MfrakL_{\frac{1}{n}}(v\no w)\Big)\bigg)-\lim\limits_{n\to\infty}\MfrakH_{\frac{1}{n}}\bigg(\re_u\Big(\MfrakL_{\frac{1}{n}}(w\no v)\Big)\bigg),
    \end{align*}
}
and
{\small
\begin{align*}
&\ ([u,v])\no w\\
=&\ \left.\dfrac{\dd}{\dd t}\right|_{t=0} \left.\dfrac{\dd}{\dd s}\right|_{s=0}
\left.\dfrac{\dd}{\dd r}\right|_{r=0} \bigg(\Big(\exp(tu)\exp(sv)\Big)\No\exp(rw) -\Big(\exp(sv)\exp(tu)\Big)\No\exp(rw)\bigg)\\
=&\ \left.\dfrac{\dd}{\dd t}\right|_{t=0} \left.\dfrac{\dd}{\dd s}\right|_{s=0}
\left.\dfrac{\dd}{\dd r}\right|_{r=0} \lim\limits_{n\to\infty}\mfrakH_{\frac{1}{n}}\RE_{\exp(tu)}\bigg(\mfrakL_{\frac{1}{n}}\Big(\exp(sv)\No \exp(rw)\Big)\bigg)\\
-&\left.\dfrac{\dd}{\dd t}\right|_{t=0} \left.\dfrac{\dd}{\dd s}\right|_{s=0}
\left.\dfrac{\dd}{\dd r}\right|_{r=0}\lim\limits_{n\to\infty}\mfrakH_{\frac{1}{n}}\RE_{\exp(sv)}\bigg(\mfrakL_{\frac{1}{n}}\Big(\exp(tu)\No \exp(rw)\Big)\bigg)\\
=&\lim\limits_{n\to\infty}\MfrakH_{\frac{1}{n}}\bigg(\re_u\Big(\MfrakL_{\frac{1}{n}}(v\no w)\Big)\bigg)-\lim\limits_{n\to\infty}\MfrakH_{\frac{1}{n}}\bigg(\re_v\Big(\MfrakL_{\frac{1}{n}}\Big((u\no w)\Big)\bigg).
\end{align*}
}
In particular, if $(G,\No)$ is a Novikov group, then we have $\MfrakL_{\frac{1}{n}}=\frac{1}{n}\,\id_\frakg$, $\MfrakH_{\frac{1}{n}}=n\,\id_\frakg$, and $\re$ is the adjoint action. Thus $(\frakg,\no)$ is a Novikov Lie algebra.
\end{proof}

We end the paper by revisiting Proposition~\mref{noviex} and giving an example of Novikov Lie algebras. Applying Proposition~\mref{pp:dernov} and Theorem~\mref{thm:ngpnla} we obtain

\begin{prop}
Let $\frakg=\{ (a_{ij}) \in \mathfrak{gl}_3(\RR)\mid a_{ij} = 0\, \text{ for }\,i\geqslant j\}$ be the nilpotent Lie algebra with a derivation $D$. Then the multiplication $\no$ on $\frakg$ given by
$$ u \no v:=[u,D(v)], \quad u, v\in \frakg,$$
defines a Novikov Lie algebra on $\frakg$. Furthermore, this Novikov Lie algebra is the tangent space of the Novikov group $(G,\No)$ defined in Proposition~\mref{noviex}.
\mlabel{noviex2}
\end{prop}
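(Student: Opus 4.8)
The plan is to obtain both assertions by specializing results already established in the paper. For the first assertion, I would apply Proposition~\ref{pp:dernov} with $\re$ taken to be the adjoint action $\mathrm{ad}\colon\frakg\to\Der(\frakg)$ and with $\MfrakL_{\frac1n}:=\frac1n\,\id_\frakg$, $\MfrakH_{\frac1n}:=n\,\id_\frakg$. Since $\frakg$ is finite-dimensional and every map in sight is linear, the synchronization and action-synchronized hypotheses are automatic, and $\frakg$ is $\lim_{n\to\infty}(\MfrakL_{\frac1n},\MfrakH_{\frac1n})$-abelian because $\lim_{n\to\infty} n\bigl[\tfrac1n u,\tfrac1n v\bigr]=\lim_{n\to\infty}\tfrac1n[u,v]=0$. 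By Remark~\ref{rk:redf}, for these maps and the adjoint action a relative differential operator with limit-weight $\lim_{n\to\infty}(\MfrakL_{\frac1n},\MfrakH_{\frac1n})$ is precisely a derivation of weight zero, so the given $D$ qualifies, and the product induced in Proposition~\ref{pp:dernov} is
\[
u\no v=\lim_{n\to\infty}\MfrakH_{\frac1n}\bigl(\re_u(\MfrakL_{\frac1n}D(v))\bigr)=\lim_{n\to\infty} n\bigl[u,\tfrac1n D(v)\bigr]=[u,D(v)].
\]
The ``in particular'' clause of Proposition~\ref{pp:dernov} then gives that $(\frakg,[\cdot,\cdot],\no)$ with $u\no v=[u,D(v)]$ is a Novikov Lie algebra.

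For the second assertion, I would invoke Proposition~\ref{noviex}, which asserts that $(G,\No)$ is a Novikov group with limit-weight $\lim_{n\to\infty}(\pown^{-1},\pown)$ for the conjugation action $\RE$, where $G=\exp(\frakg)$ is the simply connected nilpotent Lie group with Lie algebra $\frakg$; its hypotheses (smooth unital pairs, smooth action) are routine to verify, since $\pown\colon a\mapsto a^n$ and $\pown^{-1}$ are polynomial, hence smooth, on $G$. Observe that $\RE_{*e}=\mathrm{ad}$ while $(\pown^{-1})_{*e}=\frac1n\,\id_\frakg$ and $(\pown)_{*e}=n\,\id_\frakg$. Theorem~\ref{thm:ngpnla} then applies and shows that $\frakg=T_eG$, with $\no':=\No_{*e}$ and the adjoint action, is a Novikov Lie algebra with limit-weight given by these tangent maps, and hence --- since they reduce to $\frac1n\,\id_\frakg$ and $n\,\id_\frakg$ --- an ordinary Novikov Lie algebra, which is by construction the tangent structure of $(G,\No)$. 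It therefore remains only to identify $\no'$ with $\no$.

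This identification is where the concrete structure of $\frakg$ enters, and it is the step I expect to need the most care. Because $\frakg$ is $2$-step nilpotent --- $[\frakg,[\frakg,\frakg]]=0$, so $\mathrm{ad}_u^2=0$ on $\frakg$ and any bracket with an element of $[\frakg,\frakg]$ vanishes --- both the Baker--Campbell--Hausdorff series and the expansion $\mathrm{Ad}_{\exp(u)}=\id_\frakg+\mathrm{ad}_u$ truncate after the linear term. Substituting this into the closed formula for $\No$ in Proposition~\ref{noviex} (equivalently, into the Gelfand--Dorfman formula~\eqref{eq:ggles} applied to the relative differential operator $\frakD\colon G\to G,\ \exp(u)\mapsto\exp(D(u)+\tfrac12[u,D(u)])$, whose tangent map at $e$ is $\frakD_{*e}=D$), expanding $\exp(tu)\No\exp(sv)$, and using that $\exp$ has differential $\id_\frakg$ at $0$ --- so that, since the coefficient of $t$ in the argument of $\exp$ vanishes at $s=0$, the higher-order terms of $\exp$ contribute nothing to the mixed second derivative at the origin --- one computes
\[
u\no' v=\left.\frac{\dd^2}{\dd t\,\dd s}\right|_{t,s=0}\exp(tu)\No\exp(sv)=[u,D(v)]=u\no v.
\]
Hence $(\frakg,[\cdot,\cdot],\no)$ is exactly the tangent Novikov Lie algebra of $(G,\No)$, which is the assertion of the proposition. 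The one genuinely delicate point in this plan is the last computation: tracking the $2$-step nilpotency precisely enough that $\No_{*e}$ comes out equal to $[u,D(v)]$ on the nose; everything else is bookkeeping that is immediate in finite dimensions.
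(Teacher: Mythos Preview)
Your proposal is correct and follows essentially the same route as the paper: the paper's proof consists of the single sentence ``Applying Proposition~\ref{pp:dernov} and Theorem~\ref{thm:ngpnla} we obtain,'' and you have identified precisely these two ingredients and filled in the details. Your extra step of explicitly verifying $\No_{*e}=[u,D(v)]$ via the $2$-step nilpotency is a welcome check that the paper leaves implicit.
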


\noindent
{\bf Acknowledgments.} This work is supported by the National Natural Science Foundation of China (12071191, 12101316) and Innovative Fundamental Research Group Project of Gansu Province (23JRRA684). The authors thank Chengming Bai and Yong Liu for helpful discussions, and Yifei Li for valuable programming.

\noindent
{\bf Declaration of interests.} The authors have no conflicts of interest to disclose.

\noindent
{\bf Data availability.} Data sharing is not applicable as no new data were created or analyzed.

\end{document}